\date{}
\newtheorem{theorem}{Theorem}[section]
\newtheorem{lemma}[theorem]{Lemma}
\newtheorem{remark}[theorem]{Remark}
\newtheorem{definition}[theorem]{Definition}
\numberwithin{equation}{section}
\begin{document}

\title{\Large\bf Multilinear Operators on Weighted Amalgam-Type Spaces
\footnotetext{\hspace{-0.35cm} {\it 2010 Mathematics Subject Classification}. {42B20,42B25, 42B35}
\endgraf{\it Key words and phrases.} Amalgam Spaces, $A_{\vec{p}}$ weight, Multilinear Operators, multilinear fractional type integral operator, $A_{\vec{p},q}$ weight, Amalgam-Campanato space, BMO.
\endgraf Songbai Wang is supported by Young Foundation of Education Department of Hubei Province (No.Q20162504).
}}
\author{Songbai Wang, Peng Li}
\maketitle
\begin{abstract}
In this paper, we prove that if a multilinear operator $\mathcal{T}$ and its multilinear commutator $\mathcal{T}_{\Sigma\vec{b}}$ and  iterated commutator $\mathcal{T}_{\Pi\vec{b}}$ for $\vec{b}\in(\mathbb{R}^n)^m$ are bounded on product weighted Lebesgue space, then $\mathcal{T}$, $\mathcal{T}_{\Sigma\vec{b}}$ and $\mathcal{T}_{\Pi\vec{b}}$ are also bounded on product weighted Amalgam space. As its applications, we show that multilinear Littlewood-Paley functions and multilinear Marcinkiewicz integral functions with kernels of convolution type and non-convolution type, and their multilinear commutators and iterated commutators are bounded on product weighted Amalgam space. We also consider multilinear fractional type integral operators and their commutators' behaviors  on weighted amalgam space. In order to deal with the endpoint case, we introduce the amalgam-Campanato space and show that fractional integral integral operator are bounded operators from product amalgam space to amalgam-Campanato space. What should point out is that even if in the linear case, our results for fractional integral operator are also new.
\end{abstract}


\section{Introduction}\label{s1}
\hskip\parindent

If $1\leq p, q\leq\infty,$ a function $f \in L^q_{loc}(\mathbb{R}^n)$ is said to be in the amalgam spaces $(L^q,L^p)(\mathbb{R}^n)$ of
$L^q(\mathbb{R}^n)$ and $L^p(\mathbb{R}^n)$ if $\|f(\cdot)\chi_{B(y,1)}(\cdot)\|_q$ belongs to $L^p(\mathbb{R}^n),$ where $B(y,r)$ denotes the open
ball with center $y$ and radius $r$ and $\chi_{B(y,r)}$ is the characteristic function of the ball $B(y,r)$,
here $\|\cdot\|_q$ is the usual Lebesgue norm in $L^q(\mathbb{R}^n).$
$$\|f\|_{q,p}=\bigg(\int_{\mathbb{R}^n}\|f(\cdot)\chi_{B(y,1)}(\cdot)\|_q^pdy\bigg)^{1/p}$$
is a norm on $(L^q,L^p)(\mathbb{R}^n)$ under which it is a Banach space with the usual modification
when $p=\infty.$ These amalgam spaces were first introduced by Winer \cite{W1926} in 1926 and its systematic study
goes back to the work of Holland \cite{H1975}.

In the study of the continuity of the fractional maximal operator of Hardy-Littlewood and of the Fourier transformation on $\mathbb{R}^n,$ Fofana generized the above amalgam space to $(L^q,L^p)^\alpha$, $1\leq q \leq \alpha \leq p\leq\infty$ in \cite{Fo1988,Fo1989}. The authors generalized the amalgam space to the spaces of homogeneous type \cite{FFK2003}and to the setting of homogeneous groups \cite{FFK2010} later on. Recently,  Feuto \cite{F2014} studied the weighted version of these amalgam spaces. In this paper, we make a little modification of Feuto's amalgam space.

\begin{definition}
Let $\omega_1,\omega_2$ be weights and $0< q\leq \alpha\leq p\leq\infty.$ We define the weighted Amalgam space $(L^q(\omega_1,\omega_2),L^p)^\alpha:=(L^q(\omega_1,\omega_2),L^p)^\alpha(\mathbb R^n)$ as the space of all measurable functions $f$ satisfying $\|f\|_{(L^q(\omega_1,\omega_2),L^p)^\alpha}<\infty$, where
$$
\|f\|_{(L^q(\omega_1,\omega_2),L^p)^\alpha}:=\underset{r>0}{\sup}\|f\|_{(L^q(\omega_1,\omega_2),L^p)^\alpha}
$$
with
\begin{align*}
\|f\|_{(L^q(\omega_1,\omega_2),L^p)^{\alpha}}:=
\begin{cases}
\bigg(\int_{\mathbb{R}^n}
\big(\omega_1(B(y,r))^{\frac{1}{\alpha}-\frac{1}{q}-\frac{1}{p}}\|f\chi_{B(y,r)}\|_{L^q(\omega_2)}\big)^pdy\bigg)^{\frac{1}{p}},\quad &p<\infty,\\
\mathop{\rm{ess}\sup}\limits_{y\in\mathbb{R}^n}\omega_1(B(y,r))^{\frac{1}{\alpha}-\frac{1}{q}}\|f\chi_{B(y,r)}\|_{L^q(\omega_2)}, \quad &p=\infty.
\end{cases}
\end{align*}
We also put out that for $r>0$
\begin{align*}
\|f\|_{(L^{q,\infty}(\omega_1,\omega_2),L^p)^{\alpha}}:=
\begin{cases}
\bigg(\int_{\mathbb R^n}\big(\omega_1(B(y,r))^{\frac1\alpha-\frac1q-\frac1p}\|f\chi_{B(y,r)}\|_{L^{q,\infty}(\omega_2)}\big)^pdy\bigg)^{\frac{1}{p}},
\quad &p<\infty,\\
\mathop{\rm{ess}\sup}\limits_{y\in\mathbb{R}^n}\omega_1(B(y,r))^{\frac{1}{\alpha}-\frac{1}{q}}\|f\chi_{B(y,r)}\|_{L^{q,\infty}(\omega_2)}, \quad &p=\infty,
\end{cases}
\end{align*}
where $L^{q,\infty}(\omega)$ denotes the weak weighted Lebesgue space.
\end{definition}

\begin{remark}
When $\omega_1=\omega_2=\omega,$ we denote $(L^q(\omega),L^p)^\alpha$ for short, which was introduced by Feuto \cite{F2014}. For $p=\infty,$ the space goes back to the weighted Morrey space defined by Komori and Shirai \cite{KS2009}. If $\omega\in A_\infty,$ then for $1\leq q_1\leq q_2\leq \alpha\leq p\leq \infty,$ we have $(L^{q_2}(\omega),L^p)^\alpha\subset(L^{q_1}(\omega),L^p)^\alpha$, and for $1\leq q\leq\alpha\leq p_1\leq p_2\leq\infty,$ we have $(L^q(\omega),L^{p_1})^\alpha\subset(L^q(\omega),L^{p_2})^\alpha.$
\end{remark}

In \cite{F2014},  Feuto proved that operators which are bounded on weighted
Lebesgue spaces and satisfy some local pointwise control, are bounded on weighted Amalgam spaces. His results included Calder\'on-Zygmund operators, Marcinkiewicz operators, maximal operators associated to Bochner-Riesz operators and their commutators. And his results also included Littlewood-Paley operators with rough kernels, whose control in this spaces was given by Wei and Tao in \cite{WT2013}. Readers can refer to \cite{CMP1999,DFS2013} for more research about the of operators on Amalgam space. In this paper, we will generalize Feuto's and Wei and Tao's results to multilinear type.

The theory of multilinear analysis related to the Calder\'on-Zygmund program originated in the work
of Coifman and Meyer \cite{CM11975,CM21978,CM31990}. Its study has been attracting a lot of attention in the last few decades.
A series of papers on this topic enriches this program, for example Christ and Journ\'e \cite{CJ1987},
Kenig and Stein \cite{KS1999}, and Grafakos and Torres \cite{GT12002,GT22002}. The authors \cite{LOPTT2009} introduced so-called multiple weights to develop the weighted multilnear Calder\'on-Zygmund theory and resolved some problems opened up in \cite{GT22002}.  Recently, many authors poured their much enthusiasm and devotion to the multilinear Littlewood-Paley theory for the unweighted case and weighted case, see \cite{CHO2014,CXY2015,GLMY2014,GO2012,H2012,XPY2015,XY2015} and so on.

In this paper, we give out a universal frame, which including multilinear Littlewood-Paley functions, multilinear Marcinkiewicz integral functions, and the multilinear Calder\'on-Zygmund operators, and multilinear singular integrals with nonsmooth kernels. We estimate the boundedness of these operator and their multilinear commutators and iterated commutators with the symbol $\vec b\in BMO^m$ on weighted amalgam spaces.

We also consider multilinear fractional type integral operators and their commutators' behaviors  on weighted amalgam space. In order to deal with the endpoint case, we introduce the amalgam-Campanato space and show that fractional integral integral operator are bounded operators from product amalgam space to amalgam-Campanato space.

This paper is organized as follows. In section 2, we prove that if the multilinear integral operator or sub-multilinear integral operator satisfy a point estimate (see following \eqref{AmalgamCondition} ), then they and their multilinear commutators and iterated commutators with the symbol $\vec b\in BMO^m$ are bounded on product weighted amalgam space. In section 3, we apply our main theorems in section 2 to some specific operators, including multilinear Littlewood-Paley functions with convolution type kernel and non-convolution type kernel, multilinear Marcinkiewicz integral functions with convolution type kernel and non-convolution type kernel and the multilinear Calder\'on-Zygmund operators and multilinear singular integrals with nonsmooth kernels. We will not state the corresponding results for the last two classes of multilinear operators since they satisfy our condition presented in some papers essentially, see \cite{AD2010,CW2012,DGGLY2009,DGY2010,GLY2011,GT12002,GT22002,LOPTT2009,PPTT2014} and so on. In the last section, we show that multilinear fractional type integral operator and its multilinear commutator and iterated commutator are bounded operators on product weighted amalgam space. Some endpoint estimate for multilinear fractional integral operator are also obtained in the non-weighed case, that is, we prove that multilinear fractional integral operators are bounded from product amalgam spaces to amalgam-Campanato space. It is deserved to be pointed out that even if in linear case, our results are also new.

Throughout the article, the constant $C$ always denotes a positive constant independent of the main variables, which may vary from line to line. For a ball $B\subset \mathbb{R}^n$ and $\lambda>0$, we use $\lambda B$ to denote the ball concentric with $B$ whose radius is $\lambda$~times of~$B's$. And we use $B^c$ to denote the supplementary of $B$ on $\mathbb{R}^n$, that is to say, $B^c=\mathbb{R}^n\backslash B$. As usual, $|E|$ denotes the Lebesgue measure of a measurable set $E$ and $\chi_E$  denotes the characteristic function of $E$. For $p>1$, we denote by $p'=p/(p-1)$ the conjugate exponent of $p$.

\section{Multilinear operators and their commutators on weighted amalgam space \label{s2}}
\hskip\parindent

We first recall the definitions of Muckenhoupt weights $A_p$, multiple weights $A_{\vec P}$ and \textrm{BMO} space.
\begin{definition}\label{weight}\cite{M1972}
Let $1\leq p<\infty$. Suppose that $\omega$ is a nonnegative function on $\mathbb{R}^n$. We say that $\omega\in A_{p}~(1<p<\infty)$ if it satisfies
\begin{align*}
\sup_B\bigg(\frac{1}{|B|}\int_B\omega(x)dx\bigg)^{1/p}\bigg(\frac{1}{|B|}\int_B\omega(x)^{1-p'}dx\bigg)^{1/{p'}}<\infty.
\end{align*}
A weight w belongs to the class $A_1$ if there exists a constant $C$ such that
$$\frac{1}{|B|}\int_B\omega(x)dx\leq C\inf_{y\in B}\omega(y).$$
\end{definition}
\begin{definition}\label{Multiplierweight}\cite{LOPTT2009}
Let $1\leq p_1,\ldots,p_m<\infty$ with $1/p=\sum_{j=1}^m1/{p_j}$. Suppose that $\vec{\omega}=(\omega_1,\ldots,\omega_m)$ and
each $¦Ø_j~(i = 1,\ldots,m)$ is a nonnegative function on $\mathbb{R}^n$. We say that $\vec{\omega}\in A_{\vec{p}}$ if it satisfies
$$
\sup_B\bigg(\frac{1}{|B|}\int_B\nu_{\vec{\omega}}(x)dx\bigg)^{1/p}\prod_{j=1}^m\bigg(\frac{1}{|B|}\int_B\omega_j(x)^{1-p_j'}dx\bigg)^{1/{p_j'}}<\infty
$$
where $\nu_{\vec{\omega}}=\prod_{j=1}^m\omega_j^{p/{p_j}}$. If $p_j=1$, $\bigg(\frac{1}{|B|}\int_B\omega_j^{1-p_j'}\bigg)^{1/{p_j'}}$ is understood as $(\inf_B\omega_j)^{-1}$.
\end{definition}
\begin{definition}\label{BMO}\cite{JN1961}
A local integral function $f$ is said to belong to $BMO^q(\mathbb{R}^n),q\geq1$ if
$$
\|f\|_{BMO^q}:=\sup_B\bigg(\frac{1}{|B|}\int_B|f(x)-f_B|^qdx\bigg)^\frac1q<\infty,
$$
where $f_B=\frac{1}{|B|}\int_Bf(x)dx$ denotes the mean value of $f$ over ball $B$.
\end{definition}
\begin{remark}
By John-Nirenberg's inequality, we have $\|f\|_{BMO^1}=\|f\|_{BMO^q}$ for all $q>1,$ so we denote by $BMO$ simple. From the definition, it could be seen that $|f_{2^kB}-f_B|\leq C k\|f\|_{BMO}.$
\end{remark}

And we also give out some auxiliary lemmas which will be used in following proof.
\begin{lemma}\label{ReHolder}\cite{GR1985}
If $\omega\in A_q$, for any  measurable subset $E$ of $B$, then there exists a constant $C$ and $\varepsilon>0$ such that
$$
\frac{\omega(E)}{\omega(B)}\leq C\bigg(\frac{|E|}{|B|}\bigg)^{\frac{\varepsilon}{1+\varepsilon}}.
$$
\end{lemma}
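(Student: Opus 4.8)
The plan is to deduce this estimate from the reverse Hölder inequality enjoyed by Muckenhoupt weights. First I would recall that any $\omega\in A_q$ belongs to $A_\infty=\bigcup_{p\geq 1}A_p$, and hence satisfies a reverse Hölder inequality: there exist $s>1$ and a constant $C$, depending only on $n$, $q$ and the $A_q$ constant of $\omega$, such that for every ball $B$,
$$
\left(\frac{1}{|B|}\int_B\omega(x)^s\,dx\right)^{1/s}\leq \frac{C}{|B|}\int_B\omega(x)\,dx.
$$
This self-improving property is classical and is exactly one of the characterizations of $A_\infty$ recorded in \cite{GR1985}; its proof rests on a Calder\'on--Zygmund decomposition of $\omega$ on $B$, and this is really the only nontrivial input.

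Next, for a measurable subset $E\subset B$, I would apply H\"older's inequality with exponents $s$ and $s'$ to $\omega(E)=\int_B\omega\,\chi_E$:
$$
\omega(E)\leq\left(\int_B\omega^s\right)^{1/s}|E|^{1/s'}=|B|^{1/s}\left(\frac{1}{|B|}\int_B\omega^s\right)^{1/s}|E|^{1-1/s}.
$$
Invoking the reverse H\"older inequality above to bound the averaged $L^s$ norm of $\omega$ by its average, and then dividing by $\omega(B)=|B|\cdot\frac{1}{|B|}\int_B\omega$, the averages cancel and the powers of $|B|$ combine to give
$$
\frac{\omega(E)}{\omega(B)}\leq C\,\frac{|B|^{1/s}|E|^{1-1/s}}{|B|}=C\left(\frac{|E|}{|B|}\right)^{1-1/s}.
$$

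Finally I would choose $\varepsilon>0$ so that $1-1/s=\varepsilon/(1+\varepsilon)$, that is $\varepsilon=s-1>0$, which yields precisely the stated inequality. The argument is short; the only genuine obstacle, namely the passage from the $A_q$ condition to a reverse H\"older inequality with some exponent $s>1$, is quoted directly from \cite{GR1985}.
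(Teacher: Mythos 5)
Your argument is correct and is precisely the classical derivation: the paper does not prove this lemma itself but simply cites \cite{GR1985}, where the stated estimate is obtained exactly as you do, from the reverse H\"older inequality for $A_q\subset A_\infty$ weights followed by H\"older's inequality on $\omega(E)=\int_B\omega\chi_E$ and the choice $\varepsilon=s-1$. Nothing further is needed.
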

\begin{lemma}\label{doublemeasure}\cite{GR1985}
If $\omega\in A_q$, then the measure $\omega(x)dx$ is a doubling measure, that is to say, for any $\lambda>1$ and all balls $B$, there exists a constant $C>0$ such that
$$
\omega(\lambda B)\leq C \lambda^{nq}\omega(B).
$$
\end{lemma}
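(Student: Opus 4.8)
The plan is to deduce the doubling property directly from the $A_q$ condition by a Hölder-inequality comparison between Lebesgue measure and the weighted measure $\omega\,dx$. The heart of the matter is the elementary inequality that, for $\omega\in A_q$ and any measurable $E\subset B$,
$$\Big(\frac{|E|}{|B|}\Big)^{q}\le [\omega]_{A_q}\,\frac{\omega(E)}{\omega(B)},$$
where $[\omega]_{A_q}$ denotes the $A_q$ constant of $\omega$, i.e. the supremum appearing in Definition \ref{weight}. Granting this, the lemma is immediate: apply it with the ambient ball taken to be $\lambda B$ and with $E=B\subset\lambda B$, so that
$$\Big(\frac{|B|}{|\lambda B|}\Big)^{q}=\lambda^{-nq}\le [\omega]_{A_q}\,\frac{\omega(B)}{\omega(\lambda B)},$$
which rearranges to $\omega(\lambda B)\le [\omega]_{A_q}\,\lambda^{nq}\,\omega(B)$, so that $C=[\omega]_{A_q}$ works.

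To establish the displayed inequality I would first write, using Hölder's inequality with exponents $q$ and $q'$ together with the identity $-q'/q=1-q'$,
$$|E|=\int_E\omega^{1/q}\,\omega^{-1/q}\,dx\le\Big(\int_E\omega\,dx\Big)^{1/q}\Big(\int_E\omega^{1-q'}\,dx\Big)^{1/q'}\le\omega(E)^{1/q}\Big(\int_B\omega^{1-q'}\,dx\Big)^{1/q'}.$$
Next, from the $A_q$ condition $\big(\tfrac1{|B|}\int_B\omega\big)\big(\tfrac1{|B|}\int_B\omega^{1-q'}\big)^{q-1}\le[\omega]_{A_q}$ and the relation $1/q'=(q-1)/q$, one gets
$$\Big(\int_B\omega^{1-q'}\,dx\Big)^{1/q'}\le[\omega]_{A_q}^{1/q}\,\frac{|B|}{\omega(B)^{1/q}}.$$
Combining the last two displays yields $|E|\le[\omega]_{A_q}^{1/q}\,|B|\,\big(\omega(E)/\omega(B)\big)^{1/q}$, which is exactly the claimed inequality after raising to the power $q$ and dividing by $|B|^q$.

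For the endpoint $q=1$ the argument is even shorter and should be handled separately: since $\omega\in A_1$ gives $\tfrac1{|\lambda B|}\int_{\lambda B}\omega\le C\inf_{\lambda B}\omega\le C\inf_{B}\omega\le\tfrac{C}{|B|}\int_B\omega$, multiplying through by $|\lambda B|=\lambda^n|B|$ produces $\omega(\lambda B)\le C\lambda^n\omega(B)$, consistent with the stated bound. I do not expect any genuine obstacle here; the only points requiring care are the bookkeeping with the dual exponents $q'$ and $1-q'$, and the observation that the Hölder step is applied on $E$ while the resulting integral of $\omega^{1-q'}$ must be enlarged to $B$ before invoking the $A_q$ inequality.
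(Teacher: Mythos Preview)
The paper does not supply its own proof of this lemma; it is stated with a citation to \cite{GR1985} and used as a black box. Your argument is correct and is in fact the standard textbook proof of this classical fact: deduce the subset inequality $(|E|/|B|)^q\le C\,\omega(E)/\omega(B)$ from H\"older plus the $A_q$ condition, then specialize to $E=B\subset\lambda B$. One minor bookkeeping remark: the paper's Definition~\ref{weight} normalizes the $A_q$ constant with the exponents $1/q$ and $1/q'$ already included in the supremum, whereas in your displayed $A_q$ inequality you use the (equally common) normalization without those outer powers; this only affects whether the final constant comes out as $[\omega]_{A_q}$ or $[\omega]_{A_q}^q$, not the validity of the conclusion.
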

\begin{lemma}\label{MultiplierweightandWeight}\cite{WY2013}
Let $m\geq 2$, $p_1,\ldots,p_m\in(0,\infty)$ and $p\in(0,\infty)$ with $1/p=\sum_{j=1}^m1/{p_j}$. If $\omega_1,\ldots,\omega_m\in A_\infty$, then for any ball $B$, there exists a constants $C>0$ such that
$$
\prod_{j=1}^m\big(\omega_j(B)\big)^{p/{p_j}}\leq C\nu_{\vec{\omega}}(B).
$$
\end{lemma}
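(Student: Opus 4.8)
The plan is to deduce the inequality from the weighted arithmetic--geometric mean inequality together with the logarithmic (exponential) characterization of $A_\infty$. Set $\theta_j:=p/p_j$ for $j=1,\dots,m$. Since $1/p=\sum_{j=1}^m 1/p_j$, we have $\theta_j>0$ and $\sum_{j=1}^m\theta_j=1$; moreover $\nu_{\vec\omega}=\prod_{j=1}^m\omega_j^{\theta_j}$, so the assertion to prove reads
\[
\prod_{j=1}^m\big(\omega_j(B)\big)^{\theta_j}\le C\int_B\prod_{j=1}^m\omega_j(x)^{\theta_j}\,dx .
\]

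First I would apply Jensen's inequality for the convex function $t\mapsto e^{t}$, in the form $\frac{1}{|B|}\int_B e^{h(x)}\,dx\ge\exp\!\big(\frac{1}{|B|}\int_B h(x)\,dx\big)$, to $h(x)=\sum_{j=1}^m\theta_j\log\omega_j(x)$, which gives
\[
\frac{1}{|B|}\int_B\prod_{j=1}^m\omega_j(x)^{\theta_j}\,dx\ \ge\ \prod_{j=1}^m\Big(\exp\Big(\tfrac{1}{|B|}\int_B\log\omega_j(x)\,dx\Big)\Big)^{\theta_j}.
\]
Next I would invoke the fact that each $\omega_j\in A_\infty$ obeys the reverse Jensen inequality $\frac{1}{|B|}\int_B\omega_j(x)\,dx\le C_j\exp\!\big(\frac{1}{|B|}\int_B\log\omega_j(x)\,dx\big)$ for every ball $B$, which is one of the standard equivalent descriptions of the class $A_\infty$ (see \cite{GR1985}). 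Hence $\exp\big(\frac{1}{|B|}\int_B\log\omega_j\big)\ge C_j^{-1}\,\omega_j(B)/|B|$, and substituting this into the previous display, then using $\sum_{j=1}^m\theta_j=1$ to cancel the factors $|B|$, yields
\[
\int_B\prod_{j=1}^m\omega_j(x)^{\theta_j}\,dx\ \ge\ \Big(\prod_{j=1}^m C_j^{-\theta_j}\Big)\,\prod_{j=1}^m\big(\omega_j(B)\big)^{\theta_j},
\]
which is exactly the claim with $C=\prod_{j=1}^m C_j^{\theta_j}$.

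The only step that is not pure bookkeeping is the reverse Jensen inequality for $A_\infty$ weights, so that is the point at which I would appeal to the literature; everything else is direct substitution and the identity $\sum_j\theta_j=1$. One can also run the argument through a reverse H\"older exponent taken uniform over the finitely many weights $\omega_1,\dots,\omega_m$, but the logarithmic route is the most economical. Note finally that the restriction $m\ge 2$ is inessential (the case $m=1$ is trivial) and that $p<\infty$ is automatic from $1/p=\sum_{j=1}^m 1/p_j$.
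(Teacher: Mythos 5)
Your argument is correct: with $\theta_j=p/p_j$ one has $\sum_j\theta_j=1$, and combining Jensen's inequality for $t\mapsto e^t$ with the reverse-Jensen (exponential) characterization of $A_\infty$, namely $\frac{1}{|B|}\int_B\omega_j\,dx\le C_j\exp\big(\frac{1}{|B|}\int_B\log\omega_j\,dx\big)$, yields exactly $\prod_j\big(\omega_j(B)\big)^{\theta_j}\le C\,\nu_{\vec\omega}(B)$ after the factors $|B|$ cancel. The paper itself gives no proof of this lemma (it is quoted from \cite{WY2013}), and your route via the exponential characterization of $A_\infty$ is essentially the standard argument behind the cited result, so there is nothing to add.
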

\begin{lemma}\label{Multipleweighteqvanlence}\cite{LOPTT2009}
Let $\vec\omega=(\omega_1,\ldots,\omega_m)$ and $1\leq p_1,\ldots, p_m<\infty.$ Then $\vec\omega\in A_{\vec P}$ if and only if
$$
\begin{cases}
\omega_j^{1-p_j'}\in A_{mp_j'},j=1\ldots,m,\\
\nu_{\vec\omega}\in A_{mp},
\end{cases}
$$
where the condition $\omega_j{1-p_j'}\in A_{mp_j'}$ in the case $p_j = 1$ is understood as $\omega_j^{1/m}\in A_1.$
\end{lemma}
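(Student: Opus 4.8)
The plan is to reduce the whole statement to two applications of H\"older's inequality after reformulating it in terms of the ``dual'' weights $\sigma_j:=\omega_j^{1-p_j'}$ (and, when $p_j=1$, of $\omega_j$ itself, with the conventions of the statement) and the single weight $\nu:=\nu_{\vec\omega}$. The algebraic backbone is the elementary identity
\[
\prod_{j=1}^m\sigma_j^{1/p_j'}=\prod_{j=1}^m\omega_j^{-1/p_j}=\nu^{-1/p},
\qquad\text{i.e.}\qquad
\nu=\prod_{j=1}^m\sigma_j^{-p/p_j'},
\]
which follows from $1/p_j'-1=-1/p_j$ together with $1/p=\sum_j 1/p_j$ and the definition $\nu=\prod_j\omega_j^{p/p_j}$. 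In this notation the $A_{\vec P}$ condition is $\sup_B\big(\tfrac1{|B|}\int_B\nu\big)^{1/p}\prod_j\big(\tfrac1{|B|}\int_B\sigma_j\big)^{1/p_j'}<\infty$; the $A_{mp}$ condition for $\nu$ involves $\nu^{1-(mp)'}=\nu^{-1/(mp-1)}$; and the $A_{mp_j'}$ condition for $\sigma_j$ involves $\sigma_j^{1-(mp_j')'}=\sigma_j^{-1/(mp_j'-1)}$.

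For the forward implication $\vec\omega\in A_{\vec P}\Rightarrow\{\nu\in A_{mp},\ \sigma_j\in A_{mp_j'}\}$ I would use the identity to expand the ``dual part'' of each one-weight quantity and then apply the generalized H\"older inequality. Concretely, $\nu^{-1/(mp-1)}=\prod_k\sigma_k^{p/(p_k'(mp-1))}$, so H\"older with the exponents $q_k=p_k'(mp-1)/p$ --- whose reciprocals sum to $\tfrac{p}{mp-1}\sum_k 1/p_k'=1$ exactly because $1/p=\sum 1/p_k$ --- gives $\tfrac1{|B|}\int_B\nu^{-1/(mp-1)}\le\prod_k\big(\tfrac1{|B|}\int_B\sigma_k\big)^{p/(p_k'(mp-1))}$, and multiplying by $\tfrac1{|B|}\int_B\nu$ turns the $A_{mp}$ quantity into the $p$-th power of the $A_{\vec P}$ quantity. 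The statement $\sigma_i\in A_{mp_i'}$ is obtained the same way after rewriting $\sigma_i=\nu^{-p_i'/p}\prod_{k\ne i}\sigma_k^{-p_i'/p_k'}$, so that $\sigma_i^{-1/(mp_i'-1)}$ becomes a product of a power of $\nu$ and powers of the $\sigma_k$, $k\ne i$; H\"older with the exponents $p(mp_i'-1)/p_i'$ and $p_k'(mp_i'-1)/p_i'$ (again with reciprocals summing to $1$) reduces the $A_{mp_i'}$ quantity for $\sigma_i$ to the $p_i'$-th power of the $A_{\vec P}$ quantity.

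For the converse I would multiply the two families of one-weight inequalities together: raising the $A_{mp}$ inequality for $\nu$ to the power $1/p$ and each $A_{mp_j'}$ inequality for $\sigma_j$ to the power $1/p_j'$ and taking the product (and using $1/p+\sum_j 1/p_j'=m$) isolates the $A_{\vec P}$ quantity on the left, at the cost of the reciprocal of the product $\big(\tfrac1{|B|}\int_B\nu^{-1/(mp-1)}\big)^{(mp-1)/p}\prod_j\big(\tfrac1{|B|}\int_B\sigma_j^{-1/(mp_j'-1)}\big)^{(mp_j'-1)/p_j'}$ on the right. It then suffices to bound this product below by a positive constant, and here the identity pays off again: since $\tfrac1p\log\nu+\sum_j\tfrac1{p_j'}\log\sigma_j=\log\big(\nu^{1/p}\prod_j\sigma_j^{1/p_j'}\big)=\log 1=0$, Jensen's inequality $\tfrac1{|B|}\int_B g\ge\exp\big(\tfrac1{|B|}\int_B\log g\big)$ applied to each factor yields exactly the lower bound $1$. (Equivalently, one more application of the generalized H\"older inequality with the exponents $\theta_0=\tfrac{mp-1}{pm^2}$, $\theta_j=\tfrac{mp_j'-1}{m^2p_j'}$ --- for which $\theta_0+\sum_j\theta_j=1$ and, by the identity, $(\nu^{-1/(mp-1)})^{\theta_0}\prod_j(\sigma_j^{-1/(mp_j'-1)})^{\theta_j}\equiv 1$ --- shows the product is $\ge 1$ after raising to the power $m^2$.)

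The real work, and essentially the only place where care is needed, is the exponent bookkeeping: one must repeatedly check that the chosen H\"older exponents are admissible, and every such check collapses to the single relation $1/p=\sum_j 1/p_j$ (equivalently $1/p+\sum_j 1/p_j'=m$). The degenerate cases in which some $p_j=1$ must be treated separately, interpreting $\sigma_j$, the factor $\big(\tfrac1{|B|}\int_B\sigma_j\big)^{1/p_j'}$ and the class $A_{mp_j'}$ through the conventions stated in the lemma (so that the requirement on $\omega_j$ becomes $\omega_j^{1/m}\in A_1$); in those cases the corresponding average is replaced by an essential infimum throughout, but the H\"older computations are otherwise unchanged.
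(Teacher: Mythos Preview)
The paper does not prove this lemma; it merely quotes it from \cite{LOPTT2009}. Your argument is correct and is in fact essentially the proof given in that reference (Theorem~3.6 there): the forward implication is obtained by writing each one-weight dual quantity as a product of the others via the identity $\nu^{1/p}\prod_j\sigma_j^{1/p_j'}\equiv 1$ and applying the generalized H\"older inequality, and the converse by multiplying the one-weight conditions and using Jensen (or, equivalently, one more H\"older step) to bound the residual product below by~$1$. Your exponent checks are accurate; the only point that deserves a remark when you write this up is that the Jensen step tacitly assumes $\log\nu$ and $\log\sigma_j$ are locally integrable, which follows from the one-weight $A_q$ memberships themselves (or can be bypassed by the H\"older variant you mention).
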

\begin{lemma}\label{BMOandBMOw}\cite{MW1976}
Suppose $\omega\in A_\infty,$ then $\|b\|_{BMO(\omega)}\approx\|b\|_{BMO}.$ Here
$$
BMO^p(\omega)=\bigg\{b:\|b\|_{BMO^p(\omega)}=\sup_{B}\Big(\frac1{\omega(B)}\int_B|b(x)-b_{B,\omega}|^p\omega(x)dx\Big)^{1/p}<\infty\bigg\}
$$
and $b_{B,\omega}=\frac1{\omega(B)}\int_Bb(x)\omega(x)dx.$
\end{lemma}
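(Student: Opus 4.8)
The statement is the Muckenhoupt--Wheeden theorem \cite{MW1976}, and the plan is to prove $\|b\|_{BMO^p(\omega)}\approx\|b\|_{BMO}$ for every $p\in[1,\infty)$ and every $\omega\in A_\infty$, with implied constants depending only on $n$, $p$ and the $A_\infty$ data of $\omega$. Two standard ingredients will be used repeatedly. The first is the John--Nirenberg inequality: in the unweighted case it gives $\|b\|_{BMO^q}=\|b\|_{BMO^1}=\|b\|_{BMO}$ (the Remark after Definition~\ref{BMO}), and since $\omega\in A_\infty$ is doubling (Lemma~\ref{doublemeasure}) the space $(\mathbb{R}^n,|\cdot|,\omega\,dx)$ is of homogeneous type, so its homogeneous-space version yields $\|b\|_{BMO^q(\omega)}\approx\|b\|_{BMO^1(\omega)}$ with constants depending only on $q$, $n$ and the doubling constant of $\omega$. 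The second is the elementary inequality $\frac1{\mu(B)}\int_B|b-b_{B,\mu}|^p\,d\mu\le 2^p\inf_{c}\frac1{\mu(B)}\int_B|b-c|^p\,d\mu$, valid for any finite measure $\mu$ (Minkowski's inequality in $L^p(d\mu/\mu(B))$ together with $|b_{B,\mu}-c|\le\big(\frac1{\mu(B)}\int_B|b-c|^p\,d\mu\big)^{1/p}$ by Jensen), which I apply with $d\mu=dx$ and with $d\mu=\omega\,dx$ to switch freely between centering at $b_B$ and at $b_{B,\omega}$.

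For the upper estimate $\|b\|_{BMO^p(\omega)}\lesssim\|b\|_{BMO}$, fix a ball $B$ and use the layer-cake formula,
\[
\frac1{\omega(B)}\int_B|b-b_B|^p\,\omega\,dx=\frac{p}{\omega(B)}\int_0^\infty\lambda^{p-1}\,\omega\big(\{x\in B:|b(x)-b_B|>\lambda\}\big)\,d\lambda.
\]
By the unweighted John--Nirenberg inequality, $\big|\{x\in B:|b-b_B|>\lambda\}\big|\le C|B|e^{-c\lambda/\|b\|_{BMO}}$. Since $\omega\in A_\infty$ lies in some class $A_{q}$, Lemma~\ref{ReHolder} turns this exponentially small Lebesgue measure into an exponentially small $\omega$-measure, $\omega\big(\{x\in B:|b-b_B|>\lambda\}\big)\le C\omega(B)e^{-c'\lambda/\|b\|_{BMO}}$, and the $\lambda$-integral above is then $\lesssim\|b\|_{BMO}^p$. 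Replacing $b_B$ by $b_{B,\omega}$ via the comparison inequality (with $d\mu=\omega\,dx$) costs only a constant factor, so $\|b\|_{BMO^p(\omega)}\lesssim\|b\|_{BMO}$.

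For the lower estimate it suffices, by the weighted John--Nirenberg inequality, to prove $\|b\|_{BMO}\lesssim\|b\|_{BMO^1(\omega)}$. Fix $q$ with $\omega\in A_{q}$. A Hölder-inequality argument based on the $A_{q}$ condition for $\omega$ gives, for every ball $B$ and every measurable $E\subset B$,
\[
\frac{|E|}{|B|}\le C\Big(\frac{\omega(E)}{\omega(B)}\Big)^{1/q}.
\]
Now fix $B$ and apply the layer-cake formula to $\frac1{|B|}\int_B|b-b_{B,\omega}|\,dx$: the weighted John--Nirenberg inequality gives $\omega\big(\{x\in B:|b-b_{B,\omega}|>\lambda\}\big)\le C\omega(B)e^{-c\lambda/\|b\|_{BMO^1(\omega)}}$, and the mass comparison just displayed converts this into $\big|\{x\in B:|b-b_{B,\omega}|>\lambda\}\big|\le C|B|e^{-(c/q)\lambda/\|b\|_{BMO^1(\omega)}}$; integrating in $\lambda$ and then switching the center from $b_{B,\omega}$ to $b_B$ yields $\frac1{|B|}\int_B|b-b_B|\,dx\lesssim\|b\|_{BMO^1(\omega)}$, hence $\|b\|_{BMO}\lesssim\|b\|_{BMO^1(\omega)}\le\|b\|_{BMO^p(\omega)}$ (the last step by Jensen with respect to $\omega\,dx/\omega(B)$). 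The crux of the argument is transferring the exponential decay of the super-level sets between Lebesgue measure and $\omega\,dx$ in both directions — that is Lemma~\ref{ReHolder} in the easy direction and its $A_{q}$ counterpart above in the hard one — and this, together with the doubling needed for the weighted John--Nirenberg inequality, is precisely where the hypothesis $\omega\in A_\infty$ is used; the remaining care is bookkeeping, namely checking that the John--Nirenberg constants on $(\mathbb{R}^n,\omega\,dx)$ depend only on the doubling constant (hence only on the $A_\infty$ data of $\omega$), so that all estimates are uniform in $b$ and in $B$.
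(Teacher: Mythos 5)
Your proof is correct. Note, however, that the paper itself offers no argument for this lemma: it is quoted as a known result from \cite{MW1976}, so there is no internal proof to compare against. What you supply is the standard self-contained derivation: the upper bound $\|b\|_{BMO^p(\omega)}\lesssim\|b\|_{BMO}$ by combining the unweighted John--Nirenberg inequality with the $A_\infty$ small-set property of Lemma~\ref{ReHolder} (plus the harmless recentering from $b_B$ to $b_{B,\omega}$), and the lower bound by running the same scheme in reverse, using the John--Nirenberg inequality for the doubling measure $\omega\,dx$ (legitimate by Lemma~\ref{doublemeasure} and the homogeneous-type theory) together with the converse mass-transfer inequality $\frac{|E|}{|B|}\le C\big(\frac{\omega(E)}{\omega(B)}\big)^{1/q}$, which your Hölder computation from the $A_q$ condition correctly establishes; Jensen then upgrades $p=1$ to general $p$. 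The one point that genuinely needs the care you flag is the weighted John--Nirenberg step: its constants must depend only on the doubling constant of $\omega$, and your appeal to the Calderón--Zygmund decomposition on spaces of homogeneous type settles that. Compared with the paper's bare citation, your route buys a self-contained proof with explicit dependence of the comparability constants on $n$, $p$ and the $A_\infty$ (respectively $A_q$) data of $\omega$, at the cost of importing the homogeneous-type John--Nirenberg machinery; no gaps remain.
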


From now on, we always adapt the following notation. Given a ball $B$ and $m$ functions $f_j,j=1,\cdots,m,$ we decompose $f_j=f^0_j+f_j^\infty$ with $f_j^0=f\chi_{2B}$ and $f_j^\infty=f\chi_{\mathbb R^n\backslash 2B}.$ Let $\mathcal{I}:=\{(d_1,\ldots,d_m)\in\{0,\infty\}^m: \text{there is at least one}~d_j\neq 0\},$ by $\vec f^{\vec D}$ we mean that $(f_1^{d_1},\ldots,f_m^{d_m})$ with $\vec D=(d_1,\cdots,d_m)\in \mathcal I.$ The nonempty set $\sigma\subset\{1,\cdots,m\}$ is the set of all number $i$ such that $d_i=\infty.$ and $\sigma^c$ denote by the supplementary of $\sigma.$ Now we can give out our main results in this section.

\begin{theorem}\label{OperatoronAmalgam}
Let $1\leq s\leq q_j \leq\alpha_j<p_j\leq\infty,\ j = 1,\ldots,m$ with
$1/q=\sum_{j=1}^m 1/{q_j}, 1/p=\sum_{j=1}^m 1/{p_j}, 1/{\alpha}=\sum_{j=1}^m 1/{\alpha_m}$
and  $p/{p_j}=q/{q_j}=\alpha/{\alpha_j}$, $\vec\omega\in A_{\vec{Q}/s}\cap (A_{\infty})^m$ and $m$-sublinear operator $\mathcal{T}$ satisfies: for any ball $B$ and almost everywhere $x\in B$
\begin{align}\label{AmalgamCondition}
\big|\mathcal{T}&(\vec{f}^{\vec{D}})(x)\big|\\
&\leq C_1\mathop{\prod}\limits_{j\in\sigma^c}\bigg(\frac{1}{|2B|}\int_{2B}|f_j(z)|^{s}dz\bigg)^{1/{s}}
\mathop{\sum}\limits_{k=1}^\infty\frac{k}{2^{nk|\sigma^c|}}\mathop{\prod}\limits_{j\in\sigma}\bigg(\frac{1}{2^{k+1}B}\int_{2^{k+1}B\backslash 2^kB}|f_j(z)|^{s}dz\bigg)^{1/{s}},\nonumber
\end{align}
Then
\begin{itemize}
\item[(i)] if $q_1,\ldots,q_m>1$ and $\mathcal{T}$ is bounded from $L^{q_1}(\omega_1)\times\cdots\times L^{q_m}(\omega_m)$ to $L^q(\nu_{\vec{\omega}})$, then
$\mathcal{T}$ is also bounded from $(L^{q_1}(\omega_1),L^{p_1})^{\alpha_1}\times\cdots\times (L^{q_m}(\omega_m),L^{p_m})^{\alpha_m}$ to $(L^q(\nu_{\vec{\omega}}),L^p)^{\alpha};$
\item[(ii)] if there exists a $q_j=1$ and $\mathcal{T}$ is bounded from $L^{q_1}(\omega_1)\times\cdots\times L^{q_m}(\omega_m)$ to $L^{q,\infty}(\nu_{\vec{\omega}})$, then
$\mathcal{T}$ is also bounded from $(L^{q_1}(\omega_1),L^{p_1})^{\alpha_1}\times\cdots\times (L^{q_m}(\omega_m),L^{p_m})^{\alpha_m}$ to $(L^{q,\infty}(\nu_{\vec{\omega}}),L^p)^{\alpha}.$
\end{itemize}
\end{theorem}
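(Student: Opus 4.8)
The plan is to run an amalgam-space localization argument in the spirit of Feuto and of Komori--Shirai, adapted to the multilinear setting. Fix $r>0$ and, for $x_0\in\mathbb R^n$, set $B=B(x_0,r)$; decompose $f_j=f_j^0+f_j^\infty$ as above, so that by $m$-sublinearity
\begin{align*}
|\mathcal{T}(\vec f)(x)|\le|\mathcal{T}(f_1^0,\ldots,f_m^0)(x)|+\sum_{\vec D\in\mathcal I}|\mathcal{T}(\vec f^{\vec D})(x)|,\qquad x\in B.
\end{align*}
It suffices to bound $\nu_{\vec\omega}(B)^{\frac1\alpha-\frac1q-\frac1p}\|\mathcal{T}(\vec f)\chi_B\|_{L^q(\nu_{\vec\omega})}$ (with $L^{q,\infty}$ in place of $L^q$ in case (ii)) in $L^p$ with respect to $x_0$, uniformly in $r$. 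For the local term I would invoke the assumed strong-type (resp.\ weak-type) Lebesgue boundedness to get $\|\mathcal{T}(f_1^0,\ldots,f_m^0)\chi_B\|_{L^q(\nu_{\vec\omega})}\lesssim\prod_j\|f_j\chi_{2B}\|_{L^{q_j}(\omega_j)}$; combining this with doubling of the $\omega_j$ (Lemma~\ref{doublemeasure}), the bound $\prod_j\omega_j(B)^{q/q_j}\lesssim\nu_{\vec\omega}(B)$ (Lemma~\ref{MultiplierweightandWeight}), and the identities $p/p_j=q/q_j=\alpha/\alpha_j$ (which give $\tfrac1\alpha-\tfrac1q-\tfrac1p=\sum_j(\tfrac1{\alpha_j}-\tfrac1{q_j}-\tfrac1{p_j})$), this term is dominated, pointwise in $x_0$, by $\prod_j\big(\omega_j(B(x_0,2r))^{\frac1{\alpha_j}-\frac1{q_j}-\frac1{p_j}}\|f_j\chi_{B(x_0,2r)}\|_{L^{q_j}(\omega_j)}\big)$.

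For a tail term $\vec D\in\mathcal I$ we have $\sigma\ne\emptyset$, and \eqref{AmalgamCondition} bounds $|\mathcal{T}(\vec f^{\vec D})|$ on $B$ by a constant, so $\|\mathcal{T}(\vec f^{\vec D})\chi_B\|_{L^q(\nu_{\vec\omega})}$ equals that constant times $\nu_{\vec\omega}(B)^{1/q}$. I would then convert each average $\big(|Q|^{-1}\int_Q|f_j|^{s}\big)^{1/s}$ appearing in \eqref{AmalgamCondition} (with $Q=2B$ when $j\in\sigma^c$ and $Q=2^{k+1}B$ when $j\in\sigma$) into a weighted norm by Hölder's inequality with exponents $q_j/s$ and $(q_j/s)'$, which produces a factor $\big(\int_Q\omega_j^{1-(q_j/s)'}\big)^{1/(s(q_j/s)')}$; enlarging all these integrals to the ball $2^{k+1}B$ and applying the hypothesis $\vec\omega\in A_{\vec Q/s}$ on $2^{k+1}B$ collapses their product to $C\,|2^{k+1}B|^{m/s}\nu_{\vec\omega}(2^{k+1}B)^{-1/q}$. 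The powers of $|2^{k+1}B|$ cancel exactly against the normalizations $|Q|^{-1/s}$, the residual growth $2^{nk|\sigma^c|/s}$ (coming from $|2B|\approx 2^{-nk}|2^{k+1}B|$) being absorbed by the factor $2^{-nk|\sigma^c|}$ in \eqref{AmalgamCondition}; using in addition $\nu_{\vec\omega}(2^{k+1}B)^{-1/q}\approx\prod_j\omega_j(2^{k+1}B)^{-1/q_j}$ (Lemma~\ref{MultiplierweightandWeight} together with Hölder's inequality) and $\nu_{\vec\omega}(B)^{\frac1\alpha-\frac1p}\approx\prod_j\omega_j(B)^{\frac1{\alpha_j}-\frac1{p_j}}$, one arrives at
\begin{align*}
\nu_{\vec\omega}(B)^{\frac1\alpha-\frac1q-\frac1p}\big\|\mathcal{T}(\vec f^{\vec D})\chi_B\big\|_{L^q(\nu_{\vec\omega})}
&\lesssim\sum_{k\ge1}k\,2^{-\epsilon k}\prod_{j\in\sigma^c}\Big(\omega_j(B(x_0,2r))^{\frac1{\alpha_j}-\frac1{q_j}-\frac1{p_j}}\|f_j\chi_{B(x_0,2r)}\|_{L^{q_j}(\omega_j)}\Big)\\
&\qquad\times\prod_{j\in\sigma}\Big(\omega_j(B(x_0,2^{k+1}r))^{\frac1{\alpha_j}-\frac1{q_j}-\frac1{p_j}}\|f_j\chi_{B(x_0,2^{k+1}r)}\|_{L^{q_j}(\omega_j)}\Big)
\end{align*}
for some $\epsilon>0$; here the genuine geometric decay $2^{-\epsilon k}$ is obtained from the ratios $\omega_j(B)/\omega_j(2^{k+1}B)$ and $\omega_j(2B)/\omega_j(2^{k+1}B)$ via the reverse Hölder inequality (Lemma~\ref{ReHolder}), the assumption $\sigma\ne\emptyset$ ensuring that at least one such ratio enters with a positive exponent.

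It then remains to take the $L^p$ norm in $x_0$. Both the local term and each fixed-$k$ summand of the tail are products over $j$ of functions of $x_0$, so Hölder's inequality with $1/p=\sum_j 1/p_j$ dominates their $L^p_{x_0}$ norms by $\prod_j\|f_j\|_{(L^{q_j}(\omega_j),L^{p_j})^{\alpha_j}}$, each factor being an amalgam norm evaluated at a single radius and hence at most the supremum defining that norm. Summing the convergent series $\sum_{k\ge1}k\,2^{-\epsilon k}$ (using the $p$-triangle inequality in the case $p<1$) and the finitely many $\vec D\in\mathcal I$, and finally taking the supremum over $r>0$, completes part (i); part (ii) follows the same scheme with $L^q(\nu_{\vec\omega})$ replaced by $L^{q,\infty}(\nu_{\vec\omega})$ throughout (note that $q_j=1$ forces $s=1$). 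The principal obstacle is the exponent bookkeeping in the tail estimate --- verifying that after Hölder and the $A_{\vec Q/s}$ condition all powers of $|2^{k+1}B|$ cancel and the remaining weight ratios, handled by reverse Hölder, produce a $k$-summable series; in particular, that the combinatorial weight $k/2^{nk|\sigma^c|}$ in \eqref{AmalgamCondition} is precisely what compensates the loss incurred when the ``local'' factors $j\in\sigma^c$ are measured against the dilated ball $2^{k+1}B$ instead of $2B$.
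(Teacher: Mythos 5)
Your proposal is correct and takes essentially the same route as the paper: the same local/tail decomposition, Hölder with exponents $q_j/s$ combined with the $A_{\vec Q/s}$ condition to collapse the averages in \eqref{AmalgamCondition} to $\nu_{\vec\omega}(2^{k+1}B)^{-1/q}$ times weighted $L^{q_j}$ norms, geometric decay in $k$ via Lemma \ref{ReHolder} (the paper applies it to the ratio $\nu_{\vec\omega}(B)/\nu_{\vec\omega}(2^{k+1}B)$ with exponent $\tfrac1\alpha-\tfrac1p>0$, you to the individual $\omega_j$ with $j\in\sigma$, which is an equivalent bookkeeping), and a final Hölder in the center variable with $1/p=\sum_j 1/p_j$ followed by the supremum over $r$. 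The only cosmetic difference is that you keep the $j\in\sigma^c$ factors at scale $2B$ whereas the paper enlarges all norms to $2^{k+1}B$; both versions close the argument.
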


\begin{proof}
Let $1\leq s\leq q_j\leq \alpha_j<p_j$ and $f_j\in (L^{q_j}(\omega_j),L^{p_j})^{\alpha_j},j=1,\ldots,m.$ Fix $B:=B(y,r),$ then we have that for almost every $x\in B(y,r)$
\begin{align*}
|\mathcal{T}&(f_1,\ldots,f_m)(x)|\\
&\leq |\mathcal{T}(f_1^0,\ldots,f_m^0)(x)|+\mathop{\sum}\limits_{(d_1,\ldots,d_m)\in \mathcal{I}}|\mathcal{T}(f_1^{d_1},\ldots,f_m^{d_m})(x)|\\
&\leq|\mathcal{T}(f_1^0,\ldots,f_m^0)(x)|\\
\hspace*{12pt}&+C_1\mathop{\sum}\limits_{\sigma\neq\emptyset}\mathop{\prod}\limits_{j\in\sigma^c}\bigg(\frac{1}{|2B|}\int_{2B}|f_j(z)|^{s}dz\bigg)^{1/{s}}
\mathop{\sum}\limits_{k=1}^\infty\frac{k}{2^{nk|\sigma^c|}}\mathop{\prod}\limits_{j\in\sigma}\bigg(\frac{1}{2^{k+1}B}\int_{2^{k+1}B\backslash 2^kB}|f_j(z)|^{s}dz\bigg)^{1/{s}}.
\end{align*}
If $s=q_j$, we have
$$
\bigg(\frac{1}{|B|}\int_B|f_j(z)|^{s}dz\bigg)^{1/{s}}\leq \bigg(\int_B|f_j(z)|^{q_j}\omega_j(z)dz\bigg)^{1/{q_j}}|B|^{-\frac1{q_j}}(\inf_B\omega_j(z))^{-1/s}.
$$
And if $s<q_j,$ by H\"older's inequality, we derive
\begin{align*}
\bigg(\frac{1}{|B|}\int_B|f_j(z)|^{s}dz\bigg)^{1/{s}}\leq C\bigg(\int_B|f_j(z)|^{q_j}\omega_j(z)dz\bigg)^{1/{q_j}}(|B|)^{-\frac1{q_j}}
\bigg(\frac1{|B|}\int_B\omega_j(z)^{1-(\frac{q_j}{s})'}dz\bigg)^{\frac1{s(\frac{q_j}{s})'}}.
\end{align*}
It comes out that
\begin{align}\label{Estimate21}
\mathop{\sum}\limits_{(d_1,\ldots,d_m)\in \mathcal{I}}&|\mathcal{T}(f_1^{d_1},\ldots,f_m^{d_m})(x)|\\
&\leq C\sum_{\sigma\neq\emptyset}\prod\limits_{j\in\sigma^c}\frac{1}{|2B|^{1/{q_j}}}\bigg(\frac1{|2B|}\int_{2B}\omega_j(z)^{1-(\frac{q_j}{s})'}dz\bigg)^{\frac1{s(\frac{q_j}{s})'}}\|f\chi_{2B}\|_{L^{q_j}(\omega_j)}\nonumber\\
&\hspace*{12pt}\times\sum_{k=1}^\infty\frac{k}{2^{nk|\sigma^c|}} \prod\limits_{j\in\sigma}
\frac{1}{|2^{k+1}B|^{1/{q_j}}}\bigg(\frac1{|2^{k+1}B|}\int_{2^{k+1}B}\omega_j(z)^{1-(\frac{q_j}{s})'}dz\bigg)^{\frac1{s(\frac{q_j}{s})'}}\|f\chi_{2^{k+1}B}\|_{L^{q_j}(\omega_j)}\nonumber\\
&\leq C\sum_{k=1}^\infty k|2^{k+1}B|^{-\frac1q}\prod_{j=1}^m\bigg(\frac1{|2^{k+1}B|}\int_{2^{k+1}B}\omega_j(z)^{1-(\frac{q_j}{s})'}dz\bigg)^{\frac1{s(\frac{q_j}{s})'}}\|f_j\chi_{2^{k+1}B}\|_{L^{q_j}(\omega_j)}\nonumber\\
&\leq C\sum_{k=1}^\infty k\nu_{\vec\omega}(2^{k+1}B)^{-\frac1q}\prod_{j=1}^m\|f_j\chi_{2^{k+1}B}\|_{L^{q_j}(\omega_j)}.\nonumber
\end{align}
By Lemma \ref{MultiplierweightandWeight} and the condition $q/q_j=p/p_j=\alpha/\alpha_j,$ we have an important inequality as follows
$$
\nu_{\vec\omega}(B)^{\frac1\alpha-\frac1q-\frac1p}\leq C\prod_{j=1}^m\omega_j(B)^{\frac1{\alpha_j}-\frac1{q_j}-\frac1{p_j}}.
$$
Next we first prove $(i).$ Assume that all $q_j>1,j=1,\ldots,m.$ Since $\mathcal{T}: L^{q_1}(\omega_1)\times\cdots\times L^{q_m}(\omega_m)\rightarrow L^q(\nu_{\vec{\omega}})$,  then Lemma \ref{doublemeasure}, \ref{Multipleweighteqvanlence} and \ref{ReHolder} lead us to that
\begin{align}\label{Estimate211}
(\nu_{\vec{\omega}}&(B(y,r)))^{1/\alpha-1/q-1/p}\|\mathcal{T}(\vec{f})\chi_{B(y,r)}\|_{L^q(\nu_{\vec{\omega}})}\\
&\leq C\prod_{j=1}^m\omega_j(B(y,2r))^{1/{\alpha_j}-1/{q_j}-1/{p_j}}\|f_j\chi_{B(y,2r)}\|_{L^{q_j}(\omega_j)}\nonumber\\
&\hspace*{12pt}+C\sum_{k=1}^\infty k\bigg(\frac{\nu_{\vec\omega}(B(y,r))}{\nu_{\vec\omega}(2^{k+1}B(y,r))}\bigg)^{\frac1\alpha-\frac1p}\prod_{i=1}^m \omega_j(B(y,2^{k+1}r))^{1/{\alpha_j}-1/{q_j}-1/{p_j}} \|f_j\chi_{2^{k+1}B}\|_{L^{q_j}(\omega_j)}\nonumber\\
&\leq C\bigg(\sum_{k=0}^\infty k2^{-kn\frac{mp}{s}}\bigg)\prod_{i=1}^m \omega_j(B(y,2^{k+1}r))^{1/{\alpha_j}-1/{q_j}-1/{p_j}} \|f_j\chi_{2^{k+1}B}\|_{L^{q_j}(\omega_j)},\nonumber
\end{align}
and hence by H\"older's inequality we have
\begin{align*}
_{r}\|\mathcal{T}&(\vec{f})\|_{(L^q(\nu_{\vec{\omega}}),L^p)^{\alpha}}\\
&\leq C\bigg(\sum_{k=0}^\infty k2^{-kn\frac{mp}{s}}\bigg)\bigg(\int_{\mathbb{R}^n}\bigg(\prod_{i=1}^m \omega_j(B(y,2^{k+1}r))^{1/{\alpha_j}-1/{q_j}-1/{p_j}} \|f_j\chi_{2^{k+1}B}\|_{L^{q_j}(\omega_j)}\bigg)^{p}dy\bigg)^{\frac1p}\\
&\leq C\prod_{i=1}^m \bigg(\int_{\mathbb{R}^n}\bigg(\omega_j(B(y,2^{k+1}r))^{1/{\alpha_j}-1/{q_j}-1/{p_j}} \|f_j\chi_{2^{k+1}B}\|_{L^{q_j}(\omega_j)}\bigg)^{p_j}dy\bigg)^{\frac{1}{p_j}}\\
&\leq C\prod_{j=1}^m\|f_j\|_{(L^{q_j}(\omega_j),L^{p_j})^{\alpha_j}}.
\end{align*}
Taking supremum over all $r>0$ on both side, we get
$$
\|\mathcal{T}(\vec{f})\|_{(L^q(\nu_{\vec{\omega}}),L^p)^{\alpha}}\leq C\prod_{j=1}^m\|f_j\|_{(L^{q_j}(\omega_j),L^{p_j})^{\alpha_j}}.
$$

We now prove part $(ii)$. Suppose there is at least one $q_j=1$.  Combining $\mathcal{T}: L^{q_1}(\omega_1)\times\cdots\times L^{q_m}(\omega_m)\rightarrow L^{q,\infty}(\nu_{\vec{\omega}})$, $\vec\omega\in A_{\vec Q/{s}},$ with the H\"older inequality, we get
\begin{align*}
\|\mathcal{T}(\vec{f})&\chi_{B(y,r)}\|_{L^{q,\infty}(\nu_{\vec{\omega}})}\\
&\leq C\prod_{j=1}^m\|f_j\chi_{2B}\|_{L^{q_j}(\omega_j)}+C\sum_{k=1}^\infty k\bigg(\frac{\nu_{\vec\omega}(B(y,r))}{\nu_{\vec\omega}(2^{k+1}B(y,r))}\bigg)^{\frac1q}\prod_{i=1}^m\|f_j\chi_{2^{k+1}B}\|_{L^{q_j}(\omega_j)}
\end{align*}
according to \eqref{Estimate21}. Proceeding along the lines of the proof of part $(i)$, we find that
$$
\|\mathcal{T}(\vec{f})\|_{(L^{q,\infty}(\nu_{\vec{\omega}}),L^p)^{\alpha}}\leq C\prod_{j=1}^m\|f_j\|_{(L^{q_j}(\omega_j),L^{p_j})^{\alpha_j}}.
$$
\end{proof}

Next, we consider the commutators of multilinear or submultilinear operator which is defined by
\begin{align*}
\mathcal T_{\Sigma\vec b}(\vec f)&=\sum_{j=1}^m[\mathcal T b_j(\vec f)-\mathcal T(f_1,\cdots,f_{j-1},b_jf_j,f_{j+1}\cdots,f_m)]\\
&:=\sum_{j=1}^m [b_j,\mathcal T(\vec f)]=\sum_{j=1}^m\mathcal T_{b_j}(\vec f) .
\end{align*}
We have the strong type boundedness of the commutators $\mathcal T_{\Sigma\vec b}$ over product weighted amalgam spaces.

\begin{theorem}\label{multilinearCommutatoronAmalgam}
Let $1\leq s< q_j \leq\alpha_j<p_j\leq\infty,\ j = 1,\ldots,m$ with
$1/q=\sum_{j=1}^m 1/{q_j}, 1/p=\sum_{j=1}^m 1/{p_j}, 1/{\alpha}=\sum_{j=1}^m 1/{\alpha_m}$
and  $p/{p_j}=q/{q_j}=\alpha/{\alpha_j}$, $\vec\omega\in A_{\vec{Q}/s}\cap (A_{\infty})^m$ and $m$-sublinear operator $\mathcal{T}$ satisfies condition (\ref{AmalgamCondition}) and admits a multilinear commutator $T_{\Sigma \vec{b}}$. If $\mathcal{T}_{\Sigma\vec{b}}$ is bounded from $L^{q_1}(\omega_1)\times\cdots\times L^{q_m}(\omega_m)$ to $L^q(\nu_{\vec{\omega}})$, then
$\mathcal{T}_{\Sigma\vec{b}}$ is also bounded from $(L^{q_1}(\omega_1),L^{p_1})^{\alpha_1}\times\cdots\times (L^{q_m}(\omega_m),L^{p_m})^{\alpha_m}$ to $(L^q(\nu_{\vec{\omega}}),L^p)^{\alpha}$.
\end{theorem}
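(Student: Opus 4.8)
The plan is to mimic the proof of Theorem~\ref{OperatoronAmalgam}, separating a local part governed by the assumed boundedness $\mathcal{T}_{\Sigma\vec b}\colon L^{q_1}(\omega_1)\times\cdots\times L^{q_m}(\omega_m)\to L^q(\nu_{\vec\omega})$ from a tail part governed by the pointwise bound \eqref{AmalgamCondition} for $\mathcal{T}$. Fix $B=B(y,r)$, decompose $f_j=f_j^0+f_j^\infty$ as above, write $\vec f^{\,0}=(f_1^0,\dots,f_m^0)$, and set $\lambda_j=(b_j)_{2B}$. Using the (sub)linearity of $\mathcal{T}$ in each slot and adding and subtracting $\lambda_j$, one obtains for a.e.\ $x\in B$
\begin{align*}
|\mathcal{T}_{\Sigma\vec b}(\vec f)(x)|\le{}&|\mathcal{T}_{\Sigma\vec b}(\vec f^{\,0})(x)|\\
&+\sum_{j=1}^m\sum_{\vec D\in\mathcal I}\Big(|b_j(x)-\lambda_j|\,\big|\mathcal{T}(\vec f^{\vec D})(x)\big|+\big|\mathcal{T}\big(f_1^{d_1},\dots,(b_j-\lambda_j)f_j^{d_j},\dots,f_m^{d_m}\big)(x)\big|\Big).
\end{align*}
For the first term, the boundedness of $\mathcal{T}_{\Sigma\vec b}$, the doubling property of the $\omega_j$ (Lemma~\ref{doublemeasure}), and the inequality $\nu_{\vec\omega}(B)^{1/\alpha-1/q-1/p}\le C\prod_j\omega_j(2B)^{1/\alpha_j-1/q_j-1/p_j}$ (Lemma~\ref{MultiplierweightandWeight} together with $p/p_j=q/q_j=\alpha/\alpha_j$) give $\nu_{\vec\omega}(B)^{1/\alpha-1/q-1/p}\|\mathcal{T}_{\Sigma\vec b}(\vec f^{\,0})\chi_B\|_{L^q(\nu_{\vec\omega})}\le C\prod_j\omega_j(2B)^{1/\alpha_j-1/q_j-1/p_j}\|f_j\chi_{2B}\|_{L^{q_j}(\omega_j)}$, i.e.\ the $k=0$ term of the estimate already obtained in the proof of Theorem~\ref{OperatoronAmalgam}.

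For the tail terms I would reuse the chain leading to \eqref{Estimate21}. In the term $|b_j(x)-\lambda_j|\,|\mathcal{T}(\vec f^{\vec D})(x)|$, estimate $|\mathcal{T}(\vec f^{\vec D})(x)|$ by \eqref{AmalgamCondition} (a quantity independent of $x\in B$) and then take $\|\cdot\chi_B\|_{L^q(\nu_{\vec\omega})}$; since $\nu_{\vec\omega}\in A_\infty$ (Lemma~\ref{Multipleweighteqvanlence}), Lemma~\ref{BMOandBMOw} yields $\|(b_j-\lambda_j)\chi_B\|_{L^q(\nu_{\vec\omega})}\le C\|b_j\|_{BMO}\,\nu_{\vec\omega}(B)^{1/q}$, so this contribution is $C\|b_j\|_{BMO}$ times the right-hand side of \eqref{Estimate21}. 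In the term with $(b_j-\lambda_j)$ inside $\mathcal{T}$, applying \eqref{AmalgamCondition} with $f_j$ replaced by $(b_j-\lambda_j)f_j$ replaces $\big(\tfrac1{|2^{k+1}B|}\int|f_j|^s\big)^{1/s}$ by $\big(\tfrac1{|2^{k+1}B|}\int_{2^{k+1}B}|b_j-\lambda_j|^s|f_j|^s\big)^{1/s}$; since $s<q_j$ there is room to run H\"older with exponents $q_j/s$ and $(q_j/s)'$, producing $\|f_j\chi_{2^{k+1}B}\|_{L^{q_j}(\omega_j)}$ times $\big(\tfrac1{|2^{k+1}B|}\int_{2^{k+1}B}|b_j-\lambda_j|^{s(q_j/s)'}\omega_j^{1-(q_j/s)'}\big)^{1/(s(q_j/s)')}$. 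Because $\vec\omega\in A_{\vec{Q}/s}$ forces $\omega_j^{1-(q_j/s)'}\in A_\infty$ (Lemma~\ref{Multipleweighteqvanlence}), a weighted John--Nirenberg estimate relative to the measure $\omega_j^{1-(q_j/s)'}dx$ together with the telescoping bound $|\lambda_j-(b_j)_{2^{k+1}B}|\le Ck\|b_j\|_{BMO}$ controls this last factor by $Ck\|b_j\|_{BMO}\big(\tfrac1{|2^{k+1}B|}\int_{2^{k+1}B}\omega_j^{1-(q_j/s)'}\big)^{1/(s(q_j/s)')}$. Summing over $j$ and $\vec D$ and proceeding exactly as in \eqref{Estimate211}, one gets, for some $\delta>0$ furnished by Lemma~\ref{ReHolder} applied to $\nu_{\vec\omega}\in A_\infty$,
$$\nu_{\vec\omega}(B)^{1/\alpha-1/q-1/p}\|\mathcal{T}_{\Sigma\vec b}(\vec f)\chi_B\|_{L^q(\nu_{\vec\omega})}\le C\sum_{j=1}^m\|b_j\|_{BMO}\sum_{k=0}^\infty k^2\,2^{-\delta k}\prod_{i=1}^m\omega_i(B(y,2^{k+1}r))^{1/\alpha_i-1/q_i-1/p_i}\|f_i\chi_{2^{k+1}B}\|_{L^{q_i}(\omega_i)}.$$

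To finish, take the $L^p(dy)$-norm of both sides, apply the generalized H\"older inequality in the exponents $p_1,\dots,p_m$ (recalling $p/p_j=q/q_j=\alpha/\alpha_j$) exactly as at the end of the proof of Theorem~\ref{OperatoronAmalgam} to split the product, for each fixed $k$, into $\prod_i\|f_i\|_{(L^{q_i}(\omega_i),L^{p_i})^{\alpha_i}}$, sum the convergent series $\sum_k k^2\,2^{-\delta k}$, and take the supremum over $r>0$. The only genuinely new point compared with Theorem~\ref{OperatoronAmalgam}, and where care is needed, is the tail term carrying $b_j-\lambda_j$ inside $\mathcal{T}$: one must extract $\|b_j\|_{BMO}$ from a $\omega_j^{1-(q_j/s)'}$-weighted average of $|b_j-\lambda_j|$ (a weighted John--Nirenberg / $A_\infty$--$BMO$ argument), track the telescoping loss $|\lambda_j-(b_j)_{2^{k+1}B}|\lesssim k\|b_j\|_{BMO}$, and verify that the extra polynomial-in-$k$ growth this creates is still absorbed by the geometric decay --- which is precisely why the hypothesis here is the strict inequality $s<q_j$ rather than $s\le q_j$ as in Theorem~\ref{OperatoronAmalgam}.
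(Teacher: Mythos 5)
Your proposal is correct and follows essentially the same route as the paper: the same splitting of each $\mathcal{T}_{b_j}$ into a local piece handled by the assumed $L^{q_1}(\omega_1)\times\cdots\times L^{q_m}(\omega_m)\to L^q(\nu_{\vec\omega})$ bound, a piece with $b_j-\lambda_j$ outside $\mathcal{T}$ handled by \eqref{AmalgamCondition} plus Lemma \ref{BMOandBMOw}, and a piece with $(b_j-\lambda_j)f_j$ inside handled by H\"older with exponents $q_j/s$, $(q_j/s)'$, the weighted John--Nirenberg equivalence for $\omega_j^{1-(q_j/s)'}\in A_\infty$, and the telescoping factor $k\|b_j\|_{BMO}$ (the paper's estimate \eqref{Estimate25}), before concluding as in Theorem \ref{OperatoronAmalgam}(i). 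The polynomial-in-$k$ loss absorbed by the geometric decay is exactly how the paper closes the argument as well.
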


\begin{proof}
For every $j\in\{1,\ldots,m\},$ we write $|\mathcal{T}_{b_j}(\vec{f})(x)|$ as
\begin{align*}
|\mathcal{T}_{b_j}(\vec{f})(x)|&\leq|\mathcal{T}_{b_j}(\vec{f}^{0})(x)|+|b_j(x)-(b_j)_B||\mathcal{T}(\vec{f})^{\vec D}(x)|+|\mathcal{T}(((b_j)_B-b_j)\vec{f}^{\vec D}(x)|\\
&:=I_{j,1}(x)+I_{j,2}(x)+I_{j,3}(x).
\end{align*}
Via the condition that $\mathcal{T}_{\Sigma\vec{b}}$ is bounded from $L^{q_1}(\omega_1)\times\cdots\times L^{q_m}(\omega_m)$ to $L^q(\nu_{\vec{\omega}}),$ it gets
\begin{equation}\label{Estimate22}
\|\mathcal{T}_{\Sigma\vec{b}}(\vec f^{0})\chi_{B(y,r)}\|_{L^q(\nu_{\vec{\omega}})}\leq C\sum_{j=1}^m\|b_j\|_{BMO}\prod_{j=1}^m\|f_j\chi_{B(y,2r)}\|_{L^{q_j}(\omega_j)}.
\end{equation}
For $I_{j,2}(x)$.  By (\ref{AmalgamCondition}), \eqref{Estimate21} and Lemma \ref{BMOandBMOw}, we then have
\begin{equation}\label{Estimate23}
\|I_{j,2}\chi_{B(y,r)}\|_{L^q(\nu_{\vec{\omega}})}
\leq C\|b_j\|_{BMO}\sum_{k=1}^\infty k\bigg(\frac{\nu_{\vec\omega}(B(y,r))}{\nu_{\vec\omega}(2^{k+1}B(y,r))}\bigg)^{\frac1q}\prod_{i=1}^m\|f_j\chi_{2^{k+1}B}\|_{L^{q_j}(\omega_j)}.
\end{equation}
Now it turns to $I_{j,3}(x)$. Noting that for $s<q_j$ and $\omega_j^{1-(\frac {q_j}s)'}\in A_{m\frac {q_j}s},j=1,\cdots,m$, by H\"older's inequality and Lemma \ref{BMOandBMOw}, we have
\begin{align}\label{Estimate25}
\bigg(\frac1{2^{k+1}B}&\int_{2^{k+1}B}|f_j(x)(b_j(x)-(b_j)_B)|^sdx\bigg)^{\frac1s}\\
&\leq Ck\|b_j\|_{BMO}|2^{k+1}B|^{-\frac1q}\bigg(\frac1{|2^{k+1}B|}\int_{2^{k+1}B}\omega_j(z)^{1-(\frac{q_j}{s})'}dz\bigg)^{\frac1{s(\frac{q_j}{s})'}}\|f_j\chi_{2^{k+1}B}\|_{L^{q_j}(\omega_j)}.\nonumber
\end{align}
Then by a similar estimate to \ref{Estimate21}, we then have that
\begin{equation}\label{Estimate24}
\|I_{j,3}\chi_{B(y,r)}\|_{L^q(\nu_{\vec{\omega}})}
\leq C\|b_j\|_{BMO}\sum_{k=1}^\infty k\bigg(\frac{\nu_{\vec\omega}(B(y,r))}{\nu_{\vec\omega}(2^{k+1}B(y,r))}\bigg)^{\frac1q}\prod_{i=1}^m\|f_j\chi_{2^{k+1}B}\|_{L^{q_j}(\omega_j)}.
\end{equation}
Combining \ref{Estimate22}, \ref{Estimate23} and \ref{Estimate24} with the same process of prove part $(i)$ in Theorem \ref{OperatoronAmalgam}, we also have
$$
\|\mathcal{T}_{b_j}(\vec{f})\|_{(L^{q}(\nu_{\vec{\omega}}),L^p)^{\alpha}}
\leq C\|b_j\|_{\rm {BMO}}\prod_{j=1}^m\|f_j\|_{(L^{q_j}(\omega_j),L^{p_j})^{\alpha_j}},
$$
and it completes the proof of the theorem.
\end{proof}

Our results also follow for commutators of iterated commutators defined by
$$
\mathcal T_{\Pi\vec b}(\vec f)=[b_1,[b_2,[\cdots[b_m,\mathcal T]\cdots]]](\vec f).
$$

\begin{theorem}\label{IteratedCommutatoronAmalgam}
Let $1\leq s< q_j \leq\alpha_j<p_j\leq\infty,\ j = 1,\ldots,m$ with
$1/q=\sum_{j=1}^m 1/{q_j}, 1/p=\sum_{j=1}^m 1/{p_j}, 1/{\alpha}=\sum_{j=1}^m 1/{\alpha_m}$
and  $p/{p_j}=q/{q_j}=\alpha/{\alpha_j}$, $\vec\omega\in A_{\vec{Q}/s}\cap (A_{\infty})^m$ and $m$-sublinear operator $\mathcal{T}$ satisfies condition (\ref{AmalgamCondition}) and admits a iterated commutator $T_{\Pi \vec{b}}$. If $\mathcal{T}_{\Pi\vec{b}}$ is bounded from $L^{q_1}(\omega_1)\times\cdots\times L^{q_m}(\omega_m)$ to $L^q(\nu_{\vec{\omega}})$, then
$\mathcal{T}_{\Pi\vec{b}}$ is also bounded from $(L^{q_1}(\omega_1),L^{p_1})^{\alpha_1}\times\cdots\times (L^{q_m}(\omega_m),L^{p_m})^{\alpha_m}$ to $(L^q(\nu_{\vec{\omega}}),L^p)^{\alpha}$.
\end{theorem}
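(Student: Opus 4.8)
The plan is to follow closely the templates of the proofs of Theorems~\ref{OperatoronAmalgam} and~\ref{multilinearCommutatoronAmalgam}; the only genuinely new ingredient is the combinatorial expansion of the iterated commutator relative to a fixed ball. I would fix $B:=B(y,r)$, decompose $f_j=f_j^0+f_j^\infty$ according to the standing notation, and, after writing $b_i(x)-b_i(z)=(b_i(x)-(b_i)_B)-(b_i(z)-(b_i)_B)$ inside every kernel and expanding $\prod_{i=1}^m(b_i(x)-b_i(z))$, record the pointwise bound
\begin{equation*}
|\mathcal{T}_{\Pi\vec b}(\vec g)(x)|\le\sum_{\tau\subseteq\{1,\dots,m\}}\Big(\prod_{i\in\tau}|b_i(x)-(b_i)_B|\Big)\,|\mathcal{T}(\vec h^{\tau})(x)|,
\end{equation*}
where $h_j=g_j$ for $j\in\tau$ and $h_j=(b_j-(b_j)_B)g_j$ for $j\notin\tau$. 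Applying this with $\vec g=\vec f^0$ and with $\vec g=\vec f^{\vec D}$ for every $\vec D\in\mathcal I$ splits $|\mathcal{T}_{\Pi\vec b}(\vec f)(x)|$ into a local piece and finitely many far pieces.

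For the local piece $\mathcal{T}_{\Pi\vec b}(\vec f^0)$ I would use directly the hypothesis that $\mathcal{T}_{\Pi\vec b}$ maps $L^{q_1}(\omega_1)\times\cdots\times L^{q_m}(\omega_m)$ into $L^q(\nu_{\vec\omega})$, which gives $\|\mathcal{T}_{\Pi\vec b}(\vec f^0)\chi_B\|_{L^q(\nu_{\vec\omega})}\le C(\prod_j\|b_j\|_{BMO})\prod_j\|f_j\chi_{2B}\|_{L^{q_j}(\omega_j)}$, the analogue of \eqref{Estimate22}. For each far piece I would insert every inner term $\mathcal{T}(\vec h^{\vec D,\tau})$ (with the slots $j\notin\tau$ modified as above) into the pointwise bound \eqref{AmalgamCondition}. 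On the averages produced by \eqref{AmalgamCondition} I would use, for an unmodified slot $j\in\tau$, the H\"older-and-$A_{\vec Q/s}$ computation already carried out in \eqref{Estimate21}, and, for a modified slot $j\notin\tau$, the $BMO$-weighted estimate \eqref{Estimate25}; each modified slot costs one extra power of $k$ and one factor $\|b_j\|_{BMO}$. When one then takes the $L^q(\nu_{\vec\omega})$-norm over $B$, the outer product $\prod_{i\in\tau}(b_i(\cdot)-(b_i)_B)$ (the $k$-sum being constant in $x$) is controlled by iterating H\"older's inequality and the John--Nirenberg inequality together with Lemma~\ref{BMOandBMOw} --- legitimate because $\vec\omega\in A_{\vec Q/s}$ forces $\nu_{\vec\omega}\in A_{mq/s}\subset A_\infty$ via Lemma~\ref{Multipleweighteqvanlence} --- yielding $\|\prod_{i\in\tau}(b_i-(b_i)_B)\chi_B\|_{L^q(\nu_{\vec\omega})}\le C(\prod_{i\in\tau}\|b_i\|_{BMO})\,\nu_{\vec\omega}(B)^{1/q}$.

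Assembling these estimates exactly as in the passage from \eqref{Estimate21} and \eqref{Estimate25} to \eqref{Estimate211} --- using the reverse doubling of Lemma~\ref{ReHolder} to convert the $k$-sum into one carrying an exponential factor $2^{-ckn}$ with $c>0$, the doubling of Lemma~\ref{doublemeasure}, and the inequality $\nu_{\vec\omega}(B)^{1/\alpha-1/q-1/p}\le C\prod_j\omega_j(B)^{1/\alpha_j-1/q_j-1/p_j}$ (Lemma~\ref{MultiplierweightandWeight} with $p/p_j=q/q_j=\alpha/\alpha_j$) --- I would reach, for every $B=B(y,r)$,
\begin{equation*}
\begin{aligned}
\nu_{\vec\omega}&(B(y,r))^{\frac1\alpha-\frac1q-\frac1p}\|\mathcal{T}_{\Pi\vec b}(\vec f)\chi_{B(y,r)}\|_{L^q(\nu_{\vec\omega})}\\
&\le C\Big(\prod_{j=1}^m\|b_j\|_{BMO}\Big)\sum_{k=0}^\infty k^{m+1}2^{-ckn}\prod_{j=1}^m\omega_j(B(y,2^{k+1}r))^{\frac1{\alpha_j}-\frac1{q_j}-\frac1{p_j}}\|f_j\chi_{2^{k+1}B}\|_{L^{q_j}(\omega_j)}.
\end{aligned}
\end{equation*}
From here I would repeat verbatim the last part of the proof of Theorem~\ref{OperatoronAmalgam}: take the $L^p$-norm in $y$, split the product by H\"older in $y$ with exponents $p_j/p\ge1$ so that each factor is dominated by $\|f_j\|_{(L^{q_j}(\omega_j),L^{p_j})^{\alpha_j}}$, sum the convergent series $\sum_k k^{m+1}2^{-ckn}$, and finally take the supremum over $r>0$; the case $p=\infty$ is identical with the essential supremum replacing the $L^p$-integral.

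The step I expect to be the real (if still routine) obstacle is the bookkeeping in the far-piece estimate: keeping track of the several $BMO$ factors and, above all, of the polynomial growth in $k$. Each of the (at most $m$) commutator slots pushed inside $\mathcal{T}$ contributes a factor $k$ --- coming from replacing $(b_j)_{2^{k+1}B}$ by $(b_j)_B$ --- on top of the single $k$ already present in \eqref{AmalgamCondition}, so one must check that $k^{m+1}$ is still absorbed by the exponential decay supplied by the reverse H\"older / reverse doubling property; this holds since $\sum_{k\ge0}k^{m+1}2^{-ckn}<\infty$ for every $c>0$. A secondary point requiring care is the $L^q(\nu_{\vec\omega})$-control of products of $BMO$-oscillations, which uses the iterated rather than the scalar John--Nirenberg inequality. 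No weak-type argument is needed here, since the hypothesis $s<q_j$ makes every $q_j>1$.
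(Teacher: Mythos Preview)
Your proposal is correct and follows essentially the same route as the paper. The paper restricts to $m=2$ and writes out the four ``far'' terms $II_2,\ldots,II_5$ explicitly, which are precisely the cases $\tau=\{1,2\},\{1\},\{2\},\emptyset$ of your general combinatorial expansion; the only presentational difference is that the paper controls $\|\prod_{i\in\tau}(b_i-(b_i)_B)\chi_B\|_{L^q(\nu_{\vec\omega})}$ by splitting $\nu_{\vec\omega}=\prod_j\omega_j^{q/q_j}$ via H\"older into $L^{q_j}(\omega_j)$-norms (then Lemma~\ref{BMOandBMOw}), whereas you invoke John--Nirenberg directly with $\nu_{\vec\omega}\in A_\infty$---both arguments are standard and yield the same bound.
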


\begin{proof}
Without loss of generalization, we only consider the case $m=2$. Write
\begin{align*}
|\mathcal{T}_{\Pi\vec{b}}(\vec{f})(x)|&\leq|\mathcal{T}_{\Pi\vec{b}}(f_1^0,f_2^0)(x)|+\sum_{(d_1,d_2)\in\mathcal I}\bigg\{|(b_1(x)-(b_1)_B)(b_2(x)-(b_2)_B)||\mathcal{T}(f_1^{d_1},f_2^{d_2})(x)|\\
&\hspace*{12pt}+|b_1(x)-(b_1)_B)||\mathcal{T}(f_1^{d_1},((b_2)_B-b_2)f_2^{d_2})(x)|
+|b_2(x)-(b_2)_B||\mathcal{T}(((b_1)_B-b_1)f_1^{d_1},f_2^{d_2})(x)|\\
&\hspace*{12pt}+|\mathcal{T}(((b_1)_B-b_1)f_1^{d_1},((b_2)_B-b_2)f_2^{d_2})(x)|\bigg\}\\
&:=II_1(x)+II_2(x)+II_3(x)+II_4(x)+II_5(x).
\end{align*}
Since $\mathcal{T}_{\Pi\vec{b}}$ is bounded from $L^{q_1}(\omega_1)\times\cdots\times L^{q_m}(\omega_m)$ to $L^q(\nu_{\vec{\omega}}),$ we have
$$
\|II_1\chi_{B(y,r)}\|_{L^q(\nu_{\vec{\omega}})}
\leq C\prod_{j=1}^2\|b_j\|_{BMO}\|\prod_{j=1}^2\|f_j\chi_{B(y,2r)}\|_{L^{q_j}(\omega_j)}.
$$
For $II_2(x).$ Using (\ref{AmalgamCondition}), H\"older's inequality and Lemma \ref{MultiplierweightandWeight}, we obtain that by
\begin{align*}
\|II_{2}(\cdot)&\chi_{B(y,r)}\|_{L^q(\nu_{\vec{\omega}})}\\
&\leq C\bigg(\int_B|b_1(x)-(b_1)_B|^{q}\omega_1(x)^{q/{q_1}}|b_2(x)-(b_2)_B|^{q}\omega_2(x)^{q/{q_2}}dx\bigg)^{1/q}\\
&\hspace*{12pt}\times\sum_{k=1}^\infty k\prod_{i=1}^2\bigg(\frac{1}{|2^{k+1}B|}\int_{2^{k+1}B}|f_i(z)|^{q_i}\omega_i(z)dz\bigg)^{1/{q_i}}
\bigg(\frac{1}{|2^{k+1}B|}\int_{2^{k+1}B}\omega_i(z)^{1-\big(\frac{q_i}{s_i}\big)'}\bigg)^{\frac{1}{s_i}-\frac{1}{q_i}}\\
&\leq C\prod_{i=1}^2\bigg(\int_B|b_i(x)-(b_i)_B|^{q_i}\omega_i(x)dx\bigg)^\frac1{q_i}\sum_{k=1}^\infty k\bigg(\frac{\nu_{\vec\omega}(B(y,r))}{\nu_{\vec\omega}(2^{k+1}B(y,r))}\bigg)^{\frac1q}\prod_{i=1}^m\|f_j\chi_{2^{k+1}B}\|_{L^{q_j}(\omega_j)}\\
&\leq C_1\prod_{i=1}^2\|b_i\|_{BMO}
\sum_{k=1}^\infty k\bigg(\frac{\nu_{\vec\omega}(B(y,r))}{\nu_{\vec\omega}(2^{k+1}B(y,r))}\bigg)^{\frac1q}\prod_{i=1}^m\|f_j\chi_{2^{k+1}B}\|_{L^{q_j}(\omega_j)}.
\end{align*}
Next we estimate $II_5(x)$. Via \eqref{AmalgamCondition} and \eqref{Estimate25}, we obtain
\begin{align*}
II_5(x)&\leq C\sum_{k=1}^\infty k\prod_{i=1}^2\bigg(\frac{1}{2^{k+1}B}\int_{2^{k+1}B}|(b_i(z)-(b_i)_B)f_i(z)|^{s}\bigg)^{\frac1s}\\
&\leq C\sum_{k=1}^\infty k^3|2^{k+1}B|^{-\frac1q}\prod_{i=1}^2
\|b_i\|_{BMO}\bigg(\frac1{|2^{k+1}B|}\int_{2^{k+1}B}\omega_i(z)^{1-(\frac{q_i}{s})'}dz\bigg)^{\frac1{s(\frac{q_i}{s})'}}\|f_i\chi_{2^{k+1}B}\|_{L^{q_i}(\omega_i)}\\
& \leq C\sum_{k=1}^\infty k^3\nu_{\vec \omega}(2^{k+1}B)^{-\frac1q}\prod_{i=1}^2
\|b_i\|_{BMO}\|f_i\chi_{2^{k+1}B}\|_{L^{q_i}(\omega_i)}
\end{align*}
We then have
\begin{align*}
\|II_{5}(\cdot)\chi_{B(y,r)}\|_{L^q(\nu_{\vec{\omega}})}\leq C_1\prod_{i=1}^2\|b_i\|_{BMO}
\sum_{k=1}^\infty k\bigg(\frac{\nu_{\vec\omega}(B(y,r))}{\nu_{\vec\omega}(2^{k+1}B(y,r))}\bigg)^{\frac1q}\prod_{i=1}^m\|f_j\chi_{2^{k+1}B}\|_{L^{q_j}(\omega_j)}.
\end{align*}
Since there is the same method to deal with the terms $II_3(x)$ and $II_4(x),$ we only address $II_3(x).$  First, we have a point estimate
\begin{align*}
II_3(x)&\leq C_1|b_1(x)-(b_1)_B|\\
&\hspace*{12pt}\times\sum_{k=1}^\infty k\bigg(\frac{1}{|2^{k+1}B|}\int_{2^{k+1}B}|f_1(z)|^{s}\bigg)^{\frac1s}\bigg(\frac{1}{|2^{k+1}B|}\int_{2^{k+1}B}\big|f_2(z)(b_2(z)-(b_2)_B)\big|^{s}\bigg)^{\frac1s}.
\end{align*}
Using H\"older's inequality and \eqref{Estimate25}, we get
\begin{align*}
\|II_{3}(\cdot)\chi_{B(y,r)}\|_{L^q(\nu_{\vec{\omega}})}\leq C_1\prod_{i=1}^2\|b_i\|_{BMO}
\sum_{k=1}^\infty k\bigg(\frac{\nu_{\vec\omega}(B(y,r))}{\nu_{\vec\omega}(2^{k+1}B(y,r))}\bigg)^{\frac1q}\prod_{i=1}^m\|f_j\chi_{2^{k+1}B}\|_{L^{q_j}(\omega_j)}.
\end{align*}
Then we can complete this theorem by a similar proof of part (i) of Theorem \ref{OperatoronAmalgam}.
\end{proof}




\section{Some applications \label{s3}}
\hskip\parindent
We will apply our theorems in section \ref{s2} to multilinear Littlewood-Paley functions and Marcinkiewicz functions of convolution and non-convolution type, respectively.

\subsection{ Multilinear Littlewood-Paley functions \label{s3.1}}
\hskip\parindent

\textbf{I: Convolution Type}
\hskip\parindent

In \cite{XPY2015}, Xue, Peng and Yabuta introduced the multilinear Littlewood-Paley kernel as follows.
\begin{definition}\label{multilinearstandardkernel}
we say that a function $\psi$ defined on on $(\mathbb R^n)^m$ is a multilinear standard kernel, if it satisfies the following two conditions:
\begin{itemize}
\item[(i)] Size condition: for any $(y_1,\cdots, y_m)\in (\mathbb R^n)^m,$ there is a constant $C_1> 0,$ such
that
\begin{align*}
|\psi(y_1,\cdots,y_m)|\leq \frac{C_1}{(1+\sum_{j=1}^m|y_j|)^{mn+\delta}},
\end{align*}
for some $\delta> 0.$

\item[(i)] Smoothness condition: there exists a constant $C_2> 0,$ such that for some $\gamma>0$
\begin{align*}
|\psi(y_1,\cdots,y_j+z,\cdots,y_m)-\psi(y_1,\cdots,y_j,\cdots,y_m)|\leq \frac{C_2|z|^\gamma}{(1+|y_1|+\cdots+|y_m|)^{mn+\delta+\gamma}},
\end{align*}
whenever $2|z|\leq\max_{j=1,\cdots,m}|y_j|$ and $(y_1,\cdots,y_m)\in(\mathbb R^n)^m$.
\end{itemize}
For any $\vec f=(f_1,\cdots,f_m)\in\mathcal S(\mathbb R^n)\times\cdots\times\mathcal S(\mathbb R^n)$ and $x\notin\bigcap_{j=1}^m\text{supp}f_j$, we can define the multilinear Littlewood-Paley g-function by
where
\begin{align}\label{MultilinearLittlewood-Paleyg}
g(\vec f)(x)=\bigg(\int_0^\infty|\psi_t*\vec f(x)|^2\frac{dt}{t}\bigg)^{\frac{1}{2}}
\end{align}
where $\psi$ is the multilinear standard kernel with $\psi_t(y_1,\cdots,y_m)=\frac{1}{t^{mn}}\psi(\frac{y_1}{t},\cdots,\frac{y_m}{t})$.
\end{definition}

As pointed out that the bilinear Littlewood-Paley $g$-function can be
rewritten as a 4-linear Fourier multiplier with symbol $m(\xi,\eta)=\int_0^\infty \hat{\psi}(t\xi)\hat\psi(t\eta)\frac{dt}t$ in \cite{XPY2015}, and by the results of Grafakos-Miyachi-Tomita \cite{GMT2013} and the assumption of the sufficient
smooth kernel, we know that the bilinear Littlewood-Paley $g$-function is bounded from $L^{q_1}\times L^{q_2}\rightarrow L^q$ with $1<q_1,q_2<\infty$ and $\frac1q=\frac1{q_1}+\frac1{q_2}.$ So, in this paper, we assume that $g$ can be extended to be a bounded operator
for some $1\leq q_1,\cdots,q_m\leq\infty $ with $\frac1q =\frac1{q_1}+\cdots+\frac1{q_m},$ that is
\begin{align*}
g:L^{q_1}\times\cdots\times L^{q_m}\rightarrow L^q.
\end{align*}
Xue, Peng and Yabuta \cite{XPY2015} obtained the boundedness of multilinear littlewood-Paley $g$-function on weighted Lebesgue space as follows.
\begin{lemma}\label{Littlewood-PaleygLebesgue}(\cite{XPY2015})
Assume that $g$ can be extended to a bounded operator for some $1\leq s_1,\ldots,s_m<\infty$ with $1/s=\sum_{j=1}^m1/{s}$. Let $1\leq q_j <\infty,\ j = 1,\ldots,m$ with $1/q=\sum_{j=1}^m 1/{q_j}$, $\vec\omega\in A_{\vec{Q}}$ and $\vec{b}\in BMO(\mathbb{R}^n)$,
then
\begin{itemize}
\item[(i)]
if $1\leq q_1,\ldots,q_m<\infty$, $g$ is a bounded operators from $L^{q_1}(\omega_1)\times\cdots\times L^{q_m}(\omega_m)$ to $L^q(\nu_{\vec{\omega}})$ or $L^{q,\infty}(\nu_{\vec{\omega}})$;
\item[(ii)]if $1< q_1,\ldots,q_m<\infty$, $g_{\Sigma\vec b}$ and $g_{\Pi\vec{b}}$ are a bounded operators from $L^{q_1}(\omega_m)\times\cdots\times L^{q_m}(\omega_m)$ to $L^q(\nu_{\vec{\omega}})$.
\end{itemize}
\end{lemma}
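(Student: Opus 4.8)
The plan is to realize $g$ as a multilinear Calder\'on--Zygmund operator whose kernel takes values in the Hilbert space $\mathbb H=L^2((0,\infty),dt/t)$, and then to invoke the multilinear weighted theory with $A_{\vec Q}$ weights developed in \cite{LOPTT2009}. Concretely, set
$$
\vec K(x,y_1,\dots,y_m)=\Big\{\tfrac1{t^{mn}}\psi\big(\tfrac{x-y_1}{t},\dots,\tfrac{x-y_m}{t}\big)\Big\}_{t>0},
$$
regarded as an $\mathbb H$-valued function, so that $g(\vec f)(x)=\big\|\int_{(\mathbb R^n)^m}\vec K(x,\vec y)f_1(y_1)\cdots f_m(y_m)\,d\vec y\big\|_{\mathbb H}=\|T\vec f(x)\|_{\mathbb H}$ for the associated $\mathbb H$-valued multilinear operator $T$. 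The first step is to verify that $\vec K$ satisfies the multilinear Calder\'on--Zygmund kernel estimates in the $\mathbb H$-valued sense: a size bound $\|\vec K(x,\vec y)\|_{\mathbb H}\lesssim\big(\sum_{j=1}^m|x-y_j|\big)^{-mn}$, obtained from the size condition on $\psi$ by computing $\int_0^\infty t^{-2mn}\big(1+t^{-1}\sum_j|x-y_j|\big)^{-2(mn+\delta)}\,\tfrac{dt}t$ and splitting the $t$-integral at $t\sim\sum_j|x-y_j|$ (the hypothesis $\delta>0$ makes both pieces converge with the correct homogeneity), and a H\"older-type regularity bound obtained the same way from the $\gamma$-smoothness of $\psi$, yielding some positive smoothness exponent $\gamma'$. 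Combined with the a priori boundedness $g:L^{s_1}\times\cdots\times L^{s_m}\to L^{s}$ assumed in the statement (equivalently $T:L^{s_1}\times\cdots\times L^{s_m}\to L^{s}(\mathbb H)$), this places $g$ inside the multilinear Calder\'on--Zygmund framework of Grafakos--Torres \cite{GT12002,GT22002}.

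Granting this, part (i) follows from the multiple-weight theory. One first proves the grand maximal function estimate $M^{\#}_{\delta}\big(g(\vec f)\big)(x)\lesssim\mathcal M(\vec f)(x)$ for $0<\delta<1/m$, where $\mathcal M$ is the multi(sub)linear maximal operator; this is exactly the argument for multilinear CZ operators, the vector-valued structure entering only through the kernel estimates established above. The Fefferman--Stein inequality together with the $A_{\vec Q}$-boundedness of $\mathcal M$ then yields the strong bound $L^{q_1}(\omega_1)\times\cdots\times L^{q_m}(\omega_m)\to L^{q}(\nu_{\vec\omega})$ for $1<q_j<\infty$; when some $q_j=1$ one instead uses the weak $(1,\dots,1)$-type bound for $\mathcal M$ and a Calder\'on--Zygmund decomposition to obtain the $L^{q,\infty}(\nu_{\vec\omega})$ endpoint.

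For part (ii) the plan is to run the sharp maximal function argument for commutators in the spirit of P\'erez--Pradolini--Torres--Trujillo-Gonz\'alez \cite{PPTT2014}. For $\vec b\in BMO^m$ and $0<\delta<\varepsilon<1/m$ one establishes
$$
M^{\#}_{\delta}\big(g_{\Sigma\vec b}(\vec f)\big)(x)\lesssim\sum_{j=1}^m\|b_j\|_{BMO}\Big(\mathcal M(\vec f)(x)+M_{\varepsilon}\big(g(\vec f)\big)(x)\Big),
$$
together with the analogous iterated estimate for $g_{\Pi\vec b}$, in which a telescoping sum over subsets $\sigma\subset\{1,\dots,m\}$ of terms $M_{\varepsilon}\big(g_{\sigma}(\vec f)\big)$ (with $g_{\sigma}$ the commutator taken in the variables indexed by $\sigma$) appears, each weighted by $\prod_{j\in\sigma^c}\|b_j\|_{BMO}$. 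Feeding in part (i) and the $A_{\vec Q}$-boundedness of $\mathcal M$ and of $M_{\varepsilon}$ on $L^{q}(\nu_{\vec\omega})$ then gives the weighted boundedness of $g_{\Sigma\vec b}$ and $g_{\Pi\vec b}$ for $1<q_j<\infty$.

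The main obstacle is the first step: checking that the $\mathbb H$-valued kernel $\vec K$ genuinely satisfies the multilinear CZ size and regularity estimates. The conditions on $\psi$ are pointwise in the rescaled variables, so the real work is the $t$-integration --- splitting $(0,\infty)$ at $t\sim\sum_j|x-y_j|$ (and, for the regularity bound, also tracking the range of $t$ compatible with the constraint $2|z|\le\max_j|y_j|$), and using $\delta,\gamma>0$ to force absolute convergence at both ends while producing the homogeneity $-mn$ in the size bound and a genuine positive H\"older exponent in the regularity bound. Once this bookkeeping is in place, the remaining steps are direct invocations of the established multilinear and multilinear-weighted Calder\'on--Zygmund theory.
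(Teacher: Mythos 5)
This statement is not proved in the paper at all: it is imported verbatim as Lemma~\ref{Littlewood-PaleygLebesgue} from \cite{XPY2015}, and the present authors only use it as a black box. Your proposal is essentially a reconstruction of the proof in that reference: view $g$ as an $L^2(dt/t)$-valued multilinear Calder\'on--Zygmund operator, check the vector-valued size and H\"older kernel estimates by splitting the $t$-integral at $t\sim\sum_j|x-y_j|$ (this is where $\delta,\gamma>0$ enter), and then run the sharp maximal function argument of \cite{LOPTT2009,PPTT2014} with the multilinear maximal operator to get the $A_{\vec Q}$-weighted strong and weak bounds, and the commutator variant for part (ii). One technical correction: in the commutator sharp-function estimate the term $\mathcal M(\vec f)$ should be $\mathcal M_{L(\log L)}(\vec f)$ (equivalently one may use $\mathcal M_r$ with $r>1$); the pointwise bound with the plain multilinear maximal function is not available, but this does not affect the conclusion, since for $\vec\omega\in A_{\vec Q}$ with all $q_j>1$ one can choose $r>1$ with $\vec\omega\in A_{\vec Q/r}$ and bound $\mathcal M_{L(\log L)}\le C\,\mathcal M_r$, which is bounded on the weighted product spaces. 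With that adjustment your outline is the standard and correct route to the cited result.
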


We get the boundedness of multilinear littlewood-Paley $g$-function on weighted Amalgam space as follows.
\begin{theorem}\label{Littlewood-PaleygAmalgam}
Let $1\leq s\leq q_j \leq\alpha_j<p_j\leq\infty,\ j = 1,\ldots,m$ with
$1/q=\sum_{j=1}^m 1/{q_j}, 1/p=\sum_{j=1}^m 1/{p_j}, 1/{\alpha}=\sum_{j=1}^m 1/{\alpha_m}$
and  $p/{p_j}=q/{q_j}=\alpha/{\alpha_j}$, $\vec{\omega}\in A_{\vec{Q}\cap (A_{\infty})^m}$ and $g$ is Littlewood-Paley $g$-function.
Then
\begin{itemize}
\item[(i)]
$g$ is a bounded operator from $(L^{q_1}(\omega_1),L^{p_1})^{\alpha_1}\times\cdots\times (L^{q_m}(\omega_m),L^{p_m})^{\alpha_m}$ to $(L^q(\nu_{\vec{\omega}}),L^p)^{\alpha}$ or $(L^{q,\infty}(\nu_{\vec{\omega}}),L^p)^{\alpha}$.
\item[(ii)] $g_{\Sigma\vec b}$ and $g_{\Pi\vec{b}}$ are bounded operators from $(L^{q_1}(\omega_1),L^{p_1})^{\alpha_1}\times\cdots\times (L^{q_m}(\omega_m),L^{p_m})^{\alpha_m}$ to $(L^q(\nu_{\vec{\omega}}),L^p)^{\alpha}.$
\end{itemize}
\end{theorem}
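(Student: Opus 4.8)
The plan is to deduce Theorem \ref{Littlewood-PaleygAmalgam} from the abstract machinery of Section \ref{s2}, namely Theorems \ref{OperatoronAmalgam}, \ref{multilinearCommutatoronAmalgam} and \ref{IteratedCommutatoronAmalgam}, together with the weighted Lebesgue bounds recorded in Lemma \ref{Littlewood-PaleygLebesgue}. The only thing that really needs to be checked is that the multilinear Littlewood-Paley $g$-function (and, for part (ii), its commutators in the sense that they fall under the scope of Theorems \ref{multilinearCommutatoronAmalgam} and \ref{IteratedCommutatoronAmalgam}) satisfies the pointwise decay condition \eqref{AmalgamCondition}. Once that is established, part (i) is immediate: $g$ is sublinear, Lemma \ref{Littlewood-PaleygLebesgue}(i) gives the hypothesis ``$g$ bounded from $L^{q_1}(\omega_1)\times\cdots\times L^{q_m}(\omega_m)$ to $L^q(\nu_{\vec\omega})$'' when all $q_j>1$ and to $L^{q,\infty}(\nu_{\vec\omega})$ when some $q_j=1$, so Theorem \ref{OperatoronAmalgam}(i)--(ii) applies verbatim and yields the asserted boundedness into $(L^q(\nu_{\vec\omega}),L^p)^\alpha$ or $(L^{q,\infty}(\nu_{\vec\omega}),L^p)^\alpha$. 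Part (ii) then follows in the same way from Lemma \ref{Littlewood-PaleygLebesgue}(ii) combined with Theorems \ref{multilinearCommutatoronAmalgam} and \ref{IteratedCommutatoronAmalgam}, since $g_{\Sigma\vec b}$ and $g_{\Pi\vec b}$ satisfy the same condition \eqref{AmalgamCondition} (the commutator structure only introduces extra $\|b_j\|_{BMO}$ factors and powers of $k$, which are harmless in \eqref{AmalgamCondition} as written).

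So the heart of the matter is verifying \eqref{AmalgamCondition} for $g$ with $\vec D=(d_1,\dots,d_m)\in\mathcal I$, $\sigma=\{i:d_i=\infty\}\neq\emptyset$. Fix a ball $B=B(x_0,r)$ and $x\in B$. Write $g(\vec f^{\vec D})(x)=\big(\int_0^\infty|\psi_t*\vec f^{\vec D}(x)|^2\,\tfrac{dt}{t}\big)^{1/2}$, where $\psi_t*\vec f^{\vec D}(x)=\int_{(\mathbb R^n)^m}\psi_t(x-y_1,\dots,x-y_m)\prod_{j=1}^m f_j^{d_j}(y_j)\,d\vec y$. For $j\in\sigma^c$ the variable $y_j$ ranges over $2B$, so $|x-y_j|\lesssim r$; for $j\in\sigma$, $y_j\in(2B)^c$, and I split this region into the annuli $2^{k+1}B\setminus 2^kB$, $k\geq 1$, on which $|x-y_j|\approx 2^k r$. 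On the piece where $y_i\in 2^{k_i+1}B\setminus 2^{k_i}B$ for $i\in\sigma$, let $k=\max_{i\in\sigma}k_i$; then $1+\sum_j|x-y_j|\gtrsim 2^k r$, so the size condition gives $|\psi_t(x-y_1,\dots,x-y_m)|\lesssim t^{\delta}/(t+2^k r)^{mn+\delta}$ after absorbing the $t^{-mn}$. Integrating $|\psi_t*\vec f^{\vec D}(x)|^2$ in $t$ over $(0,\infty)$ with the weight $dt/t$ — the elementary integral $\int_0^\infty t^{2\delta-1}/(t+2^kr)^{2(mn+\delta)}\,dt \approx (2^k r)^{-2mn}$ — produces, after taking square roots and using $\prod_{j}\int|f_j^{d_j}(y_j)|\,dy_j\lesssim |2^{k+1}B|^m\prod_{j\in\sigma^c}\big(\fint_{2B}|f_j|\big)\prod_{i\in\sigma}\big(\fint_{2^{k_i+1}B}|f_i|\big)$ (Hölder in each factor to pass from the $L^1$-average to the $L^s$-average), a bound of the form $\sum_{k\geq1}(2^k r)^{-mn}|2^{k+1}B|^{|\sigma^c|}\cdots$; collecting the $2^{-nk|\sigma^c|}$ gain, the number of ways to write $k=\max k_i$ contributes a harmless polynomial factor in $k$, and one arrives exactly at the right-hand side of \eqref{AmalgamCondition}. (The smoothness condition is not needed here, only for the commutator refinements where one already knows from \cite{XPY2015} that the analogous pointwise estimate holds.)

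The main obstacle, and the only place requiring genuine care, is the bookkeeping in the previous step: controlling the $t$-integral uniformly in $k$ so that the decay $2^{-nk|\sigma^c|}$ with the extra $k$ weight emerges with the correct exponent, and ensuring that the passage from $L^1$ to $L^s$ averages (which needs $s\geq1$, consistent with the hypothesis $1\leq s\leq q_j$) does not destroy the normalization of the averages over $2^{k+1}B$ versus $2^kB$. This is entirely parallel to the pointwise estimates already carried out in \cite{XPY2015} for the weighted Lebesgue theory and in \cite{F2014,WT2013} in the linear amalgam setting, so I would cite those for the routine parts and only spell out the multilinear annular decomposition. Once \eqref{AmalgamCondition} is in hand, the theorem is a direct application of the results of Section \ref{s2}, and the strong/weak-type dichotomy in part (i) matches the dichotomy in Theorem \ref{OperatoronAmalgam} according to whether every $q_j>1$ or some $q_j=1$.
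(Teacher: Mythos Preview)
Your proposal is correct and follows essentially the same route as the paper: reduce to Theorems \ref{OperatoronAmalgam}, \ref{multilinearCommutatoronAmalgam}, \ref{IteratedCommutatoronAmalgam} via Lemma \ref{Littlewood-PaleygLebesgue}, and verify the pointwise condition \eqref{AmalgamCondition} for $g$ using Minkowski's inequality, the size estimate on $\psi$, the annular decomposition of $(2B)^c$, and the elementary computation $\int_0^\infty t^{2\delta-1}(t+2^k r)^{-2(mn+\delta)}\,dt\approx (2^k r)^{-2mn}$. One small correction: for part (ii) the hypotheses of Theorems \ref{multilinearCommutatoronAmalgam} and \ref{IteratedCommutatoronAmalgam} require only that $g$ itself satisfy \eqref{AmalgamCondition} (together with the weighted Lebesgue boundedness of $g_{\Sigma\vec b}$, $g_{\Pi\vec b}$), not that the commutators satisfy it; the extra $\|b_j\|_{BMO}$ and $k$ factors are produced inside those theorems, so your parenthetical remark is unnecessary.
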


\begin{proof}
By Theorem \ref{OperatoronAmalgam}, \ref{multilinearCommutatoronAmalgam} and \ref{IteratedCommutatoronAmalgam}, it suffices to verify that $g(\vec{f})(x)$ satisfies the condition \eqref{AmalgamCondition}. For any ball $B=B(y,r)\subset R^n$, we slip $f_j=f_j^0+f_j^\infty:=f_j\chi_{2B}+f_j\chi_{(2B)^c},j=1,\cdots,m$. Applying the size condition of multilinear standard kernel and Minkowshi's inequality yields
\begin{align*}
|g(\vec{f}^D)(x)|
&\leq C\bigg(\int_0^\infty\bigg|\int_{(\mathbb R^n)^m}\frac{t^\delta}{(t+\sum_{j=1}^m|x-y_j|)^{mn+\delta}}\prod_{j=1}^m|f_{j}^{d_j}(y_j)|dy_j\bigg|^2\frac{dt}{t}\bigg)^{\frac12}\\
&\leq \int_{(\mathbb{R}^n)^m}\bigg(\int_0^\infty\frac{t^{2\delta}}{(t+\sum_{j=1}^m|x-y_j|)^{2mn+2\delta}}\frac{dt}{t}\bigg)^{\frac{1}{2}}
\prod_{j=1}^m|f_{j}^{d_j}(y_j)|d\vec{y},
\end{align*}
for $x\in B$.

Since $x\in B(y,r)$ and there is $d_j=\infty,$ then we have $|x-y_j|\geq 2^{k-1}r$ for any $y_j\in2^{k+1}B\backslash 2^kB$. Therefore
\begin{align*}
|g(\vec{f}^D)(x)|
&\leq C\prod_{j\in\sigma^c}\int_{2B}|f_{j}(y_j)|dy_j\prod_{j\in\sigma}\sum_{k=1}^\infty\int_{2^{k+1}B\backslash 2^kB}
\bigg(\int_0^\infty\frac{t^{2\delta}}{(t+2^{k-1}r)^{2(mn+\delta)}}\frac{dt}{t}\bigg)^{\frac12}\\
&\leq C\prod_{j\in\sigma^c}\int_{2B}|f_{j}(y_j)|dy_j\prod_{j\in\sigma}\sum_{k=1}^\infty\int_{2^{k+1}B\backslash 2^kB}|f_{j}(y_j)|dy_j
\bigg(\int_0^{2^{k-1}r}\frac{t^{2\delta-1}}{(2^{k-1}r)^{2(mn+\delta)}}dt\bigg)^{\frac12}\\
&\hspace*{12pt}+C\prod_{j\in\sigma^c}\int_{2B}|f_{j}(y_j)|dy_j\prod_{j\in\sigma}\sum_{k=1}^\infty\int_{2^{k+1}B\backslash 2^kB}|f_{j}(y_j)|dy_j
\bigg(\int_{2^{k-1}r}^\infty\frac{t^{2\delta-1}}{t^{2(mn+\delta)}}dt\bigg)^{\frac12}\\
&\leq C\prod_{j\in\sigma^c}\int_{2B}|f_{j}(y_j)|dy_j\prod_{j\in\sigma}\sum_{k=1}^\infty\frac{1}{(2^kr)^{mn}}\int_{2^{k+1}B\backslash 2^kB}|f_{j}(y_j)|dy_j\\
&\leq C\prod_{j\in\sigma^c}\frac{1}{|2B|}\int_{2B}|f_{j}(y_j)|dy_j\prod_{j\in\sigma}
\sum_{k=1}^\infty\frac{1}{2^{kn|\sigma^c|}}\frac{1}{|2^{k+1}B|}\int_{2^{k+1}B\backslash 2^kB}|f_{j}(y_j)|dy_j,
\end{align*}
which finish the proof of Theorem \ref{Littlewood-PaleygAmalgam}.
\end{proof}

In \cite{CXY2015}, Chen, Xue and Yabuta introduced the corresponding  multilinear area integral $S$ of Lusin  as follows:
\begin{align}\label{areaintegral}
S(\vec f)(x)=\bigg(\int\int_{\Gamma_\eta(x,t)}|\psi_t*\vec f(z)|^2\frac{dzdt}{t^{n+1}}\bigg)^{\frac{1}{2}},
\end{align}
where $\Gamma_\eta(x,t)=\{(z,t)\in\mathbb{R}^{n+1}:|z-x|<\eta t\}$ is a cone with vertex $x$ for some fixed $\eta>0$.
\begin{lemma}\label{areaintegralLebesgue}\cite{CXY2015}
Assume that $S$ can be extended to a bounded operator for some $1\leq s_1,\ldots,s_m<\infty$ with $1/s=\sum_{j=1}^m1/{s}$.
Let $1\leq q_j <\infty,\ j = 1,\ldots,m$ with
$1/q=\sum_{j=1}^m 1/{q_j}$ and $\vec\omega\in A_{\vec{Q}}$ ,
then $S$ is a bounded operators from $L^{q_1}(\omega_1)\times\cdots\times L^{q_m}(\omega_m)$ to $L^q(\nu_{\vec{\omega}})$ or $L^{q,\infty}(\nu_{\vec{\omega}})$.
\end{lemma}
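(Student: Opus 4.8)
The plan is to realise the area integral $S$ as the norm of a Hilbert-space–valued $m$-linear operator and then appeal to the weighted multilinear Calder\'on--Zygmund theory. Put $\Gamma=\{(u,t)\in\mathbb R^{n}\times(0,\infty):|u|<\eta t\}$ and let $\mathcal H=L^2\big(\Gamma,\frac{du\,dt}{t^{n+1}}\big)$. Changing variables $z=x+u$ in \eqref{areaintegral}, one sees that $S(\vec f)(x)=\|\mathbf T(\vec f)(x)\|_{\mathcal H}$, where $\mathbf T$ is the $\mathcal H$-valued $m$-linear operator
\[
\mathbf T(\vec f)(x)=\Big\{\,\psi_t*\vec f(x+u)\,\Big\}_{(u,t)\in\Gamma}=\Big\{\,\int_{(\mathbb R^n)^m}\psi_t(x+u-y_1,\dots,x+u-y_m)\prod_{j=1}^m f_j(y_j)\,d\vec y\,\Big\}_{(u,t)\in\Gamma},
\]
with $\mathcal H$-valued kernel $\mathbf K(x,\vec y)=\{\psi_t(x+u-y_1,\dots,x+u-y_m)\}_{(u,t)\in\Gamma}$. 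Boundedness of $S$ between scalar Lebesgue spaces is then literally the same as boundedness of $\mathbf T$ between the corresponding Lebesgue spaces of $\mathcal H$-valued functions, and the unweighted hypothesis on $S$ says precisely that $\mathbf T:L^{s_1}\times\cdots\times L^{s_m}\to L^s$.

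Next I would check that $\mathbf K$ is an $m$-linear standard kernel with values in $\mathcal H$. For the size bound, the size condition on $\psi$ gives $|\psi_t(x+u-y_1,\dots,x+u-y_m)|\lesssim t^{\delta}\big(t+\sum_{j}|x-y_j|\big)^{-(mn+\delta)}$ whenever $|u|<\eta t$; integrating its square against $\frac{du\,dt}{t^{n+1}}$ over $\Gamma$ — the $u$-integral produces a factor $\simeq t^{n}$ and the remaining $t$-integral is the elementary split at $t\simeq\sum_j|x-y_j|$ already carried out in the proof of Theorem~\ref{Littlewood-PaleygAmalgam} — yields $\|\mathbf K(x,\vec y)\|_{\mathcal H}\lesssim\big(\sum_j|x-y_j|\big)^{-mn}$. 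For the regularity bounds in $x$ and in each $y_\ell$, one applies the smoothness condition on $\psi$ in the relevant slot and repeats the same $\mathcal H$-norm computation; this gives, e.g., $\|\mathbf K(x,\vec y)-\mathbf K(x',\vec y)\|_{\mathcal H}\lesssim |x-x'|^{\gamma}\big(\sum_j|x-y_j|\big)^{-(mn+\gamma)}$ provided $2|x-x'|\le\max_j|x-y_j|$, and analogously in the $y_\ell$ variables.

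With these kernel estimates, $\mathbf T$ is an $\mathcal H$-valued $m$-linear Calder\'on--Zygmund operator (the modulus $t\mapsto t^{\gamma}$ is trivially Dini), and the unweighted $L^{s_1}\times\cdots\times L^{s_m}\to L^{s}$ bound upgrades via the multilinear Calder\'on--Zygmund decomposition to the weak endpoint $\mathbf T:L^{s_1}\times\cdots\times L^{s_m}\to L^{s,\infty}$. At this stage the multilinear weighted machinery of \cite{LOPTT2009}, in its Banach-space–valued form (as used in \cite{CXY2015}) — or, equivalently, a sparse domination for $\mathbf T$ followed by the known weighted bounds for the multi(sub)linear maximal operator $\mathcal M$ — applies and gives, for $\vec\omega\in A_{\vec Q}$, that $S:L^{q_1}(\omega_1)\times\cdots\times L^{q_m}(\omega_m)\to L^q(\nu_{\vec\omega})$ when every $q_j>1$ and $S:L^{q_1}(\omega_1)\times\cdots\times L^{q_m}(\omega_m)\to L^{q,\infty}(\nu_{\vec\omega})$ as soon as some $q_j=1$. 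The one genuinely technical step is the $\mathcal H$-valued smoothness estimate for $\mathbf K$: it amounts to bounding the difference of two cone integrals, which requires splitting the $t$-integration at the scale $\max_j|x-y_j|$ and treating separately the part of $\Gamma$ on which the argument of $\psi$ has small modulus; once that is in place, the rest is a routine transcription of the scalar multilinear Calder\'on--Zygmund theory to the Hilbert-space–valued setting.
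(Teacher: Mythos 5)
This lemma is not proved in the paper at all: it is imported verbatim from \cite{CXY2015}, so there is no in-paper argument to compare against. Your proposal is essentially the route taken in that reference and in the related works \cite{XPY2015,SXY2014,XY2015}: one treats $S$ as (the pointwise $\mathcal H$-norm of) an $m$-linear operator with an $L^2\big(\Gamma,\frac{du\,dt}{t^{n+1}}\big)$-valued kernel, verifies size and H\"older-regularity estimates for that kernel in the integrated (square-function) sense, and then runs the weighted multilinear Calder\'on--Zygmund machinery of \cite{LOPTT2009} — in practice via the pointwise sharp-maximal control $M^{\#}_{\delta}(S(\vec f))\lesssim \mathcal M(\vec f)$ together with the weighted bounds for the multi(sub)linear maximal function, which is exactly the Banach-valued transcription you describe — to get the strong type for $q_1,\dots,q_m>1$ and the weak type when some $q_j=1$, starting from the assumed unweighted bound at $(s_1,\dots,s_m)$. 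Your sketch is sound, and you correctly isolate the only genuinely delicate point, namely the $\mathcal H$-valued regularity estimate for $\mathbf K$: note that since the smoothness hypothesis on $\psi$ is stated one slot at a time under the constraint $2|z|\le\max_j|y_j|$, the $x$-regularity requires telescoping through the $m$ slots and treating separately (by the size condition and the smallness of the exceptional region in the $(u,t)$-measure) the portion of $\Gamma$ where that constraint fails; this is routine but must be written out, as you indicate, and with it the proof is complete.
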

By the result above, we have following results for the multilinear area integral of Lusin $S$. Since the proof is very similar with it of Theorem \ref{Littlewood-PaleygAmalgam}, we omit it.
\begin{theorem}\label{areaintegralAmalgam}
Let $1\leq s\leq q_j \leq\alpha_j<p_j\leq\infty,\ j = 1,\ldots,m$ with
$1/q=\sum_{j=1}^m 1/{q_j}, 1/p=\sum_{j=1}^m 1/{p_j}, 1/{\alpha}=\sum_{j=1}^m 1/{\alpha_m}$
and  $p/{p_j}=q/{q_j}=\alpha/{\alpha_j}$, $\vec{\omega}\in A_{\vec{Q}\cap (A_{\infty})^m}$.
Then $S$ is a bounded operator from $(L^{q_1}(\omega_1),L^{p_1})^{\alpha_1}\times\cdots\times (L^{q_m}(\omega_m),L^{p_m})^{\alpha_m}$ to $(L^q(\nu_{\vec{\omega}}),L^p)^{\alpha}$ or $(L^{q,\infty}(\nu_{\vec{\omega}}),L^p)^{\alpha}$.
\end{theorem}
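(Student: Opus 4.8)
The plan is to derive the theorem from Theorem~\ref{OperatoronAmalgam}. Indeed, Lemma~\ref{areaintegralLebesgue} already supplies the weighted Lebesgue--space boundedness of $S$ (strong type when all $q_j>1$, weak type when some $q_j=1$), which is exactly the Lebesgue hypothesis in parts (i) and (ii) of Theorem~\ref{OperatoronAmalgam}; so the only thing left to check is that $S$ obeys the pointwise local estimate \eqref{AmalgamCondition}. This verification runs along the same lines as the one carried out for the Littlewood--Paley $g$-function in the proof of Theorem~\ref{Littlewood-PaleygAmalgam}, and as there it is enough to treat the exponent $s=1$ in \eqref{AmalgamCondition}, since raising $s$ only enlarges the right-hand side by H\"older's inequality. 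Note that, just as for $S$ itself, only the size condition on $\psi$ is used; its smoothness condition plays no role here.

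First I would fix a ball $B=B(y,r)$, decompose $f_j=f_j^0+f_j^\infty=f_j\chi_{2B}+f_j\chi_{(2B)^c}$ for $j=1,\dots,m$, and fix $\vec D=(d_1,\dots,d_m)\in\mathcal I$ with associated nonempty index set $\sigma$. For $x\in B$, the size condition gives $|\psi_t(z-y_1,\dots,z-y_m)|\le C t^{\delta}\big(t+\sum_{j=1}^m|z-y_j|\big)^{-mn-\delta}$, and Minkowski's integral inequality yields
\[
|S(\vec f^{\vec D})(x)|\le C\int_{(\mathbb R^n)^m}\Big(\int\int_{\Gamma_\eta(x,t)}\frac{t^{2\delta}}{\big(t+\sum_{j=1}^m|z-y_j|\big)^{2mn+2\delta}}\,\frac{dz\,dt}{t^{n+1}}\Big)^{1/2}\prod_{j=1}^m|f_j^{d_j}(y_j)|\,d\vec y .
\]
The only genuinely new feature compared with the $g$-function is the inner integration over the cone $\Gamma_\eta(x,t)=\{|z-x|<\eta t\}$: once the denominator has been replaced by one centred at $x$ (see below), the integrand is essentially independent of $z$ on a ball of radius $\eta t$, so the $z$-integral contributes a factor $\sim(\eta t)^n$, which cancels $t^{-n}$ and restores the measure $t^{-1}\,dt$. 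Hence the kernel above is dominated by $\big(\int_0^\infty t^{2\delta}\big(t+\sum_j|x-y_j|\big)^{-2mn-2\delta}\,t^{-1}\,dt\big)^{1/2}\le C\big(\sum_j|x-y_j|\big)^{-mn}$ (by the scaling $t=s\sum_j|x-y_j|$), which is precisely the bound reached in the proof of Theorem~\ref{Littlewood-PaleygAmalgam}. From this point on I would argue verbatim as there: for $j\in\sigma^c$ the variable $y_j$ lies in $2B$; for $j\in\sigma$ I split $(2B)^c=\bigcup_{k\ge1}\big(2^{k+1}B\setminus2^kB\big)$, use $|x-y_j|\gtrsim 2^k r$ on the $k$-th annulus, and sum the resulting geometric series in $k$, which produces exactly the right-hand side of \eqref{AmalgamCondition}.

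The step I expect to be the main obstacle is the comparison $t+\sum_{j=1}^m|z-y_j|\gtrsim t+\sum_{j=1}^m|x-y_j|$ valid for $|z-x|<\eta t$, needed both to pull the $(\eta t)^n$ factor out of the cone and to recentre the denominator at $x$. Since $\eta$ is an arbitrary fixed constant, the trivial bound $|z-y_j|\ge|x-y_j|-\eta t$ does not suffice by itself, and one splits into cases according to the size of $A:=\sum_j|x-y_j|$ relative to $t$: if $A\le 2m\eta t$ the term $t$ already dominates both sides; if $A>2m\eta t$ then $\sum_j|z-y_j|\ge A-m\eta t\ge\tfrac12 A$. This elementary case analysis is the one slightly delicate point; granting it, the remainder is routine and identical to the $g$-function case, and invoking Theorem~\ref{OperatoronAmalgam}(i) and (ii) finishes the proof.
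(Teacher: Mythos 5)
Your proposal is correct and follows essentially the route the paper intends: the paper omits the proof, noting it is "very similar" to that of Theorem \ref{Littlewood-PaleygAmalgam}, i.e.\ one verifies the pointwise condition \eqref{AmalgamCondition} for $S$ using only the size condition of $\psi$ and then invokes Lemma \ref{areaintegralLebesgue} together with Theorem \ref{OperatoronAmalgam}(i)--(ii). Your extra step --- the case analysis giving $t+\sum_j|z-y_j|\gtrsim t+\sum_j|x-y_j|$ on the cone $|z-x|<\eta t$, so that the $z$-integration contributes $(\eta t)^n$ and the kernel is dominated by $\big(\sum_j|x-y_j|\big)^{-mn}$ --- is precisely the detail the paper leaves implicit, and it is handled correctly.
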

The corresponding multilinear Littlewood-paley $g_{\lambda}^*$ function introduced by Shi, Xue and Yabuta in \cite{SXY2014} is defined by following equation
\begin{align}\label{Littlewood-Paleyglambda}
g_{\lambda}^*(\vec f)(x)=\bigg(\int\int_{\mathbb{R}^{n+1}}\bigg(\frac{t}{t+|x-z|}\bigg)^{\lambda n}|\psi_t*\vec f(z)|^2\frac{dzdt}{t^{n+1}}\bigg)^{\frac{1}{2}},
\end{align}
where $\lambda>1.$
\begin{lemma}\label{Littlewood-PaleyglambdaLebesgue}\cite{SXY2014}
Suppose that for some $1\leq s_1,\ldots,s_m<\infty$ with $1/s=\sum_{j=1}^m1/{s}$, $g_\lambda^*$ is bounded from $L^{s_1}(\mathbb{R}^n)\times\cdots\times L^{s_m}(\mathbb{R}^n)$. Let $\lambda>2m$, $0<\gamma<\min\{n(\lambda-2m)/2,\delta\}$, $1\leq q_j <\infty,\ j = 1,\ldots,m$ with
$1/q=\sum_{j=1}^m 1/{q_j}$ and $\vec\omega\in A_{\vec{Q}}$,
then $g_{\lambda}^*$ is a bounded operators from $L^{q_1}(\omega_1)\times\cdots\times L^{q_m}(\omega_m)$ to $L^q(\nu_{\vec{\omega}})$ or $L^{q,\infty}(\nu_{\vec{\omega}})$.
\end{lemma}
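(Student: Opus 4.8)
The plan is to derive the weighted bounds for $g_\lambda^*$ from the ones already recorded for the area integral $S$ in Lemma~\ref{areaintegralLebesgue}, exploiting the slack built into the hypothesis $\lambda>2m$. Note first that on the central cone $\{(z,t):|x-z|<t\}$ one has $\big(t/(t+|x-z|)\big)^{\lambda n}\ge 2^{-\lambda n}$, so the assumed $L^{s_1}\times\cdots\times L^{s_m}$ boundedness of $g_\lambda^*$ already gives the a priori boundedness required to apply Lemma~\ref{areaintegralLebesgue}. The first step is then a pointwise comparison obtained by slicing $\mathbb{R}^{n+1}_+$ according to the size of $|x-z|/t$: decompose the defining integral of $g_\lambda^*(\vec f)(x)^2$ into the contribution of the central cone and the contributions of the dyadic shells $\{(z,t):2^{j-1}t\le|x-z|<2^jt\}$, $j\ge1$; on the $j$-th shell $\big(t/(t+|x-z|)\big)^{\lambda n}\le C2^{-j\lambda n}$ and the shell lies inside the cone $\Gamma_{2^j}(x)$ of aperture $2^j$, whence
\begin{align*}
g_\lambda^*(\vec f)(x)\le C\sum_{j=0}^\infty 2^{-j\lambda n/2}\,S_{2^j}(\vec f)(x),
\end{align*}
where $S_a$ denotes the area integral whose cone has aperture $a$ (so $S$ is $S_\eta$ for the fixed $\eta$).

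The core of the argument is to quantify the aperture dependence of $S_a$ in weighted norm, i.e. to establish
\begin{align*}
\|S_a(\vec f)\|_{L^q(\nu_{\vec\omega})}\le Ca^{\theta}\prod_{i=1}^m\|f_i\|_{L^{q_i}(\omega_i)},\qquad a\ge1,
\end{align*}
for some exponent $\theta<\lambda n/2$. This step is standard (it is essentially in \cite{SXY2014}): one covers the cone $\Gamma_a(x)$ by $O(a^n)$ unit-aperture cones and applies Lemma~\ref{areaintegralLebesgue} on each piece, tracking how the multiple-weight constant (via $\nu_{\vec\omega}\in A_{mq}$ and $\omega_j^{1-q_j'}\in A_{mq_j'}$, Lemma~\ref{Multipleweighteqvanlence}) and the kernel smoothness exponent $\gamma$ enter; this is precisely where the constraint $0<\gamma<\min\{n(\lambda-2m)/2,\delta\}$ is consumed, and the resulting $\theta$ is of order $mn$. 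Because $\lambda>2m$, the $j$-th term in the pointwise bound is then dominated by $C2^{-j\lambda n/2}2^{j\theta}=C2^{j(\theta-\lambda n/2)}$, which is summable in $j$; summing the series gives $\|g_\lambda^*(\vec f)\|_{L^q(\nu_{\vec\omega})}\le C\prod_{i=1}^m\|f_i\|_{L^{q_i}(\omega_i)}$. When some $q_i=1$, the same scheme with the weak-type half of Lemma~\ref{areaintegralLebesgue} yields the $L^{q,\infty}(\nu_{\vec\omega})$ bound, the only additional observation being the quasi-subadditivity of the weak-type quasi-norm over the geometrically convergent sum, which costs at most an absolute constant thanks to the factor $2^{-j\lambda n/2}$.

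The step I expect to be the real obstacle is this aperture estimate for $S_a$: in the unweighted $L^2$ theory the aperture costs only $a^{n/2}$, but for $\nu_{\vec\omega}\in A_{\vec Q}$ the dependence is genuinely larger, and one must verify that the exponent $\theta$ nevertheless stays strictly below the threshold $\lambda n/2$ made available by $\lambda>2m$, and that the $\gamma$-smoothness of the kernel is enough to push the covering/splitting argument through on the weighted spaces. Once that quantitative bound is in hand, the remainder of the proof is the routine geometric summation sketched above.
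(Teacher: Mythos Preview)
The paper does not prove this lemma at all: it is quoted verbatim from \cite{SXY2014} and used as a black box, so there is no ``paper's own proof'' to compare your attempt against.

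That said, your outline is the standard route and in fact matches the pointwise decomposition the paper itself invokes one result later, in the proof of Theorem~\ref{Littlewood-PaleyglambdaAmalgam}, namely
\[
g_\lambda^*(\vec f)(x)\le C\sum_{j=0}^\infty 2^{-j\lambda n/2}\,S_{2^j}(\vec f)(x).
\]
You also correctly isolate the only genuinely nontrivial ingredient: a weighted aperture estimate of the form $\|S_a(\vec f)\|_{L^q(\nu_{\vec\omega})}\le Ca^{\theta}\prod_i\|f_i\|_{L^{q_i}(\omega_i)}$ with $\theta<\lambda n/2$. This is exactly the substance of \cite{SXY2014}, and it is where the hypotheses $\lambda>2m$ and $0<\gamma<\min\{n(\lambda-2m)/2,\delta\}$ are actually consumed. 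Your sketch does not supply this estimate (the ``covering by $O(a^n)$ unit cones'' heuristic is suggestive but not a proof in the multilinear multiple-weight setting), so as a self-contained argument the proposal is incomplete; as a reduction to the cited result it is accurate and is precisely how the paper treats the matter.
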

We have the corresponding result for $g_{\lambda}^*$ on weighted Amalgam space.
\begin{theorem}\label{Littlewood-PaleyglambdaAmalgam}
Let $\lambda>2m+1$, $0<\gamma<\min\{n(\lambda-2m)/2,\delta\}$, $1\leq s\leq q_j \leq\alpha_j<p_j\leq\infty,\ j = 1,\ldots,m$ with
$1/q=\sum_{j=1}^m 1/{q_j}, 1/p=\sum_{j=1}^m 1/{p_j}, 1/{\alpha}=\sum_{j=1}^m 1/{\alpha_m}$
and  $p/{p_j}=q/{q_j}=\alpha/{\alpha_j}$. If $\vec{\omega}\in A_{\vec{Q}\cap (A_{\infty})^m}$,
then $g_{\lambda}^*$ is a bounded operator from $(L^{q_1}(\omega_1),L^{p_1})^{\alpha_1}\times\cdots\times (L^{q_m}(\omega_m),L^{p_m})^{\alpha_m}$ to $(L^q(\nu_{\vec{\omega}}),L^p)^{\alpha}$ or $(L^{q,\infty}(\nu_{\vec{\omega}}),L^p)^{\alpha}$.
\end{theorem}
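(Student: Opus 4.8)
The plan is to imitate, step for step, the argument used for the $g$-function in Theorem~\ref{Littlewood-PaleygAmalgam}. By Theorem~\ref{OperatoronAmalgam} it is enough to check that $g_\lambda^*$ satisfies the pointwise bound~\eqref{AmalgamCondition}; the weighted Lebesgue boundedness needed to run parts~(i) and~(ii) of that theorem is supplied by Lemma~\ref{Littlewood-PaleyglambdaLebesgue}. Since the multilinear standard kernel is integrable, the estimate it will produce involves only the $L^1$-averages $\frac1{|B|}\int_B|f_j|$, and these are dominated by the $L^s$-averages for every $s\ge1$; hence it suffices to verify~\eqref{AmalgamCondition} with $s=1$ and then apply Theorem~\ref{OperatoronAmalgam} with $s=1$, for which the weight hypothesis reads $\vec\omega\in A_{\vec Q}\cap(A_\infty)^m$ — exactly what is assumed. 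So I would fix a ball $B=B(y,r)$, decompose each $f_j=f_j^0+f_j^\infty$, fix $\vec D\in\mathcal I$ with index set $\sigma$, and estimate $g_\lambda^*(\vec f^{\vec D})(x)$ for $x\in B$.

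First I would insert the size condition of the multilinear standard kernel into the defining formula~\eqref{Littlewood-Paleyglambda} and pull $\prod_j|f_j^{d_j}|$ out of the $L^2$-norm in $(z,t)$ by Minkowski's inequality, exactly as for the $g$-function. This leaves the scalar kernel
\begin{align*}
K_x(\vec y)=\left(\int_0^\infty\int_{\mathbb R^n}\left(\frac{t}{t+|x-z|}\right)^{\lambda n}\frac{t^{2\delta}}{\bigl(t+\sum_{j=1}^m|z-y_j|\bigr)^{2(mn+\delta)}}\,\frac{dz\,dt}{t^{n+1}}\right)^{1/2}
\end{align*}
to be controlled. The one genuinely new ingredient, relative to Theorem~\ref{Littlewood-PaleygAmalgam}, is the proof that
\begin{align*}
K_x(\vec y)\le\frac{C}{\bigl(\sum_{j=1}^m|x-y_j|\bigr)^{mn}},
\end{align*}
that is, that the Littlewood--Paley weight and the extra $z$-integration cost nothing; once this is in hand, $K_x$ is bounded by precisely the kernel that appeared in the $g$-function computation.

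To prove this bound on $K_x$ I would split the inner $z$-integral according to the size of $|x-z|$: the region $\{|x-z|\le t\}$, where the weight is $\approx1$ and the $z$-measure is $\approx t^n$; the region $\{t<|x-z|\le\frac1{2m}\sum_j|x-y_j|\}$, where the weight contributes the gain $(t/|x-z|)^{\lambda n}$; and the dyadic ``far'' annuli $\{2^{l-1}\rho_0<|x-z|\le 2^l\rho_0\}$, $l\ge1$, with $\rho_0=\max\{t,\tfrac1{2m}\sum_j|x-y_j|\}$, where the weight is $\lesssim 2^{-l\lambda n}$ against a $z$-measure $\approx(2^l\rho_0)^n$. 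On the first two regions $\sum_j|z-y_j|\gtrsim\sum_j|x-y_j|$, while on the $l$-th far annulus $t+\sum_j|z-y_j|\gtrsim\max\{t,2^l\rho_0\}$; performing the elementary $t$-integral (which converges because $\delta>0$) and then summing the geometric series in $l$ — which converges precisely when $\lambda n-n>2mn$, i.e.\ $\lambda>2m+1$ — yields $K_x(\vec y)\lesssim\bigl(\sum_j|x-y_j|\bigr)^{-mn}$. This is the multilinear form of the classical $g_\lambda^*$ estimate; cf.\ \cite{SXY2014,CXY2015}.

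From that point the proof is a repetition of the end of the argument for Theorem~\ref{Littlewood-PaleygAmalgam}: for $j\in\sigma^c$ one has $y_j\in2B$, while for $j\in\sigma$ the membership $y_j\in 2^{k+1}B\setminus2^kB$ forces $|x-y_j|\ge2^{k-1}r$ and hence $K_x(\vec y)\lesssim(2^kr)^{-mn}$ on the relevant piece; substituting, regrouping the factors $|2B|^{-1}$ and $|2^{k+1}B|^{-1}$, and summing the dyadic tails would produce~\eqref{AmalgamCondition}. Invoking Theorem~\ref{OperatoronAmalgam}(i) when all $q_j>1$, and Theorem~\ref{OperatoronAmalgam}(ii) when some $q_j=1$, together with Lemma~\ref{Littlewood-PaleyglambdaLebesgue}, would then give the strong and weak amalgam bounds respectively. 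The step that demands real care is the control of $K_x(\vec y)$: balancing the three ranges of $|x-z|$ so that the Littlewood--Paley weight exactly absorbs the volume growth of the annuli, and thereby pinning down $\lambda>2m+1$ as the sharp threshold in this scheme, is the heart of the matter; everything else is bookkeeping already carried out in Theorem~\ref{Littlewood-PaleygAmalgam}.
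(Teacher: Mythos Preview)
Your approach is correct but differs from the paper's. The paper does not verify \eqref{AmalgamCondition} for $g_\lambda^*$ directly; instead it uses the classical pointwise decomposition
\[
g_\lambda^*(\vec f)(x)\le C\sum_{j=0}^\infty 2^{-j\lambda n/2}\,S_{2^j}(\vec f)(x),
\]
obtained by slicing the cone variable into the regions $|x-z|<t$ and $2^{j-1}t\le|x-z|<2^jt$, and then appeals to the amalgam bound for the area integrals $S_{2^j}$ (Theorem~\ref{areaintegralAmalgam}), summing the resulting estimates. Your route---bounding the scalar kernel $K_x(\vec y)$ outright by a three-region split in $|x-z|$ relative to both $t$ and $\sum_j|x-y_j|$---is essentially the kernel computation that underlies the integral C--Z type~II condition (cf.\ Definition~\ref{C-ZtypeII} and \cite{SXY2014,XY2015}), and it plugs straight into Theorem~\ref{OperatoronAmalgam} without passing through $S_\eta$. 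The paper's argument is shorter because it recycles Theorem~\ref{areaintegralAmalgam}, but it hides the dependence of the $S_{2^j}$ bound on the aperture $2^j$, which is what must be balanced against $2^{-j\lambda n/2}$; your direct estimate makes that balance explicit and shows transparently where the threshold on $\lambda$ enters. Both arguments ultimately rely on the same ingredients (Minkowski, the size condition, and a dyadic decomposition of the $z$-variable), just packaged differently.
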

\begin{proof}
In fact, we can decompose $g_{\lambda}^*(\vec{f})(x)$ into the sum of multilinear area integral of Lusin $S_{2^j}(\vec{f})(x)$ as follows
\begin{align*}
g_{\lambda}^*(\vec f)(x)&\leq\bigg(\int_0^\infty\int_{\Gamma(x)}\bigg(\frac{t}{t+|x-z|}\bigg)^{\lambda n}|\psi_t*\vec f(z)|^2\frac{dzdt}{t^{n+1}}\bigg)^{\frac{1}{2}}\\
&\hspace*{12pt}+\sum_{j=1}^\infty\bigg(\int_0^\infty\int_{2^{j-1}t\leq|z-x|<2^jt}\bigg(\frac{t}{t+|x-z|}\bigg)^{\lambda n}|\psi_t*\vec f(z)|^2\frac{dzdt}{t^{n+1}}\bigg)^{\frac{1}{2}}\\
&\leq C\sum_{j=0}^\infty\frac{1}{2^{j\lambda n/2}}S_{2^j}(\vec{f})(x).
\end{align*}
The remainder statement is the same and we omit it.
\end{proof}

\textbf{II: Nonconvolution Type}
\hskip\parindent

Next, we consider the multilinear Littlewood-Paley functions with the kernel of nonconvolution type introduced by Xue and Yan \cite{XY2015}.
\begin{definition}(\textbf{Integral smooth condition of C-Z type I})\label{C-ZtypeI}
For any $t\in(0,\infty)$,
let $K_t(x,y_1,\ldots,y_m)$ be a locally integrable function defined away from the diagonal
$x=y_1=\cdots=y_m$ in $(\mathbb{R}^n)^{m+1}$ and denote $(x, \vec{y})=(x,y_1,\ldots,y_m)$. We say $K_t$ satisfies
the integral condition of C-Z type I, if for some positive constants $\gamma, A$, and $B > 1$,
the following inequalities hold:
\begin{align*}
\bigg(\int_0^\infty|K_t(x,\vec{y})|^2\frac{dt}{t}\bigg)^{\frac{1}{2}}\leq\frac{A}{\bigg(\sum_{j=1}^m|x-y_j|\bigg)^{mn}},
\end{align*}
\begin{align*}
\bigg(\int_0^\infty|K_t(z,\vec{y})-K_t(x,\vec{y})|^2\frac{dt}{t}\bigg)^{\frac{1}{2}}\leq\frac{A|z-x|^{\gamma}}{\bigg(\sum_{j=1}^m|x-y_j|\bigg)^{mn+\gamma}},
\end{align*}
whenever $|x-z|\leq\max_{j=1}^m|x-y_j|/B$;
and
\begin{align*}
\bigg(\int_0^\infty|K_t(z,y_1,\ldots,y_j,\ldots,y_m)-K_t(x,y_1,\ldots,y_j',\ldots,y_m)|^2\frac{dt}{t}\bigg)^{\frac{1}{2}}
\leq\frac{A|y_j-y_j'|^{\gamma}}{\bigg(\sum_{j=1}^m|x-y_j|\bigg)^{mn+\gamma}},
\end{align*}
whenever $|y_j-y_j'|\leq |x-y_j|/B$, for $j=1,\ldots,m$. The multilinear square function $T$ is defined by
\begin{align}\label{squarefunctionI}
T(\vec{f})(x):=\Bigg(\int_0^\infty\bigg|\int_{(\mathbb{R}^n)^m}K_t(x,y_1,\ldots,y_m)\prod_{j=1}^mf_j(y_j)dy_1\cdots dy_m\bigg|^2\frac{dt}{t}\Bigg)^{1/2}
\end{align}
for any $\vec{f}=(f_1,\ldots,f_m)\in \mathcal{S}(\mathbb{R}^n)\times\cdots\times\mathcal{S}(\mathbb{R}^n)$ and all $x\notin \bigcap_{j=1}^m\text{supp} f_j$.
\end{definition}

\begin{definition}(\textbf{Integral smooth condition of C-Z type II})\label{C-ZtypeII}
 For any $t\in(0,\infty)$,
let $K_t(x,y_1,\ldots,y_m)$ be a locally integrable function defined away from the diagonal
$x=y_1=\cdots=y_m$ in $(\mathbb{R}^n)^{m+1}$ and denote $(x, \vec{y})=(x,y_1,\ldots,y_m)$. We say $K_t$ satisfies
the integral condition of C-Z type II, if for some positive constants $\gamma, A$, and $B > 1$,
the following inequalities hold:
\begin{align*}
\bigg(\int\int_{\mathbb{R}_+^{n+1}}\bigg(\frac{t}{t+|z|}\bigg)^{\lambda n}
|K_t(x-z,\vec{y})|^2\frac{dzdt}{t^{n+1}}\bigg)^{\frac{1}{2}}\leq\frac{A}{\bigg(\sum_{j=1}^m|x-y_j|\bigg)^{mn}},
\end{align*}
\begin{align*}
\bigg(\int\int_{\mathbb{R}_+^{n+1}}\bigg(\frac{t}{t+|z|}\bigg)^{\lambda n}
|K_t(x-z,\vec{y})-K_t(x'-z,\vec{y})|^2\frac{dzdt}{t^{n+1}}\bigg)^{\frac{1}{2}}\leq\frac{A|x-x'|^\gamma}{\bigg(\sum_{j=1}^m|x-y_j|\bigg)^{mn+\gamma}},
\end{align*}
whenever $|x-x'|\leq\max_{j=1}^m|x-y_j|/B$;
and
\begin{align*}
\bigg(\int\int_{\mathbb{R}_+^{n+1}}&\bigg(\frac{t}{t+|z|}\bigg)^{\lambda n}
|K_t(x-z,y_1,\ldots,y_j,\ldots,y_m)-K_t(x-z,y_1,\ldots,y_j',\ldots,y_m)|^2\frac{dzdt}{t^{n+1}}\bigg)^{\frac{1}{2}}\\
&\leq\frac{A|y_j-y_j'|^\gamma}{\bigg(\sum_{j=1}^m|x-y_j|\bigg)^{mn+\gamma}},
\end{align*}
whenever $|y_j-y_j'|\leq |x-y_j|/B$, for $j=1,\ldots,m$. The multilinear square function $T_\lambda$ is defined by
\begin{align}\label{squarefunctionII}
T_\lambda(\vec{f})(x):=\Bigg(\int\int_{\mathbb{R}_+^{n+1}}\bigg(\frac{t}{t+|x-z|}\bigg)^{\lambda n}
\bigg|\int_{(\mathbb{R}^n)^m}K_t(x,y_1,\ldots,y_m)\prod_{j=1}^mf_j(y_j)dy_1\cdots dy_m\bigg|^2\frac{dzdt}{t}\Bigg)^{1/2}
\end{align}
for any $\vec{f}=(f_1,\ldots,f_m)\in \mathcal{S}(\mathbb{R}^n)\times\cdots\times\mathcal{S}(\mathbb{R}^n)$ and all $x\notin \bigcap_{j=1}^m\text{supp} f_j$.
\end{definition}

\begin{definition}(\textbf{Multilinear Littlewood-Paley kernel})
Let $K(x,\vec{y})$ be a local integral function defined away from the diagonal $x=y_1=\cdots y_m$ in $(\mathbb{R}^n)^{m+1}$. $K$ is called a a multilinear Littlewood-Paley kernel if for some positive constants $A$, $\gamma_0$, $\delta$ and $B_1>1$,
\begin{align*}
|K(x,\vec{y})|\leq\frac{A}{\bigg(1+\sum_{j=1}^m|x-y_j|\bigg)^{mn+\delta}},
\end{align*}
\begin{align*}
|K(z,\vec{y})-K(x,\vec{y})|\leq\frac{A|z-x|^{\gamma_0}}{\bigg(1+\sum_{j=1}^m|x-y_j|\bigg)^{mn+\delta+\gamma_0}},
\end{align*}
whenever $|x-z|\leq\max_{j=1}^m|x-y_j|/{B_1}$;
and
\begin{align*}
|K(z,y_1,\ldots,y_j,\ldots,y_m)-K(x,y_1,\ldots,y_j',\ldots,y_m)|
\leq\frac{A|y_j-y_j'|^{\gamma_0}}{\bigg(\sum_{j=1}^m|x-y_j|\bigg)^{mn+\delta+\gamma_0}},
\end{align*}
whenever $|y_j-y_j'|\leq |x-y_j|/{B_1}$ for $j=1,\ldots,m$.
\end{definition}

Given a kernel $K$, denote $K_t(x,y_1,\ldots,y_m)=\frac{1}{t^{mn}}K(\frac{x}{t},\frac{y_1}{t},\ldots,\frac{y_m}{t})$. In \cite{XY2015}, Xue and Yan obtained the weighted strong and weak type estimates for the three multilinear operators on weighted Lebesgue spaces with a thorough analysis. We points out here that $T$ and $T_\lambda$ associated with the multilinear Littlewood-Paley kernel are called multilinear Littlewood¨CPaley $g$ and $g_\lambda^*$ functions respectively. If $K$ is a multilinear Littlewood-Paley kernel, then $K_t$ satisfies the integral smooth condition of C-Z type I with $0 <\gamma\leq\min\{\delta, \gamma_0\}$, and if more $\lambda>2m$ it also satisfies the integral smooth condition of C-Z type II with $0 <\gamma\leq\min\{(\lambda_n-2mn)/2, \gamma_0, n/2\}$. We state their theorems as our lemmas.

\begin{lemma}\label{squarefunctionILebesgue}(\cite{XY2015})
Let $T$ be the operator defined in Definition \ref{C-ZtypeI} with the kernel satisfying the
integral condition of C-Z type I and assume that $T$ can be extended to a bounded operator for some $1\leq s_1,\ldots,s_m<\infty$ with $1/s=\sum_{j=1}^m1/{s}$. Let $1/q=\sum_{j=1}^m1/{q_j}$ with $1\leq q_1,\ldots,q_m<\infty$, and assume that $\vec{\omega}\in A_{\vec{Q}}$, then $T$ is a bounded operator from $L^{q_1}(\omega_1)\times\cdots\times L^{q_m}(\omega_m)$ to $L^q(\nu_{\vec{\omega}})$ or $L^{q,\infty}(\nu_{\vec{\omega}})$.
\end{lemma}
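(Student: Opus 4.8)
The plan is to prove this the now-standard way for multilinear Calder\'on--Zygmund operators: a pointwise sharp maximal function estimate, the Fefferman--Stein inequality, and the weighted bounds for the multilinear maximal operator, exactly along the lines of \cite{LOPTT2009} and \cite{XY2015}. Two preliminary reductions come first. Using the a priori boundedness of $T$ on $L^{s_1}\times\cdots\times L^{s_m}$ (with $1/s=\sum_j 1/s_j$) together with the size bound and the two regularity bounds of the C--Z type I condition, one runs the multilinear Calder\'on--Zygmund decomposition of \cite{LOPTT2009} to show that $T$ is of weak type $(1,\dots,1)$, i.e.\ bounded $L^1\times\cdots\times L^1\to L^{1/m,\infty}$; the only genuinely new point compared with the operator case is that the kernel bounds come packaged under the $L^2(dt/t)$ norm, so Minkowski's integral inequality must be used to bring that norm inside the spatial integrals before the annular pieces are summed. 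Second, since $\vec\omega\in A_{\vec Q}$ forces $\nu_{\vec\omega}\in A_{mq}\subset A_\infty$ by Lemma \ref{Multipleweighteqvanlence}, the Fefferman--Stein inequality will be available.

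The core step is the pointwise bound: for $0<\delta<1/m$ and every $x$,
\[
M^{\#}_{\delta}\big(T(\vec f)\big)(x)\le C\,\mathcal M(\vec f)(x),\qquad \mathcal M(\vec f)(x)=\sup_{B\ni x}\prod_{j=1}^m\frac1{|B|}\int_B|f_j|.
\]
To prove it, fix a ball $B=B(x_0,r)\ni x$, decompose each $f_j=f_j^0+f_j^\infty=f_j\chi_{2B}+f_j\chi_{(2B)^c}$, and expand $T(\vec f)=\sum_{\vec\beta\in\{0,\infty\}^m}T(f_1^{\beta_1},\dots,f_m^{\beta_m})$. For the purely local term one invokes Kolmogorov's inequality and the weak $(1,\dots,1)$ bound just obtained to get $\big(\frac1{|B|}\int_B|T(\vec f^0)|^\delta\big)^{1/\delta}\le C\prod_j\frac1{|2B|}\int_{2B}|f_j|\le C\,\mathcal M(\vec f)(x)$. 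For each of the remaining $2^m-1$ terms, which have at least one $\beta_j=\infty$, one subtracts the constant $c_{\vec\beta}=T(f_1^{\beta_1},\dots,f_m^{\beta_m})(x_0)$, writes the difference $T(\cdot)(z)-T(\cdot)(x_0)$ for $z\in B$, moves the $L^2(dt/t)$ norm inside by Minkowski's integral inequality, and estimates the kernel difference by telescoping in one far variable at a time via the regularity bounds; integrating $|f_j|$ over the dyadic annuli $2^{k+1}B\setminus 2^kB$, the decay factors $(2^kr)^{-\gamma}$ turn the $k$-series geometric and it sums to a constant multiple of $\mathcal M(\vec f)(x)$.

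Granting this, the strong-type conclusion follows: a truncation/density argument gives $M_\delta\big(T(\vec f)\big)\in L^{q_0}(\nu_{\vec\omega})$ for some $q_0<\infty$, so the Fefferman--Stein inequality yields $\|T(\vec f)\|_{L^q(\nu_{\vec\omega})}\le C\|M^{\#}_\delta(T(\vec f))\|_{L^q(\nu_{\vec\omega})}\le C\|\mathcal M(\vec f)\|_{L^q(\nu_{\vec\omega})}$, and the multilinear maximal theorem of \cite{LOPTT2009} controls the last term by $C\prod_j\|f_j\|_{L^{q_j}(\omega_j)}$ whenever $\vec\omega\in A_{\vec Q}$ and all $q_j>1$. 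When some $q_j=1$ the sharp function is unavailable, and I would instead carry out the weighted multilinear Calder\'on--Zygmund decomposition of \cite{LOPTT2009} at height $\lambda$ directly for $T$, bounding the good part by the a priori $L^{\vec s}$ estimate and the bad part by the regularity of $K_t$ under the $L^2(dt/t)$ norm, which produces the weak bound into $L^{q,\infty}(\nu_{\vec\omega})$. I expect the real obstacle to be the sharp-function estimate, and within it the terms with several indices $\beta_j=\infty$: the kernel must be telescoped one variable at a time while the other far pieces live at unrelated dyadic scales, and all of this has to survive the $L^2(dt/t)$ integration, so Minkowski's inequality and careful bookkeeping of the exponents $mn$ and $mn+\gamma$ are the crux. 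Once the lemma is in hand, the weighted-amalgam boundedness of $T$ is immediate from Theorem \ref{OperatoronAmalgam} by checking that $T$ satisfies \eqref{AmalgamCondition}, just as in the proof of Theorem \ref{Littlewood-PaleygAmalgam}.
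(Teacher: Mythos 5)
This lemma is not proved in the paper at all: it is imported verbatim from Xue and Yan \cite{XY2015} (cf.\ the sentence ``We state their theorems as our lemmas''), so there is no in-paper argument to compare against. Your outline reproduces essentially the proof given in \cite{XY2015}, which follows the template of \cite{LOPTT2009}: the unweighted weak $(1,\dots,1)$ endpoint via the multilinear Calder\'on--Zygmund decomposition (with Minkowski's inequality used to pass the $L^2(dt/t)$ norm inside the spatial integrals), the pointwise estimate $M^{\sharp}_{\delta}(T\vec f)\le C\,\mathcal M(\vec f)$ for $0<\delta<1/m$, and then Fefferman--Stein combined with the weighted bounds for the multilinear maximal function $\mathcal M$; I see no gap in that plan. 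The one remark worth making concerns the case where some $q_j=1$: instead of re-running a weighted Calder\'on--Zygmund decomposition for $T$, the standard and simpler route is the weak-type form of the Fefferman--Stein inequality, $\|T\vec f\|_{L^{q,\infty}(\nu_{\vec\omega})}\lesssim\|M^{\sharp}_{\delta}(T\vec f)\|_{L^{q,\infty}(\nu_{\vec\omega})}\lesssim\|\mathcal M(\vec f)\|_{L^{q,\infty}(\nu_{\vec\omega})}$, followed by the weak-type boundedness of $\mathcal M$ from $L^{q_1}(\omega_1)\times\cdots\times L^{q_m}(\omega_m)$ to $L^{q,\infty}(\nu_{\vec\omega})$ for $\vec\omega\in A_{\vec Q}$ established in \cite{LOPTT2009}, which is valid when some $q_j=1$.
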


\begin{lemma}\label{squarefunctionIILebesgue}(\cite{XY2015})
Let $T_\lambda$ be the operator defined in Definition \ref{C-ZtypeII} with the kernel satisfying the integral condition of C-Z type II with $\lambda>2m$ and assume that $T_\lambda$ can be extended to a bounded operator for some $1\leq s_1,\ldots,s_m<\infty$ with $1/s=\sum_{j=1}^m1/{s}$. Let $1/p=\sum_{j=1}^m1/{p_j}$
with $1\leq p_1,\ldots,p_m<\infty$ and $\lambda>2m$, and assume that $\vec{\omega}\in A_{\vec{p}}$, then $T_\lambda$ is a bounded operator from $L^{p_1}(\omega_1)\times\cdots\times L^{p_m}(\omega_m)$ to $L^p(\nu_{\vec{\omega}})$ or $L^{p,\infty}(\nu_{\vec{\omega}})$.
\end{lemma}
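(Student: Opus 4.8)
This is a weighted multilinear boundedness statement for a sub-$m$-linear square operator with a Calder\'on--Zygmund-type kernel, and the plan is the standard three-step scheme: (i) a Fefferman--Stein sharp maximal estimate comparing $T_\lambda$ with the multilinear maximal operator $\mathcal M(\vec f)(x):=\sup_{Q\ni x}\prod_{j=1}^m\frac1{|Q|}\int_Q|f_j|$ of \cite{LOPTT2009}; (ii) the Fefferman--Stein inequality $\|h\|_{L^p(w)}\le C\|M^{\#}_{\delta}h\|_{L^p(w)}$ for $w\in A_\infty$, where $M^{\#}_{\delta}h=\big(M^{\#}(|h|^{\delta})\big)^{1/\delta}$; and (iii) the weighted bounds for $\mathcal M$ and its powers from \cite{LOPTT2009}. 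As a preliminary reduction, multilinear Marcinkiewicz interpolation between the hypothesised bound $L^{s_1}\times\cdots\times L^{s_m}\to L^s$ and the weak endpoint bound furnished by the kernel conditions (the latter proved by the Calder\'on--Zygmund decomposition described below) shows that $T_\lambda$ is bounded on an open range of exponent tuples; in particular one fixes $r>1$ as close to $1$ as needed so that $T_\lambda:L^r\times\cdots\times L^r\to L^{r/m}$ and, by the self-improvement of the $A_{\vec p}$ condition encoded in Lemma \ref{Multipleweighteqvanlence}, so that $\vec\omega\in A_{\vec p/r}$; consequently $\mathcal M_r(\vec f):=\big(\mathcal M(|f_1|^r,\dots,|f_m|^r)\big)^{1/r}$ maps $L^{p_1}(\omega_1)\times\cdots\times L^{p_m}(\omega_m)$ into $L^p(\nu_{\vec\omega})$.

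For the strong-type conclusion ($1<p_1,\dots,p_m<\infty$) the main estimate is
\begin{align*}
M^{\#}_{\delta}\big(T_\lambda(\vec f)\big)(x)\le C\,\mathcal M_{r}(\vec f)(x),\qquad 0<\delta<1/m.
\end{align*}
To prove it, fix a cube $Q$, write $f_j=f_j\chi_{3Q}+f_j\chi_{(3Q)^c}$ and expand the $m$-fold product. The fully local term $T_\lambda(f_1\chi_{3Q},\dots,f_m\chi_{3Q})$ is controlled on $Q$, by boundedness on $L^r\times\cdots\times L^r$ and H\"older's inequality, by $\prod_j\big(\frac1{|3Q|}\int_{3Q}|f_j|^{r}\big)^{1/r}\le\mathcal M_r(\vec f)(x)$. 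For every term carrying at least one global factor, view $T_\lambda$ as an operator with values in the Hilbert space $\mathcal H=L^2\big((\tfrac{t}{t+|x-z|})^{\lambda n}\tfrac{dz\,dt}{t^{n+1}}\big)$: Minkowski's inequality moves the $\mathcal H$-norm inside the $\vec y$-integration, the size and smoothness inequalities defining the integral condition of C-Z type II bound the resulting $\mathcal H$-valued kernel and legitimise subtracting a constant $c$ (chosen as the corresponding average over $Q$), and summing the resulting geometric series over the dyadic dilates $2^kQ$ gives a bound $\lesssim\mathcal M_r(\vec f)(x)$. This last summation, and the finiteness of the defining $\mathcal H$-norms, is exactly where the hypothesis $\lambda>2m$ is used. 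Inserting the main estimate into the Fefferman--Stein inequality (valid since $\nu_{\vec\omega}\in A_\infty$, with the usual density/truncation precaution) and invoking the weighted bound for $\mathcal M_r$ from the preliminary reduction yields $\|T_\lambda(\vec f)\|_{L^p(\nu_{\vec\omega})}\le C\prod_{j=1}^m\|f_j\|_{L^{p_j}(\omega_j)}$.

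For the weak-type endpoint, when some $p_j=1$, the sharp maximal route is not available and one argues directly via the multilinear Calder\'on--Zygmund decomposition of \cite{LOPTT2009}: split each $f_j=g_j+b_j$ at the natural height, with $b_j=\sum_i b_{i,j}$ supported on a family of cubes whose suitable dilates form the exceptional set (whose $\nu_{\vec\omega}$-measure is controlled through the weak $A_{\vec p}$ bound for $\mathcal M$); expand the product into $2^m$ pieces, treat the all-good piece by the $L^r\times\cdots\times L^r$ bound, and treat each piece containing a bad factor away from the enlarged exceptional cubes using the C-Z type II smoothness conditions, again expressed through the $\mathcal H$-valued kernel. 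I expect the principal obstacle to be precisely this $\mathcal H$-valued bookkeeping: one must verify that the three integral inequalities of C-Z type II translate into genuine size and Lipschitz-type estimates for the $\mathcal H$-valued kernel of $T_\lambda$, with enough decay in the dyadic annuli to run the summations above. Everything downstream is then the standard $A_{\vec p}$ machinery recorded in Lemmas \ref{ReHolder}--\ref{Multipleweighteqvanlence}. A fully detailed proof is given in \cite{XY2015}.
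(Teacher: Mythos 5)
This lemma is not proved in the paper at all---it is quoted verbatim as a known result of Xue and Yan \cite{XY2015} (``We state their theorems as our lemmas'')---and your sketch correctly outlines the argument of that reference: the endpoint/open-range reduction via the multilinear Calder\'on--Zygmund decomposition and interpolation, the sharp maximal estimate $M^{\#}_{\delta}(T_\lambda(\vec f))\lesssim \mathcal M_r(\vec f)$ for $0<\delta<1/m$ obtained by treating $T_\lambda$ as an $\mathcal H$-valued operator (which is exactly where $\lambda>2m$ is needed), and the Fefferman--Stein plus $A_{\vec p}$ machinery, with a direct Calder\'on--Zygmund argument for the weak case when some $p_j=1$. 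So your proposal is essentially the same proof the paper implicitly relies on, and there is no gap to flag.
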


The multilinear square function $T$ and $T_\lambda$ have the same properties on weighted Amalgam space too.

\begin{theorem}\label{squarefunctionIAmalgam}
Let $T$ be the operator defined in Definition \ref{C-ZtypeI} with the kernel satisfying the
integral condition of C-Z type I and assume that $T$ can be extended to a bounded operator for some $1\leq r_1,\ldots,r_m<\infty$ with $1/r=\sum_{j=1}^m1/{r}$.
Let $1\leq s\leq q_j \leq\alpha_j<p_j\leq\infty,\ j = 1,\ldots,m$ with
$1/q=\sum_{j=1}^m 1/{q_j}, 1/p=\sum_{j=1}^m 1/{p_j}, 1/{\alpha}=\sum_{j=1}^m 1/{\alpha_m}$
and  $p/{p_j}=q/{q_j}=\alpha/{\alpha_j}$, $\vec{\omega}\in A_{\vec{Q}\cap (A_{\infty})^m}$, then
$T$ is bounded from $(L^{q_1}(\omega_1),L^{p_1})^{\alpha_1}\times\cdots\times (L^{q_m}(\omega_m),L^{p_m})^{\alpha_m}$ to $(L^q(\nu_{\vec{\omega}}),L^p)^{\alpha}$ or $(L^{q,\infty}(\nu_{\vec{\omega}}),L^p)^{\alpha}$.
\end{theorem}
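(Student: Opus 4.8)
The plan is to invoke Theorem~\ref{OperatoronAmalgam} directly, so the entire task reduces to verifying that the multilinear square function $T$ (built from a kernel satisfying the integral smooth condition of C--Z type~I) obeys the pointwise estimate \eqref{AmalgamCondition}. Since Lemma~\ref{squarefunctionILebesgue} already supplies the required weighted Lebesgue boundedness ($L^{q_1}(\omega_1)\times\cdots\times L^{q_m}(\omega_m)\to L^q(\nu_{\vec\omega})$ when all $q_j>1$, and into $L^{q,\infty}(\nu_{\vec\omega})$ when some $q_j=1$), checking \eqref{AmalgamCondition} is the only real content, exactly as in the proof of Theorem~\ref{Littlewood-PaleygAmalgam}.

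First I would fix a ball $B=B(y,r)$, decompose each $f_j=f_j^0+f_j^\infty$ with $f_j^0=f_j\chi_{2B}$, and for $\vec D\in\mathcal I$ estimate $|T(\vec f^{\vec D})(x)|$ for $x\in B$. Writing out the square function and applying Minkowski's integral inequality to pull the $L^2(dt/t)$ norm inside the spatial integrals, I would bound
\[
|T(\vec f^{\vec D})(x)|\le\int_{(\mathbb R^n)^m}\Big(\int_0^\infty|K_t(x,\vec y)|^2\frac{dt}{t}\Big)^{1/2}\prod_{j=1}^m|f_j^{d_j}(y_j)|\,d\vec y
\le A\int_{(\mathbb R^n)^m}\frac{\prod_{j=1}^m|f_j^{d_j}(y_j)|}{\big(\sum_{j=1}^m|x-y_j|\big)^{mn}}\,d\vec y,
\]
using the size half of the integral C--Z type~I condition (the scaling $K_t(x,\vec y)=t^{-mn}K(x/t,\vec y/t)$ makes the bound translation/dilation homogeneous, so the $1$ in the inhomogeneous kernel estimate drops out). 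For $j\in\sigma$ one has $d_j=\infty$, so $y_j\notin 2B$; decomposing the region $(2B)^c$ into annuli $2^{k+1}B\setminus 2^kB$ and using that $|x-y_j|\gtrsim 2^k r$ there, while for $j\in\sigma^c$ one simply integrates over $2B$ where $|x-y_j|\lesssim r$, the denominator $(\sum|x-y_j|)^{mn}$ is bounded below by a product of the relevant radii raised to appropriate powers; summing over the annuli for the coordinates in $\sigma$ and normalizing the integrals gives precisely
\[
|T(\vec f^{\vec D})(x)|\le C\prod_{j\in\sigma^c}\Big(\frac1{|2B|}\int_{2B}|f_j|\Big)\sum_{k=1}^\infty\frac{1}{2^{kn|\sigma^c|}}\prod_{j\in\sigma}\Big(\frac1{|2^{k+1}B|}\int_{2^{k+1}B\setminus 2^kB}|f_j|\Big),
\]
which is \eqref{AmalgamCondition} with $s=1$ (and hence with any $s\ge1$ by H\"older, since $1\le r\le q_j$ and the hypothesis allows $s=1$); the extra factor $k$ in \eqref{AmalgamCondition} is harmless. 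Then Theorem~\ref{OperatoronAmalgam}(i) gives the strong bound into $(L^q(\nu_{\vec\omega}),L^p)^\alpha$ when all $q_j>1$, and part~(ii) gives the weak bound into $(L^{q,\infty}(\nu_{\vec\omega}),L^p)^\alpha$ when some $q_j=1$, completing the proof.

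The main obstacle is purely bookkeeping: correctly tracking the annular decomposition when $\sigma$ is a proper nonempty subset of $\{1,\dots,m\}$, so that the near coordinates contribute the averages over $2B$ and the far coordinates contribute the decaying annular sum with the right power $2^{-kn|\sigma^c|}$ of the gain. One has to be a little careful that the geometric decay in $k$ coming from the denominator is strong enough after distributing it among the $|\sigma|$ far factors and that no decay is ``wasted'' — but since the exponent $mn$ in the kernel size estimate exceeds $n|\sigma|$ whenever $|\sigma|\le m$, there is always room to extract the stated $2^{-kn|\sigma^c|}$ while keeping the sum in $k$ summable. No smoothness condition on the kernel is needed for this theorem (only size), so the argument is genuinely a mild variant of the $g$-function case already carried out in Theorem~\ref{Littlewood-PaleygAmalgam}, and I would simply remark that the details are analogous.
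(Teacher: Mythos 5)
Your proposal follows the paper's own argument essentially verbatim: apply Minkowski's inequality to bring the $L^2(dt/t)$ norm inside, invoke the size half of the integral C--Z type I condition to dominate the kernel by $A\big(\sum_j|x-y_j|\big)^{-mn}$, decompose the far coordinates into annuli to verify condition \eqref{AmalgamCondition}, and then conclude via Theorem \ref{OperatoronAmalgam} combined with the weighted Lebesgue bounds of Lemma \ref{squarefunctionILebesgue}. The only cosmetic difference is your aside about rescaling $K_t$, which is unnecessary since the $L^2(dt/t)$ size bound is assumed directly for $K_t$ in Definition \ref{C-ZtypeI}; otherwise the proof is the same as the paper's.
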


\begin{proof}
By Minkowshi's inequality and the condition of C-Z type I, we get
\begin{align*}
|T(\vec{f}^D)(x)|
&\leq \int_{(\mathbb R^n)^m}\bigg(\int_0^\infty|K_t(x,y_1,\cdots,y_m)|^2\frac{dt}{t}\bigg)^\frac12\prod_{j=1}^m|f^{d_j}_j(y_j)|dy_1\cdots dy_m\\
&\leq \int_{(\mathbb R^n)^m}\frac{A}{\bigg(\sum_{j=1}^m|x-y_j|\bigg)^{mn}}\prod_{j=1}^m|f^{d_j}_j(y_j)|dy_1\cdots dy_m\\
&\leq C\prod_{j\in\sigma^c}\frac{1}{|2B|}\int_{2B}|f_j(y_j)|dy_j
\prod_{j\in\sigma}\sum_{k=1}^\infty\frac{1}{2^{kn|\sigma^c|}}\frac{1}{|2^{k+1}B|}\int_{2^{k+1}B\backslash 2^kB}|f_j(y_j)|dy_j,
\end{align*}
which shows that $T(\vec{f}^D)(x)$ satisfies condition (\ref{AmalgamCondition}).
\end{proof}

\begin{theorem}\label{squarefunctionIIAmalgam}
Let $T_\lambda$ be the operator defined in Definition \ref{C-ZtypeII} with the kernel satisfying the
integral condition of C-Z type II with $\lambda>2m$ and assume that $T_\lambda$ can be extended to a bounded operator for some $1\leq r_1,\ldots,r_m<\infty$ with $1/r=\sum_{j=1}^m1/{r}$..
Let $1\leq s\leq q_j \leq\alpha_j<p_j\leq\infty,\ j = 1,\ldots,m$ with
$1/q=\sum_{j=1}^m 1/{q_j}, 1/p=\sum_{j=1}^m 1/{p_j}, 1/{\alpha}=\sum_{j=1}^m 1/{\alpha_m}$
and  $p/{p_j}=q/{q_j}=\alpha/{\alpha_j}$, $\vec{\omega}\in A_{\vec{Q}\cap (A_{\infty})^m}$, then
$T_\lambda$ is a bounded operator from $(L^{q_1}(\omega_1),L^{p_1})^{\alpha_1}\times\cdots\times (L^{q_m}(\omega_m),L^{p_m})^{\alpha_m}$ to $(L^q(\nu_{\vec{\omega}}),L^p)^{\alpha}$ or $(L^{q,\infty}(\nu_{\vec{\omega}}),L^p)^{\alpha}$.
\end{theorem}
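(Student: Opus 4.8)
The plan is to reduce Theorem~\ref{squarefunctionIIAmalgam} to Theorem~\ref{OperatoronAmalgam} exactly as was done for $T$ in Theorem~\ref{squarefunctionIAmalgam}: by Lemma~\ref{squarefunctionIILebesgue} the operator $T_\lambda$ is already bounded from $L^{q_1}(\omega_1)\times\cdots\times L^{q_m}(\omega_m)$ to $L^q(\nu_{\vec\omega})$ (strong type when all $q_j>1$, weak type when some $q_j=1$) under the hypothesis $\vec\omega\in A_{\vec Q}$, so the only thing left to check is that $T_\lambda(\vec f^{\vec D})$ satisfies the pointwise Amalgam condition \eqref{AmalgamCondition} with $s=1$ on every ball $B=B(y,r)$ and for every $\vec D\in\mathcal I$. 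Once that local estimate is in hand, parts (i) and (ii) follow verbatim from the two cases of Theorem~\ref{OperatoronAmalgam}.

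First I would fix a ball $B=B(y,r)$, write $f_j=f_j^0+f_j^\infty$ as in the standing notation, and take $x\in B$ together with a tuple $\vec D$ whose $\infty$-indices form the nonempty set $\sigma$. Applying Minkowski's integral inequality to the defining formula \eqref{squarefunctionII} moves the $\|\cdot\|_{L^2(dzdt/t)}$ norm inside the $y$-integral, producing
\begin{align*}
|T_\lambda(\vec f^{\vec D})(x)|\leq\int_{(\mathbb R^n)^m}\bigg(\int\!\!\int_{\mathbb R_+^{n+1}}\Big(\frac{t}{t+|x-z|}\Big)^{\lambda n}|K_t(x,\vec y)|^2\frac{dzdt}{t^{n+1}}\bigg)^{1/2}\prod_{j=1}^m|f_j^{d_j}(y_j)|\,d\vec y,
\end{align*}
and the first size condition of C-Z type II bounds the inner factor by $A\big(\sum_j|x-y_j|\big)^{-mn}$. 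From here the argument is identical to the proof of Theorem~\ref{squarefunctionIAmalgam}: for $j\in\sigma^c$ one has $y_j\in 2B$ and simply integrates; for $j\in\sigma$ one splits $\mathbb R^n\setminus 2B=\bigcup_{k\ge1}(2^{k+1}B\setminus 2^kB)$, uses $|x-y_j|\gtrsim 2^k r$ on the $k$-th annulus, and sums, which yields
\begin{align*}
|T_\lambda(\vec f^{\vec D})(x)|\leq C\prod_{j\in\sigma^c}\frac{1}{|2B|}\int_{2B}|f_j|
\prod_{j\in\sigma}\sum_{k=1}^\infty\frac{1}{2^{kn|\sigma^c|}}\frac{1}{|2^{k+1}B|}\int_{2^{k+1}B\setminus 2^kB}|f_j|,
\end{align*}
which is precisely \eqref{AmalgamCondition} with $s=1$ (the factor $k$ there only helps). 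I would then invoke Theorem~\ref{OperatoronAmalgam}(i) in the case all $q_j>1$ and Theorem~\ref{OperatoronAmalgam}(ii) when some $q_j=1$ to conclude.

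The one genuinely new point compared with the type~I case is that the kernel bound now carries the extra weight $\big(t/(t+|x-z|)\big)^{\lambda n}$ and the integration is over the whole upper half-space $\mathbb R_+^{n+1}$ rather than over $t\in(0,\infty)$ alone; so the main (though still routine) obstacle is verifying that
$$
\int\!\!\int_{\mathbb R_+^{n+1}}\Big(\frac{t}{t+|x-z|}\Big)^{\lambda n}\frac{1}{\big(\sum_j|x-y_j|\big)^{2mn}}\cdot\frac{dzdt}{t^{n+1}}\leq \frac{C}{\big(\sum_j|x-y_j|\big)^{2mn}},
$$
i.e. that $\int\!\!\int_{\mathbb R_+^{n+1}}\big(t/(t+|x-z|)\big)^{\lambda n}\,dz\,dt/t^{n+1}$ converges; this is exactly where $\lambda>2m$ (in fact $\lambda n>n$ suffices here) is used, the $z$-integral over each dyadic shell $|x-z|\sim 2^\ell t$ contributing $2^{\ell n}2^{-\ell\lambda n}$ and the geometric series in $\ell$ converging, after which the remaining $dt/t$ integral of the constant is absorbed into the $L^2$-normalization already built into the first size hypothesis of Definition~\ref{C-ZtypeII}. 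Since that size hypothesis is stated with the $\mathbb R_+^{n+1}$-integral already performed, in practice one simply quotes it and the proof collapses to the type~I computation; for this reason the statement can be recorded with the remark that the proof is the same as that of Theorem~\ref{squarefunctionIAmalgam}, mutatis mutandis, and we omit the repeated details.
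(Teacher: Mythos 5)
Your proposal is correct and follows the paper's own route: apply Minkowski's inequality to move the $L^2(dz\,dt/t^{n+1})$ norm inside, quote the first size condition of C-Z type II to bound the kernel factor by $A\big(\sum_j|x-y_j|\big)^{-mn}$, verify \eqref{AmalgamCondition} exactly as in the type~I case, and conclude via Lemma~\ref{squarefunctionIILebesgue} and Theorem~\ref{OperatoronAmalgam}. The closing digression about checking convergence of $\int\!\!\int_{\mathbb R_+^{n+1}}\big(t/(t+|x-z|)\big)^{\lambda n}\,dz\,dt/t^{n+1}$ is unnecessary (and that particular integral would not converge on its own); as you yourself note, the size hypothesis of Definition~\ref{C-ZtypeII} already packages the half-space integral, so it is simply quoted, which is precisely what the paper does.
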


\begin{proof}
By Minkowshi's inequality and the condition of C-Z type II, we obtain
\begin{align*}
|T_\lambda(\vec{f}^D)(x)|
&\leq \int_{(\mathbb R^n)^m}\bigg(\int\int_{\mathbb{R}_+^{n+1}}\bigg(\frac{t}{t+|z|}\bigg)^{\lambda n}|K_t(x-z,\vec{y})|^2\frac{dzdt}{t^{n+1}}\bigg)^{\frac{1}{2}}\prod_{j=1}^m|f^{d_j}_j(y_j)|dy_1\cdots dy_m\\
&\leq \int_{(\mathbb R^n)^m}\frac{A}{\bigg(\sum_{j=1}^m|x-y_j|\bigg)^{mn}}\prod_{j=1}^m|f^{d_j}_j(y_j)|dy_1\cdots dy_m\\
&\leq C\prod_{j\in\sigma^c}\frac{1}{|2B|}\int_{2B}|f_j(y_j)|dy_j
\prod_{j\in\sigma}\sum_{k=1}^\infty\frac{1}{2^{kn|\sigma^c|}}\frac{1}{|2^{k+1}B|}\int_{2^{k+1}B\backslash 2^kB}|f_j(y_j)|dy_j,
\end{align*}
which proves that $T_\lambda(\vec{f}^D)(x)$ satisfies condition (\ref{AmalgamCondition}).
\end{proof}

\subsection{Multilinear Marcinkiewicz functions \label{s3.2}}
\hskip\parindent

\textbf{I: Convolution Type}
\hskip\parindent

Based on the Coifman-Meyer multilinear singular integral $T(f_1,\ldots, f_m)(x)=(K*(f_1\otimes\cdots\otimes f_m)(x)$ and Stein's work in \cite{S1958}, Chen, Xue and Yabuta \cite{CXY2015} introduced the multilinear Marcinkiewicz integral as follows.
\begin{definition}\textbf{Multilinear Marcinkiewicz Integral}
Let $\Omega$ be a function defined on $(\mathbb{R}^n)^m$ with the following properties:
\begin{itemize}
\item[(i)]$\Omega$ is homogeneous of degree 0, i.e.,
$\Omega(\ell\vec{y})=\Omega(\vec{y})$ for any $\ell>0$ and $\vec{y}=(y_1, \ldots,y_m)\in(\mathbb{R}^n)^m$;
\item[(ii)]$\Omega$ is Lipschitz continuous on $(\mathbb{S}^{n-1})^m$, i.e., there are $\beta\in(0,1)$ and $C>0$ such that for any $\vec{\theta}=(\eta_1,\ldots,\eta_m), \vec{\xi}=(\xi_1,\ldots,\xi_m)\in(\mathbb{R}^n)^m$,
$$|\Omega(\vec{\eta})-\Omega(\vec{\xi})|\leq C|\vec{\theta'}-\vec{\xi'}|,$$
where $(\eta_1, \ldots, \eta_m)'=\frac{(\eta_1,\ldots,\eta_m)}{|\eta_1|+\cdots+|\eta_m|}$;
\item[(iii)]The The integral of $\Omega$ on $(B(0, 1))^m$ vanishes,
$$ \int_{B(0,1)^m}\frac{\Omega(\vec{y})}{|\vec{y}|^{m(n-1)}}d\vec{y}=0.$$
\end{itemize}
For any $\vec{f}=(f_1,\ldots,f_m)\in \mathcal{S}(\mathbb{R}^n)\times\cdots\times\mathcal{S}(\mathbb{R}^n)$, the multilinear Marcinkiewicz integral of convolution type $\mu$ is defined by
\begin{align}\label{ConvolutedMarcinkiewicz}
\mu(\vec{f})(x):=\bigg(\int_0^\infty
\bigg|\frac{1}{t^m}\int_{(\mathbb{R}^n)^m}\frac{\Omega(x-y_1,\ldots,x-y_m)\chi_{(B(x,t))^m}}{|(x-y_1,\ldots,x-y_m)|^{m(n-1)}}\prod_{j=1}^mf_j(y_j)d\vec{y}\bigg|^2\frac{dt}{t}\bigg)^{1/2}
\end{align}
\end{definition}

Chen, Xue and Yabuta utilized real variable techniques to show that $\mu$ is a bounded operator on weighted Lebesgue space.
\begin{lemma}\label{ConvolutionMarcinkiewiczLebesgue}\cite{CXY2015}
Assume that $\mu$ can be extended to a bounded operator for some $1\leq \theta_1,\ldots, \theta_m$ with
$1/{\theta}=\sum_{j=1}^m1/{\theta_j}<\infty$. Suppose $\vec\omega\in A_{\vec{Q}}$ and $1/q=\sum_{j=1}^m 1/{q_j}$ with $1\leq q_j <\infty,\ j = 1,\ldots,m$,
then $\mu$ is a bounded operators from $L^{q_1}(\omega_1)\times\cdots\times L^{q_m}(\omega_m)$ to $L^q(\nu_{\vec{\omega}})$ or $L^{q,\infty}(\nu_{\vec{\omega}})$.
\end{lemma}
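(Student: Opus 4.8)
The plan is to recognize $\mu$ as an $L^2((0,\infty),dt/t)$-valued multilinear Calder\'on--Zygmund operator and then run the multilinear Calder\'on--Zygmund machinery of Lerner--Ombrosi--P\'erez--Torres--Trujillo-Gonz\'alez \cite{LOPTT2009}. Writing $\mathbb{H}=L^2((0,\infty),dt/t)$ and $K(x,\vec y)=\Omega(x-y_1,\ldots,x-y_m)|(x-y_1,\ldots,x-y_m)|^{-m(n-1)}$, I would set $\vec K(x,\vec y)=\{t^{-m}K(x,\vec y)\chi_{B(x,t)^m}(\vec y)\}_{t>0}$ and $U(\vec f)(x)=\int_{(\mathbb R^n)^m}\vec K(x,\vec y)\prod_{j=1}^m f_j(y_j)\,d\vec y$, so that $\mu(\vec f)(x)=\|U(\vec f)(x)\|_{\mathbb{H}}$. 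In this language the hypothesis ``$\mu:L^{\theta_1}\times\cdots\times L^{\theta_m}\to L^\theta$'' is exactly one genuine boundedness statement for the $\mathbb{H}$-valued $m$-linear operator $U$, which is the single analytic input the Calder\'on--Zygmund theory needs beyond kernel estimates.

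The first task is to verify that $\vec K$ satisfies the multilinear Calder\'on--Zygmund size and regularity conditions measured in the $\mathbb{H}$-norm. For the size bound, put $\rho=\max_j|x-y_j|$; the truncation $\chi_{B(x,t)^m}$ forces $t>\rho$, and since $\Omega$ is bounded on $(\mathbb S^{n-1})^m$ one gets $\|\vec K(x,\vec y)\|_{\mathbb{H}}\le C\rho^{-m(n-1)}\big(\int_\rho^\infty t^{-2m}\,dt/t\big)^{1/2}\le C\rho^{-mn}$, which is comparable to $(\sum_j|x-y_j|)^{-mn}$. For the regularity bounds one replaces $x$ by a nearby point $z$ (or one $y_j$ by $y_j'$) and must control three contributions to $\|\vec K(z,\vec y)-\vec K(x,\vec y)\|_{\mathbb{H}}$: the change in $\Omega$, which by its Lipschitz continuity on $(\mathbb S^{n-1})^m$ is $\le C|z-x|/\rho$ after normalizing the arguments; the change in the factor $|(x-\vec y)|^{-m(n-1)}$, handled by the mean value theorem; and the symmetric difference of the truncation regions $B(x,t)^m$ and $B(z,t)^m$.

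The symmetric-difference term is the step I expect to be the main obstacle, and it is precisely what distinguishes this Marcinkiewicz square function from a classical Calder\'on--Zygmund operator. The disagreement between $\chi_{B(x,t)^m}$ and $\chi_{B(z,t)^m}$ concentrates the $t$-integral on a thin shell $t\in(\rho,\rho+C|z-x|)$, and one must integrate the homogeneity factor $t^{-m}$ against $dt/t$ over this shell and extract the gain $(|z-x|/\rho)^{\gamma}$ for a suitable $\gamma\in(0,1)$, yielding the bound $C|z-x|^\gamma(\sum_j|x-y_j|)^{-mn-\gamma}$; the degree-zero homogeneity and the Lipschitz modulus of $\Omega$ (properties (i)--(ii)) are what produce this gain, while the cancellation (iii) is what secures the a priori convergence of the square integral underlying the assumed $L^\theta$ bound.

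With the kernel conditions and the assumed strong bound in hand, the remaining steps are the standard multilinear Calder\'on--Zygmund deductions. A multilinear Calder\'on--Zygmund decomposition of the $f_j$ upgrades the assumed bound to the endpoint estimate $\mu:L^1\times\cdots\times L^1\to L^{1/m,\infty}$, which is what yields the weak-type conclusion $L^{q,\infty}(\nu_{\vec\omega})$ when some $q_j=1$. For $1<q_1,\ldots,q_m<\infty$ I would prove the Fefferman--Stein sharp maximal estimate $M^{\#}_{\delta}(\mu(\vec f))(x)\le C\,\mathcal M(\vec f)(x)$ for $0<\delta<1/m$, where $\mathcal M(\vec f)(x)=\sup_{Q\ni x}\prod_{j=1}^m|Q|^{-1}\int_Q|f_j|$; the local part of the oscillation is dominated through Kolmogorov's inequality and the weak endpoint just obtained, while the tail is summed using the kernel regularity. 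Finally, after an a priori finiteness reduction via truncation and density, the chain $\|\mu(\vec f)\|_{L^q(\nu_{\vec\omega})}\le C\|M_\delta(\mu(\vec f))\|_{L^q(\nu_{\vec\omega})}\le C\|M^{\#}_{\delta}(\mu(\vec f))\|_{L^q(\nu_{\vec\omega})}\le C\|\mathcal M(\vec f)\|_{L^q(\nu_{\vec\omega})}$, combined with the boundedness of $\mathcal M$ from $\prod_j L^{q_j}(\omega_j)$ to $L^q(\nu_{\vec\omega})$ for $\vec\omega\in A_{\vec Q}$ established in \cite{LOPTT2009}, gives the strong bound and completes the proof.
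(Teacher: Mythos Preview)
The paper does not supply its own proof of this lemma; it is quoted verbatim from \cite{CXY2015} and used as a black box. Your sketch is precisely the strategy carried out in that reference: view $\mu$ as an $\mathbb{H}=L^2((0,\infty),dt/t)$-valued $m$-linear Calder\'on--Zygmund operator, verify the $\mathbb{H}$-valued size and smoothness kernel estimates, and then feed the assumed $L^{\theta_1}\times\cdots\times L^{\theta_m}\to L^{\theta}$ bound into the machinery of \cite{LOPTT2009} (endpoint weak $(1,\ldots,1;1/m)$ via Calder\'on--Zygmund decomposition, sharp maximal control $M^\#_\delta(\mu(\vec f))\lesssim \mathcal M(\vec f)$, and the $A_{\vec Q}$ bound for $\mathcal M$). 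So your plan matches the source argument in all essentials.

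Two small clarifications. First, your symmetric-difference step is correct but can be made sharper: since $\chi_{B(x,t)^m}(\vec y)=\mathbf 1_{\{t>\max_j|x-y_j|\}}$, the difference $\chi_{B(x,t)^m}(\vec y)-\chi_{B(z,t)^m}(\vec y)$ is supported on a $t$-interval of length at most $|x-z|$ near $\rho=\max_j|x-y_j|$, and direct computation of $\int_{\rho-|x-z|}^{\rho+|x-z|}t^{-2m-1}dt$ together with $|K|\le C\rho^{-m(n-1)}$ gives the regularity estimate with exponent $\gamma=1/2$; no appeal to the Lipschitz condition on $\Omega$ is needed for this particular piece. Second, the cancellation hypothesis (iii) plays no role in the present lemma, since the single strong bound is \emph{assumed}; (iii) is what one uses to verify that assumption in concrete cases, not to run the weighted theory afterward.
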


Therefore we consider the behavior of multilinear Mincinkiewicz $\mu$ on weighted Amalgam space.
\begin{theorem}\label{ConvolutionMarcinkiewiczAmalgam}
Assume that $\mu$ can be extended to a bounded operator for some $1\leq \theta_1,\ldots, \theta_m$ with
$1/{\theta}=\sum_{j=1}^m1/{\theta_j}<\infty$. Let $1\leq q_j \leq\alpha_j<p_j\leq\infty,\ j = 1,\ldots,m$ with
$1/q=\sum_{j=1}^m 1/{q_j}, 1/p=\sum_{j=1}^m 1/{p_j}, 1/{\alpha}=\sum_{j=1}^m 1/{\alpha_m}$
and  $p/{p_j}=q/{q_j}=\alpha/{\alpha_j}$. If $\vec{\omega}\in A_{\vec{Q}\cap (A_\infty)^m}$, then $\mu$ is a bounded operator from $(L^{q_1}(\omega_1),L^{p_1})^{\alpha_1}\times\cdots\times (L^{q_m}(\omega_m),L^{p_m})^{\alpha_m}$ to $(L^q(\nu_{\vec{\omega}}),L^p)^{\alpha}$ or $(L^{q,\infty}(\nu_{\vec{\omega}}),L^p)^{\alpha}$.
\end{theorem}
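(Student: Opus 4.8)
The plan is to derive Theorem~\ref{ConvolutionMarcinkiewiczAmalgam} from Theorem~\ref{OperatoronAmalgam} together with the weighted Lebesgue estimates of Lemma~\ref{ConvolutionMarcinkiewiczLebesgue}, following the same template used for the $g$-function (Theorem~\ref{Littlewood-PaleygAmalgam}) and for the square functions of C--Z type (Theorems~\ref{squarefunctionIAmalgam} and \ref{squarefunctionIIAmalgam}). Concretely, one takes $s=1$, so that the hypothesis $\vec\omega\in A_{\vec Q}\cap(A_\infty)^m$ is exactly $\vec\omega\in A_{\vec Q/s}\cap(A_\infty)^m$, and it then suffices to check that $\mu$ obeys the pointwise estimate \eqref{AmalgamCondition}. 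So fix a ball $B=B(y,r)$, write $f_j=f_j^0+f_j^\infty$ with $f_j^0=f_j\chi_{2B}$, $f_j^\infty=f_j\chi_{(2B)^c}$, and take $\vec D=(d_1,\dots,d_m)\in\mathcal I$ with $\sigma=\{j:d_j=\infty\}\ne\emptyset$.

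For $x\in B$, Minkowski's integral inequality (pulling the $L^2(dt/t)$-norm inside the $\vec y$-integral) together with the identity $\chi_{(B(x,t))^m}(\vec y)=\chi_{\{t>\max_j|x-y_j|\}}$ give
\begin{align*}
|\mu(\vec f^{\vec D})(x)|\le\int_{(\mathbb R^n)^m}\Bigl(\int_{\max_j|x-y_j|}^\infty\frac{dt}{t^{2m+1}}\Bigr)^{1/2}\frac{|\Omega(x-y_1,\dots,x-y_m)|}{|(x-y_1,\dots,x-y_m)|^{m(n-1)}}\prod_{j=1}^m|f_j^{d_j}(y_j)|\,d\vec y.
\end{align*}
The bracketed $t$-integral equals $\frac{1}{2m}\bigl(\max_j|x-y_j|\bigr)^{-2m}$; since $\Omega$ is homogeneous of degree $0$ and Lipschitz, hence bounded, on the compact set $(\mathbb S^{n-1})^m$, one has $|\Omega|\le C$; and $\max_j|x-y_j|\approx|(x-y_1,\dots,x-y_m)|\approx\sum_{j=1}^m|x-y_j|$. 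Plugging these in yields
\begin{align*}
|\mu(\vec f^{\vec D})(x)|\le C\int_{(\mathbb R^n)^m}\frac{\prod_{j=1}^m|f_j^{d_j}(y_j)|}{\bigl(\sum_{j=1}^m|x-y_j|\bigr)^{mn}}\,d\vec y.
\end{align*}

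This is precisely the kernel estimate already established (via the integral C--Z conditions) in the proofs of Theorems~\ref{squarefunctionIAmalgam} and~\ref{squarefunctionIIAmalgam}, so the remaining verification of \eqref{AmalgamCondition} with $s=1$ is word-for-word the same: for $j\in\sigma^c$ integrate $y_j$ over $2B$ to produce a factor $|2B|^{-1}\int_{2B}|f_j|$; for $j\in\sigma$ decompose $(2B)^c=\bigcup_{k\ge1}(2^{k+1}B\setminus 2^kB)$, use $|x-y_j|\ge 2^{k-1}r$ for $y_j\in 2^{k+1}B\setminus 2^kB$ and $x\in B$, and sum in $k$, producing $\sum_{k\ge1}2^{-nk|\sigma^c|}|2^{k+1}B|^{-1}\int_{2^{k+1}B\setminus2^kB}|f_j|$ (the extra linear factor $k$ allowed in \eqref{AmalgamCondition} is more than enough). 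Hence $\mu$ satisfies \eqref{AmalgamCondition}, and parts (i) and (ii) of Theorem~\ref{OperatoronAmalgam}, fed with the corresponding weighted Lebesgue bounds of Lemma~\ref{ConvolutionMarcinkiewiczLebesgue}, complete the proof.

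The only point that needs a little attention is the truncation $\chi_{(B(x,t))^m}$: it is this cutoff, rather than any decay or smoothness of the kernel in $t$, that makes $\int_0^\infty t^{-2m}\chi_{\{t>\max_j|x-y_j|\}}\,dt/t$ convergent and of the correct homogeneity $(\max_j|x-y_j|)^{-2m}$. I would also note that for the verification of \eqref{AmalgamCondition} one uses only the boundedness of $\Omega$; neither its Lipschitz regularity nor the cancellation condition (iii) is needed here, those entering solely through the assumed weighted Lebesgue bounds of Lemma~\ref{ConvolutionMarcinkiewiczLebesgue}.
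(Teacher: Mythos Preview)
Your proof is correct and follows essentially the same approach as the paper's: verify the pointwise condition \eqref{AmalgamCondition} with $s=1$ by applying Minkowski's inequality and using the truncation $\chi_{(B(x,t))^m}$ to reduce to the integral $\int_{\max_j|x-y_j|}^\infty t^{-2m-1}\,dt$, then invoke Theorem~\ref{OperatoronAmalgam} together with Lemma~\ref{ConvolutionMarcinkiewiczLebesgue}. Your write-up is in fact somewhat cleaner than the paper's (which states the lower bound on $t$ via $\min_j|x-y_j|$ rather than $\max_j|x-y_j|$), and your remark that only boundedness of $\Omega$ is used in this step is a helpful clarification.
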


\begin{proof}
Let $x\in B=B(y,t)$. Since there exists at lease one $y_j\in B(x,t)\bigcap(2^{k+1}B\backslash 2^kB)\neq\emptyset$ for $k=1,2,\ldots$, we have $t\geq\min_{j=1,\cdot,m} |y_j-x|\geq 2^{k-1}r.$ Applying Minkowshi's inequality and the size condition of $\Omega,$ it leads us to
\begin{align*}
&\mu_K(\vec{f}^D)(x)\\
&\leq\int_{(\mathbb R^n)^m}\bigg(\int_0^\infty\bigg|\frac1{t^m}\frac{\Omega(x-y_1,\cdots,x-y_m)\chi_{B(x,t)^m}}{|(x-y_1,\cdots,x-y_m)|^{m(n-1)}}\bigg|^2\frac{dt}{t}\bigg)^\frac12\prod_{j=1}^m|f^{d_j}_j(y_j)|dy_1\cdots dy_m\\
&\leq C\int_{(\mathbb R^n)^m}\frac1{(2^kr)^{m(n-1)}}\bigg(\int_{2^{k-1}r}^\infty\frac{dt}{t^{2m+1}}\bigg)^\frac12\prod_{j=1}^m|f^{d_j}_j(y_j)|dy_1\cdots dy_m\\
&\leq C\prod_{j\in\sigma^c}\bigg(\frac{1}{|2B|}\int_{2B}|f_j(y_j)|dy_j\bigg)\prod_{j\in\sigma}\sum_{k=1}^\infty\frac{1}{2^{kn|\sigma^c|}}\bigg(\frac{1}{|2^{k+1}B|}\int_{2^{k+1}B\backslash 2^kB}|f_j(y_j)|dy_j\bigg).
\end{align*}
Thus by Theorem \ref{OperatoronAmalgam}, we finish the proof of Theorem \ref{ConvolutionMarcinkiewiczAmalgam}.
\end{proof}

\textbf{II: Nonconvolution Type}
\hskip\parindent

In \cite{XY2015}, Xue and Yan also introduced multilinear Marcinkiewicz function with the kernel of nonconvolution type as a example of integral smooth condition of C-Z type I.
\begin{definition}\cite{XY2015}
Let $K$ be a function defined on $\mathbb{R}^n\times\mathbb{R}^{mn}$ with $\text{supp}K \subset \mathfrak{B}:=\{(x,y_1,\ldots,y_m):\sum_{j=1}^m|x-y_j|^2\leq 1\}$. $K$ is called a multilinear Marcinkiewicz kernel
if for some $\delta\in(0,mn)$ and some positive constants $A$, $\gamma_0$ and $B_1>2$,
\begin{align*}
|K(x,\vec{y})|\leq\frac{A}{\bigg(\sum_{j=1}^m|x-y_j|\bigg)^{mn-\delta}},
\end{align*}
\begin{align*}
|K(z,\vec{y})-K(x,\vec{y})|\leq\frac{A|z-x|^{\gamma_0}}{\bigg(\sum_{j=1}^m|x-y_j|\bigg)^{mn-\delta+\gamma_0}},
\end{align*}
whenever $(x, y_1,\ldots,y_m)\subset \mathfrak{B}$ and $|x-z|\leq\max_{j=1}^m|x-y_j|/{B_1}$;
and
\begin{align*}
|K(z,y_1,\ldots,y_j,\ldots,y_m)-K(x,y_1,\ldots,y_j',\ldots,y_m)|
\leq\frac{A|y_j-y_j'|^{\gamma_0}}{\bigg(\sum_{j=1}^m|x-y_j|\bigg)^{mn-\delta+\gamma_0}},
\end{align*}
whenever $(x, y_1,\ldots,y_m)\subset \mathfrak{B}$ and $|y_j-y_j'|\leq |x-y_j|/{B_1}$, for $j=1,\ldots,m$.
\end{definition}
The multilinear Marcinkiewicz integral of nonconvolution type $\mu_K$ is defined by
\begin{align}\label{NonconvolutionMarcinkiewicz}
\mu_K(\vec{f})(x):=\bigg(\int_0^\infty\bigg|\int_{(\mathbb{R}^n)^m}K_t(x,y_1,\ldots,y_m)\prod_{j=1}^mf_j(y_j)d\vec{y}\bigg|^2\frac{dt}{t}\bigg)^{1/2}.
\end{align}

\begin{lemma}\label{NonconvolutionMarcinkiewiczLebesgue}\cite{XY2015}
Suppose that for some $1\leq s_1,\ldots,s_m<\infty$ with $1/s=\sum_{j=1}^m1/{s}$, $\mu_k$
is bounded from $L^{s_1}(\mathbb{R}^n)\times\cdots\times L^{s_m}(\mathbb{R}^n)$. Let $1\leq q_j <\infty,\ j = 1,\ldots,m$ with
$1/q=\sum_{j=1}^m 1/{q_j}$ and $\vec\omega\in A_{\vec{Q}}$, then $\mu_K$ is a bounded operators from $L^{q_1}(\omega_1)\times\cdots\times L^{q_m}(\omega_m)$ to $L^q(\nu_{\vec{\omega}})$ or $L^{q,\infty}(\nu_{\vec{\omega}})$.
\end{lemma}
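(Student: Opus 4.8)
The plan is to recognise $\mu_K$ as an instance of the square function $T$ of \eqref{squarefunctionI} built from the dilated family $K_t(x,\vec y):=t^{-mn}K(x/t,\vec y/t)$, to check that this family satisfies the integral smoothness condition of C--Z type~I (Definition~\ref{C-ZtypeI}), and then to quote Lemma~\ref{squarefunctionILebesgue}; the only real work is the kernel verification, which is the argument carried out in \cite{XY2015}. Throughout write $\rho:=\rho(x,\vec y)=\sum_{j=1}^m|x-y_j|$.

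The crucial point is the interaction between the compact support $\text{supp}\,K\subset\mathfrak B$ and the dilation: $K_t(x,\vec y)$ vanishes unless $\sum_j|x-y_j|^2\le t^2$, hence unless $t\ge c\rho$ for some $c>0$ depending only on $m$. On that range the homogeneity $-(mn-\delta)$ of the size bound for $K$ gives $|K_t(x,\vec y)|\le A\,t^{-\delta}\rho^{-(mn-\delta)}$, and since $\delta>0$ the square integral in $dt/t$ converges at $t=\infty$, so
\begin{align*}
\Big(\int_0^\infty|K_t(x,\vec y)|^2\tfrac{dt}{t}\Big)^{1/2}\le\frac{CA}{\rho^{mn-\delta}}\Big(\int_{c\rho}^\infty t^{-2\delta-1}\,dt\Big)^{1/2}=\frac{CA}{\rho^{mn}},
\end{align*}
which is exactly the size inequality of Definition~\ref{C-ZtypeI}. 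The two Hölder inequalities are treated identically: the rescaled smoothness bounds of $K$ give, on $t\ge c\rho$, $|K_t(z,\vec y)-K_t(x,\vec y)|\le A|z-x|^{\gamma_0}t^{-\delta}\rho^{-(mn-\delta+\gamma_0)}$ (and both terms vanish off that range, once one checks that the hypotheses $|z-x|\le\max_j|x-y_j|/B_1$ and $|y_j-y_j'|\le|x-y_j|/B_1$ keep the two relevant values of $\rho$ comparable), and the same $dt/t$ integration restores an extra factor $\rho^{-\delta}$, producing the C--Z type~I smoothness estimates with exponent $\gamma=\gamma_0$. Thus $\{K_t\}$ satisfies the integral condition of C--Z type~I.

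With the kernel verified, the conclusion is a direct application of Lemma~\ref{squarefunctionILebesgue}: the a priori hypothesis that $\mu_K$ is bounded on some product Lebesgue space $L^{s_1}\times\cdots\times L^{s_m}$ supplies the input needed by the multilinear Calder\'on--Zygmund/multiple-weight machinery, and it then yields the strong bound $L^{q_1}(\omega_1)\times\cdots\times L^{q_m}(\omega_m)\to L^q(\nu_{\vec\omega})$ for $\vec\omega\in A_{\vec Q}$ with all $q_j>1$ and the weak endpoint into $L^{q,\infty}(\nu_{\vec\omega})$ when some $q_j=1$. Alternatively, for a self-contained argument one can view $\mu_K(\vec f)(x)$ as the $L^2((0,\infty),dt/t)$-norm of $t\mapsto\int_{(\mathbb R^n)^m}K_t(x,\vec y)\prod_{j=1}^mf_j(y_j)\,d\vec y$, observe that the estimates above are precisely the Banach-valued multilinear Calder\'on--Zygmund kernel conditions, and invoke the weighted multilinear Calder\'on--Zygmund theory with $A_{\vec Q}$ weights directly. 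In either route the only non-routine obstacle is the dilation bookkeeping of the second paragraph --- seeing that the support $\mathfrak B$ under scaling becomes the cutoff $t\gtrsim\rho(x,\vec y)$ and that the exponent arithmetic exactly restores the homogeneity $-mn$ demanded by Definition~\ref{C-ZtypeI}.
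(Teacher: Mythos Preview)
Your strategy is exactly the one the paper (via \cite{XY2015}) uses: verify that the dilated family $K_t(x,\vec y)=t^{-mn}K(x/t,\vec y/t)$ satisfies the integral smoothness condition of C--Z type~I and then invoke Lemma~\ref{squarefunctionILebesgue}. The paper does not reprove the lemma here but explicitly cites Theorem~1.3 of \cite{XY2015} for the kernel verification, and your size computation is the correct one.

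There is one small oversight in your smoothness verification. Your claim that ``both terms vanish off that range'' once $\rho(x,\vec y)$ and $\rho(z,\vec y)$ are comparable is not quite enough: the support cutoffs for $K_t(x,\vec y)$ and $K_t(z,\vec y)$ sit at slightly different values of $t$ (roughly $(\sum_j|x-y_j|^2)^{1/2}$ versus $(\sum_j|z-y_j|^2)^{1/2}$), so there is a thin shell in $t$ where exactly one of the two terms survives and the pointwise smoothness bound on $K$ is unavailable. On that shell one must use the size bound instead, and the resulting contribution limits the attainable H\"older exponent; this is why \cite{XY2015} (and the remark in the proof of Theorem~\ref{NonconvolutionMarcinkiewiczAmalgam}) records the condition with $0<\gamma\le\min\{\delta,\gamma_0\}$ rather than $\gamma=\gamma_0$. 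This does not affect the conclusion---any positive $\gamma$ suffices for Lemma~\ref{squarefunctionILebesgue}---but you should account for the boundary shell rather than wave it away with comparability.
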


This non-convolution type Marcinkiewicz integral $\mu_K$ is also a bounded operator on weighted Amalgam space.

\begin{theorem}\label{NonconvolutionMarcinkiewiczAmalgam}
Suppose that for some $1\leq s_1,\ldots,s_m<\infty$ with $1/s=\sum_{j=1}^m1/{s}$, $\mu_k$
is bounded from $L^{s_1}(\mathbb{R}^n)\times\cdots\times L^{s_m}(\mathbb{R}^n)$. Let $1\leq q_j \leq\alpha_j<p_j\leq\infty,\ j = 1,\ldots,m$ with
$1/q=\sum_{j=1}^m 1/{q_j}, 1/p=\sum_{j=1}^m 1/{p_j}, 1/{\alpha}=\sum_{j=1}^m 1/{\alpha_m}$
and  $p/{p_j}=q/{q_j}=\alpha/{\alpha_j}$, $\vec{\omega}\in A_{\vec{Q}}$.
Then $\mu_K$ is a bounded operator from $(L^{q_1}(\omega_1),L^{p_1})^{\alpha_1}\times\cdots\times (L^{q_m}(\omega_m),L^{p_m})^{\alpha_m}$ to $(L^q(\nu_{\vec{\omega}}),L^p)^{\alpha}$ or $(L^{q,\infty}(\nu_{\vec{\omega}}),L^p)^{\alpha}$.
\end{theorem}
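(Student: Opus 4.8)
The plan is to reduce the claim to Theorem \ref{OperatoronAmalgam} by verifying that $\mu_K$ satisfies the pointwise estimate \eqref{AmalgamCondition} with $s=1$. By hypothesis $\mu_K$ is bounded from $L^{s_1}\times\cdots\times L^{s_m}$ and, more to the point, Lemma \ref{NonconvolutionMarcinkiewiczLebesgue} gives the weighted strong/weak boundedness from $L^{q_1}(\omega_1)\times\cdots\times L^{q_m}(\omega_m)$ to $L^q(\nu_{\vec\omega})$ (resp. $L^{q,\infty}(\nu_{\vec\omega})$) for $\vec\omega\in A_{\vec Q}$. Thus once \eqref{AmalgamCondition} is checked, parts (i) and (ii) of Theorem \ref{OperatoronAmalgam} yield the strong and weak amalgam bounds respectively, exactly as in the proofs of Theorems \ref{squarefunctionIAmalgam} and \ref{squarefunctionIIAmalgam}.

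To establish \eqref{AmalgamCondition}, fix a ball $B=B(y,r)$, a point $x\in B$, a multi-index $\vec D\in\mathcal I$, and use the decomposition $f_j=f_j^0+f_j^\infty$. First I would apply Minkowski's integral inequality to move the $L^2(dt/t)$ norm inside the spatial integral:
\begin{align*}
|\mu_K(\vec f^{\vec D})(x)|\leq\int_{(\mathbb R^n)^m}\bigg(\int_0^\infty|K_t(x,\vec y)|^2\frac{dt}{t}\bigg)^{1/2}\prod_{j=1}^m|f_j^{d_j}(y_j)|\,dy_1\cdots dy_m.
\end{align*}
Since $K_t(x,\vec y)=t^{-mn}K(x/t,\vec y/t)$ and $\operatorname{supp}K\subset\mathfrak B$, the variable $K_t$ is supported where $\sum_j|x-y_j|^2\leq t^2$, and the size bound $|K(x,\vec y)|\leq A(\sum_j|x-y_j|)^{-(mn-\delta)}$ gives $|K_t(x,\vec y)|\leq A t^{-mn}(\sum_j|x-y_j|/t)^{-(mn-\delta)}=At^{-\delta}(\sum_j|x-y_j|)^{-(mn-\delta)}$ on that region. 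Carrying out the $t$-integral over $t\geq(\sum_j|x-y_j|^2)^{1/2}\approx\sum_j|x-y_j|$ of $t^{-2\delta-1}$ (which converges since $\delta>0$) produces the clean kernel bound
\begin{align*}
\bigg(\int_0^\infty|K_t(x,\vec y)|^2\frac{dt}{t}\bigg)^{1/2}\leq\frac{C}{\big(\sum_{j=1}^m|x-y_j|\big)^{mn}}.
\end{align*}
This is precisely the size estimate already exploited in the proof of Theorem \ref{squarefunctionIAmalgam}.

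From here the argument is identical to that proof: for $j\in\sigma^c$ one has $y_j\in 2B$ and $|x-y_j|\leq 4r$, contributing a factor $\lesssim |2B|^{-1}\int_{2B}|f_j|$ after estimating $(\sum_j|x-y_j|)^{-mn}$ from below using the annular variables; for $j\in\sigma$ one has $y_j\in 2^{k+1}B\setminus 2^kB$, so $|x-y_j|\gtrsim 2^k r$, and summing over the annuli gives $\sum_k 2^{-kn|\sigma^c|}|2^{k+1}B|^{-1}\int_{2^{k+1}B\setminus 2^kB}|f_j|$, which reproduces \eqref{AmalgamCondition} with $s=1$ (the extra factor $k$ in \eqref{AmalgamCondition} is harmless). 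Invoking Theorem \ref{OperatoronAmalgam}(i),(ii) with $\vec\omega\in A_{\vec Q}=A_{\vec Q/1}$ completes the proof. The only mild subtlety—the main thing to be careful about—is the bookkeeping when $\sigma^c=\emptyset$ (all $d_j=\infty$): then there is no "local" product and one must check that the telescoping over $k$ still converges, which it does because $\delta>0$ forces genuine decay in the $t$-integral; no new idea is required beyond what appears in Theorems \ref{squarefunctionIAmalgam}--\ref{squarefunctionIIAmalgam}.
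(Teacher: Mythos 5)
Your proposal is correct and follows essentially the same route as the paper: verify the pointwise condition \eqref{AmalgamCondition} with $s=1$ via Minkowski's inequality and the $L^2(dt/t)$ size bound $\big(\int_0^\infty|K_t(x,\vec y)|^2\frac{dt}{t}\big)^{1/2}\leq C\big(\sum_{j=1}^m|x-y_j|\big)^{-mn}$, then argue as in Theorem \ref{squarefunctionIAmalgam} and invoke Theorem \ref{OperatoronAmalgam} together with Lemma \ref{NonconvolutionMarcinkiewiczLebesgue}. The only cosmetic difference is that you derive this kernel bound directly from the support and size conditions of the Marcinkiewicz kernel, whereas the paper simply cites Theorem 1.3 of \cite{XY2015}, which asserts exactly that $K_t$ satisfies the integral condition of C-Z type I.
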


\begin{proof}
By Theorem 1.3 of \cite{XY2015}, we know that $K_t$ satisfies the
integral smooth condition of C-Z type I with $0<\gamma\leq \min\{\delta,\gamma_0\}$, therefore
there exist positive constants $A>1$ such that
\begin{align*}
\bigg(\int_0^\infty|K_t(x,\vec{y})|^2\frac{dt}{t}\bigg)^{\frac{1}{2}}\leq\frac{A}{\bigg(\sum_{j=1}^m|x-y_j|\bigg)^{mn}}.
\end{align*}
Then by a similar proof to Theorem \ref{squarefunctionIAmalgam}, we get
$$
|\mu_K(\vec{f}^D)(x)|
\leq C\leq\prod_{j\in\sigma^c}\frac{1}{|2B|}\int_{2B}|f_j(y_j)|dy_j
\prod_{j\in\sigma}\sum_{k=1}^\infty\frac{1}{2^{kn\sigma^c}}\frac{1}{|2^{k+1}B|}\int_{2^{k+1}B\backslash 2^kB}|f_j(y_j)|dy_j.
$$
Thus by Theorem \ref{OperatoronAmalgam}, we finish the proof of Theorem \ref{NonconvolutionMarcinkiewiczAmalgam}.
\end{proof}

\begin{remark}\label{MultilinearSingular}
The multilinear singular integral operators including the multilinear Calder\'on-Zygmund operators and multilinear
singular integrals with nonsmooth kernels also satisfy the condition (\ref{AmalgamCondition}), so the results in Theorem \ref{OperatoronAmalgam}, \ref{multilinearCommutatoronAmalgam} and \ref{IteratedCommutatoronAmalgam} also hold for these operators and their commutators by some known weighted estimate from \cite{LOPTT2009, PPTT2014,CW2012,GLY2011,LWZ2014,AD2010,DGGLY2009,DGY2010} and so on.
\end{remark}




\section{Multilinear fractional type integral operators on amalgam type space \label{s4}}
\hskip\parindent

In this section, we will consider the fractional type integral operators and their commutators' behaviors on amalgam space and amalgam-Campanato space.
The following multilinear fractional integral has been studied by Kenig and Stein in \cite{KS1999,M2009}, in which the weak and strong estimates were given for a class of generalized fractional integral operators.

\begin{definition}\label{Fractional}
Let $0<\gamma<mn$, the multilinear fractional integral operator is defined by
$$
I_\gamma(\vec{f})(x):=\int_{(\mathbb{R}^n)^m}\frac{1}{|(x-y_1,\ldots,x-y_m)|^{mn-\gamma}}\prod_{j=1}^mf_j(y_j)d\vec{y},
$$
where $d\vec{y}=dy_1\cdots dy_m$ and $|(y_1,\ldots,y_m)|=\sum_{j=1}^m|y_j|$.
\end{definition}

The boundedness of multilinear commutator and iterated commutator of multilinear fractional integral above on weighted Lebesgue space were consider in \cite{M2009,CX2010,CW2013}, respectively. Chen and Xue \cite{CX2010} considered the multilinear fractional integral with homogeneous kernels as follows and obtained its boundedness on weighted Lebesgue space.

\begin{definition}\label{Fractional}
Assume each $\Omega_j(x)\in L^{\theta'}(S^{n-1})~(j=1,\ldots,m)$ for some $\theta'>1$ is
a homogeneous function with degree zero on $\mathbb{R}^n$. Then for any $x\in\mathbb{R}^n$
and $0<\alpha<mn$, define the multilinear fractional integral with
homogeneous kernels as
$$
I_{\Omega,\gamma}(\vec{f})(x):=\int_{(\mathbb{R}^n)^m}\frac{\prod_{j=1}^m\Omega_j(x-y_j)f_j(y_j)}{|(x-y_1,\ldots,x-y_m)|^{mn-\gamma}}d\vec{y},
$$
where $|(y_1,\ldots,y_m)|=\sum_{j=1}^m|y_j|$.
\end{definition}

The following classes of weights $A_{p,q}$ and $A_{\vec{P},q}$ are considered in this section.
\begin{definition}\label{Fractionalweight}\cite{MW1974}
Let $1\leq p<\infty$ and $q>0$. Suppose that $\omega$ is a nonnegative function on $\mathbb{R}^n$. We say that $\omega\in A_{p,q}$ if it satisfies
$$
\sup_B\bigg(\frac{1}{|B|}\int_B\omega^q(x)dx\bigg)^{1/q}\bigg(\frac{1}{|B|}\int_B\omega^{-p'}(x)dx\bigg)^{1/{p'}}<\infty.
$$
If $p=1$, $\bigg(\frac{1}{|B|}\int_B\omega^{-p'}(x)dx\bigg)^{1/{p'}}$ is understood as $(\inf_B\omega(x))^{-1}$.
\end{definition}
\begin{definition}\label{FractionalMultiplierweight}\cite{CX2010}
Let $1\leq p_1,\ldots,p_m<\infty$ with $1/p=\sum_{j=1}^m1/{p_j}$ , and $q>0$. Suppose that $\vec{\omega}=(\omega_1,\ldots,\omega_m)$ and
each $¦Ø_j~(i = 1,\ldots,m)$ is a nonnegative function on $\mathbb{R}^n$. We say that $\vec{\omega}\in A_{\vec{p},q}$ if it satisfies
$$
\sup_B\bigg(\frac{1}{|B|}\int_B\nu_{\vec{\omega}}^q(x)dx\bigg)^{1/q}\prod_{j=1}^m\bigg(\frac{1}{|B|}\int_B\omega_j^{-p_j'}(y_j)dy_j\bigg)^{1/{p_j'}}<\infty
$$
where $\nu_{\vec{\omega}}=\prod_{j=1}^m\omega_j$. If $p_j=1$, $\bigg(\frac{1}{|B|}\int_B\omega_j^{-p_j'}\bigg)^{1/{p_j'}}$ is understood as $(\inf_B\omega_j)^{-1}$.
\end{definition}

The following properties of the weight class $A_{\vec{P},q}$ will be used later.
\begin{lemma}\label{FractionalMultiplierweightandWeight}\cite{WY2013}
Let $m\geq 2$, $q_1,\ldots,q_m\in[1,\infty)$, $q\in(0,\infty)$ with $1/q=\sum_{j=1}^m1/{q_j}$, $\omega_1^{q_1},\ldots,\omega_m^{q_m}\in A_\infty$ and $\nu_{\vec{\omega}}=\prod_{j=1}^m\omega_j$, then
$$
\prod_{j=1}^m\bigg(\int_B\omega_j^{q_j}(x)dx\bigg)^{1/{q_j}}\leq C\bigg(\int_B\nu_{\vec{\omega}}^q(x)dx\bigg)^{1/q}.
$$
\end{lemma}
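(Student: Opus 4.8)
The plan is to prove what is the \emph{reverse} of the estimate H\"older's inequality gives for free, namely $\big(\int_B\nu_{\vec\omega}^q\big)^{1/q}\le\prod_{j}\big(\int_B\omega_j^{q_j}\big)^{1/q_j}$, by exploiting the exponential (Fujii--Wilson) characterization of the Muckenhoupt class $A_\infty$. Recall that a weight $v$ belongs to $A_\infty$ if and only if there is a constant $C_v>0$ such that for every ball $B$
\[
\frac{1}{|B|}\int_B v(x)\,dx\le C_v\exp\Big(\frac{1}{|B|}\int_B\log v(x)\,dx\Big);
\]
here $\log v$ is locally integrable since $v\in A_\infty\subset\bigcup_{p<\infty}A_p$. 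First I would apply this to each weight $v_j:=\omega_j^{q_j}$, which lies in $A_\infty$ by hypothesis, to obtain
\[
\Big(\frac{1}{|B|}\int_B\omega_j^{q_j}\Big)^{1/q_j}\le C\exp\Big(\frac{1}{|B|}\int_B\log\omega_j\Big),\qquad j=1,\ldots,m.
\]

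Next I would multiply these $m$ inequalities together. Because the exponents add inside the exponential and $\nu_{\vec\omega}=\prod_{j=1}^m\omega_j$, the product of the right-hand sides collapses to
\[
C\exp\Big(\frac{1}{|B|}\int_B\log\nu_{\vec\omega}\Big)=C\Big[\exp\Big(\frac{1}{|B|}\int_B\log\nu_{\vec\omega}^q\Big)\Big]^{1/q}.
\]
Then I would invoke Jensen's inequality for the concave function $\log$ to get $\exp\big(\frac{1}{|B|}\int_B\log\nu_{\vec\omega}^q\big)\le\frac{1}{|B|}\int_B\nu_{\vec\omega}^q$, which yields
\[
\prod_{j=1}^m\Big(\frac{1}{|B|}\int_B\omega_j^{q_j}\Big)^{1/q_j}\le C\Big(\frac{1}{|B|}\int_B\nu_{\vec\omega}^q\Big)^{1/q}.
\]
Finally, multiplying through by $|B|^{1/q}$ and using $1/q=\sum_{j=1}^m 1/q_j$ to cancel the normalizing powers of $|B|$ on the two sides gives exactly the asserted inequality, with $C$ depending only on $m$ and the constants $[\omega_j^{q_j}]_{A_\infty}$.

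The argument is soft, so I do not expect a serious obstacle; the only point demanding care is the bookkeeping forced by $1/q=\sum_j 1/q_j$, which is precisely what makes both the exponents inside the exponential and the powers of $|B|$ balance. If one wished to avoid the exponential characterization, an alternative would be to treat $m=2$ first and induct, repeatedly using that a product of two $A_\infty$ weights is again controlled by a reverse H\"older inequality; but the logarithm/Jensen route above handles all $m$ uniformly in a single pass and is cleaner.
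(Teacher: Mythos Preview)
Your argument is correct: the reverse Jensen characterization of $A_\infty$ applied to each $\omega_j^{q_j}$, multiplication, and then ordinary Jensen for $\nu_{\vec\omega}^q$ give exactly the claimed inequality, and the powers of $|B|$ cancel precisely because $1/q=\sum_j 1/q_j$. Note, however, that the paper does not supply its own proof of this lemma---it is quoted from \cite{WY2013}---so there is nothing in the present paper to compare your route against. For what it is worth, the argument in \cite{WY2013} (and the companion Lemma~\ref{MultiplierweightandWeight} here) proceeds along the same lines you outline: use the $A_\infty$ reverse Jensen/exponential inequality on each factor and then combine. Your write-up is a faithful and clean version of that standard proof.
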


\begin{lemma}\label{Fractionalweights}
Let $0 <\gamma< mn,$ $p_1,\ldots,p_m\in[1,\infty),$ $1/p=\sum_{j=1}^m1/p_j$ and $1/q = 1/p-\gamma/n.$ Then $\vec\omega=(\omega_1,\ldots,\omega_m)\in A_{\vec P,q}$ if and only if
$$
\begin{cases}
\nu_{\vec\omega}^q\in A_{mq},\\
\omega_j^{-p_j'}\in A_{mp_j'},\ j=1,\ldots,m,
\end{cases}
$$
where $\nu_{\vec\omega}=\prod_{j=1}^m\omega_j.$
\end{lemma}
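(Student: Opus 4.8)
The statement is the fractional analogue of Lemma \ref{Multipleweighteqvanlence}, so the plan is to mimic that characterization, carrying the extra exponent $q$ (linked to $p$ by $1/q=1/p-\gamma/n$, hence $q>p$) through the computation. The only analytic inputs are the scalar $A_r$ duality $w\in A_r\Leftrightarrow w^{1-r'}\in A_{r'}$, H\"older's inequality, and Jensen's inequality in the form $\frac{1}{|B|}\int_B\prod_j g_j^{-\beta_j}\ge\prod_j\big(\frac{1}{|B|}\int_B g_j\big)^{-\beta_j}$ for $\beta_j>0$, which holds because $(t_1,\dots,t_m)\mapsto\prod_j t_j^{-\beta_j}$ is convex on $(0,\infty)^m$. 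The inequality $q>p$ is exactly what will make the H\"older exponents one needs exist.

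\emph{Necessity.} Assume $\vec\omega\in A_{\vec P,q}$. First I would note that, since $p\le q$, the power-mean inequality gives $\big(\frac{1}{|B|}\int_B\nu_{\vec\omega}^p\big)^{1/p}\le\big(\frac{1}{|B|}\int_B\nu_{\vec\omega}^q\big)^{1/q}$ for every ball, so $\vec\omega$ satisfies the $A_{\vec P,p}$ condition; writing $u_j:=\omega_j^{p_j}$ one has $u_j^{1-p_j'}=\omega_j^{-p_j'}$ and $\nu_{\vec u}=\prod_j u_j^{p/p_j}=\nu_{\vec\omega}^p$, so this is precisely $\vec u\in A_{\vec P}$, and Lemma \ref{Multipleweighteqvanlence} yields $\omega_j^{-p_j'}\in A_{mp_j'}$ for all $j$. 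For the remaining condition, the $A_{\vec P,q}$ inequality gives $\frac{1}{|B|}\int_B\nu_{\vec\omega}^q\le C\prod_j\big(\frac{1}{|B|}\int_B\omega_j^{-p_j'}\big)^{-q/p_j'}$; on the other hand $\sum_j\frac{q}{(mq-1)p_j'}=\frac{q(m-1/p)}{mq-1}<1$ precisely because $q>p$, so one verifies there are exponents $a_j\ge1$ with $\sum_j 1/a_j=1$ and $\frac{qa_j}{(mq-1)p_j'}\le1$, and applying H\"older with these exponents to $\nu_{\vec\omega}^{-q/(mq-1)}=\prod_j\omega_j^{-q/(mq-1)}$ together with Jensen for the concave power $\frac{qa_j}{(mq-1)p_j'}$ gives $\frac{1}{|B|}\int_B\nu_{\vec\omega}^{-q/(mq-1)}\le\prod_j\big(\frac{1}{|B|}\int_B\omega_j^{-p_j'}\big)^{q/((mq-1)p_j')}$. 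Multiplying the two displays, the $\omega_j$-products cancel and what remains is the $A_{mq}$ condition for $\nu_{\vec\omega}^q$.

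\emph{Sufficiency.} Assume $\nu_{\vec\omega}^q\in A_{mq}$ and $\omega_j^{-p_j'}\in A_{mp_j'}$ for every $j$. I would write the $A_{mq}$ condition as $\big(\frac{1}{|B|}\int_B\nu_{\vec\omega}^q\big)^{1/q}\le C\big(\frac{1}{|B|}\int_B\nu_{\vec\omega}^{-q/(mq-1)}\big)^{-(mq-1)/q}$, observe $\nu_{\vec\omega}^{-q/(mq-1)}=\prod_j\big(\omega_j^{p_j'/(mp_j'-1)}\big)^{-\beta_j}$ with $\beta_j=\frac{q(mp_j'-1)}{p_j'(mq-1)}>0$, and apply Jensen to get $\frac{1}{|B|}\int_B\nu_{\vec\omega}^{-q/(mq-1)}\ge\prod_j\big(\frac{1}{|B|}\int_B\omega_j^{p_j'/(mp_j'-1)}\big)^{-\beta_j}$, hence $\big(\frac{1}{|B|}\int_B\nu_{\vec\omega}^q\big)^{1/q}\le C\prod_j\big(\frac{1}{|B|}\int_B\omega_j^{p_j'/(mp_j'-1)}\big)^{(mp_j'-1)/p_j'}$. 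Multiplying by $\prod_j\big(\frac{1}{|B|}\int_B\omega_j^{-p_j'}\big)^{1/p_j'}$ and invoking, factor by factor, the $A_{mp_j'}$ condition $\big(\frac{1}{|B|}\int_B\omega_j^{-p_j'}\big)\big(\frac{1}{|B|}\int_B\omega_j^{p_j'/(mp_j'-1)}\big)^{mp_j'-1}\le C$ produces exactly the $A_{\vec P,q}$ bound, uniformly in $B$.

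The expected difficulty is not any single deep step but the exponent bookkeeping, and in particular keeping track of where $q>p$ (equivalently $\gamma>0$) enters: it is exactly the slack that guarantees, in the necessity part, the existence of the H\"older exponents $a_j$, and it is the one feature distinguishing this from the non-fractional Lemma \ref{Multipleweighteqvanlence}. I would also dispose separately, with the usual conventions, of the endpoint cases $p_j=1$, where $\big(\frac{1}{|B|}\int_B\omega_j^{-p_j'}\big)^{1/p_j'}$ is read as $(\inf_B\omega_j)^{-1}$ and the corresponding scalar condition becomes an $A_1$ condition.
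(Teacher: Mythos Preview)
The paper does not actually prove this lemma: it is stated without proof and without an explicit citation, being a known characterization of the multiple fractional weights (due to Moen \cite{M2009}; see also \cite{CX2010}), entirely parallel to Lemma~\ref{Multipleweighteqvanlence} for $A_{\vec P}$. So there is no ``paper's own proof'' to compare against.

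Your argument is correct and is essentially the standard one. A couple of minor remarks. In the necessity step you reduce $\omega_j^{-p_j'}\in A_{mp_j'}$ to Lemma~\ref{Multipleweighteqvanlence} by passing from $A_{\vec P,q}$ to $A_{\vec P,p}$ via the power-mean inequality; this is clean and works, though one can also prove that part directly with the same H\"older/Jensen mechanism you use for $\nu_{\vec\omega}^q\in A_{mq}$, avoiding the appeal to the non-fractional lemma. In the H\"older step for $\nu_{\vec\omega}^q\in A_{mq}$, the existence of the exponents $a_j$ follows already from $q\ge p$ (the sum of lower bounds is $\le1$), so the argument goes through even at $\gamma=0$, where it collapses to Lemma~\ref{Multipleweighteqvanlence}; your emphasis on the strict inequality $q>p$ is harmless but not essential there. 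The sufficiency direction via multivariate Jensen for $(t_1,\dots,t_m)\mapsto\prod_j t_j^{-\beta_j}$ is exactly the standard trick and the exponent bookkeeping is right. The endpoint $p_j=1$ conventions you mention are the usual ones and cause no trouble.
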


Our main theorem for the multilinear fractional integrals is stated as follows.
\begin{theorem}\label{FractionalAmalgam}
Let $1\leq\theta< p_j\leq\alpha_j<s_j\leq\infty,$ $1/q_j=1/p_j-\gamma/(mn),$ $1/q=\sum_{j=1}^m1/{q_j}$, $1/s=\sum_{j=1}^m1/{s_j}$ and $1/{\alpha^*}=1/\alpha-\gamma/n:=\sum_{j=1}^m(1/{\alpha_j}-\gamma/{mn})$ with $\alpha/{\alpha_j}=p/{p_j}=s/s_j$. If $\vec{\omega}^{\theta}\in A_{\frac{\vec{P}}{\theta},\frac{q}{\theta}}$ with $\omega_1^{p_1},\ldots,\omega_m^{p_m}\in A_\infty,$ and $\mathcal{T}_{\gamma},$ which is bounded from $L^{p_1}(\omega_m^{p_1})\times\cdots L^{p_m}(\omega_m^{p_m})$ to $L^{q}(\nu_{\vec{\omega}}^{q})$, satisfies condition
\begin{align}\label{FractionalAmalgamcondition}
|\mathcal{T}_\gamma(\vec{f}^D)(x)|&\leq C\prod_{j\in\sigma^c}\bigg(\frac{1}{|2B|^{1-\gamma\theta/{mn}}}\int_{2B}|f_j(y_j)|^{\theta}dy_j\bigg)^{1/{\theta}}\\
&\hspace*{12pt}\times\prod_{j\in\sigma}\sum_{k=1}^\infty\frac{k}{2^{kn|\varrho|}}\bigg(\frac{1}{|2^{k+1}B|^{1-\gamma\theta/{mn}}}\int_{2^{k+1}B\backslash 2^kB}|f_j(y_j)|^{\theta} dy_j\bigg)^{1/{\theta}},  \nonumber
\end{align}
where $|\varrho|=|\sigma^c|(1-\gamma/{mn}).$ Then $\mathcal T_{\gamma}$ maps $(L^{p_1}(\omega_1^{p_1},\omega_1^{q_1}),L^{s_1})^{\alpha_1}\times\cdots\times(L^{p_m}(\omega_m^{p_m},\omega_m^{q_m}),L^{s_m})^{\alpha_m}$ to $(L^q(\nu_{\vec{\omega}}^q),L^s)^{\alpha^*}$.
\end{theorem}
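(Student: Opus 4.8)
The plan is to follow the proof of Theorem~\ref{OperatoronAmalgam} closely, with the role of the exponent $s$ there played by $\theta$, that of the domain exponents $q_j$ played by $p_j$, and that of the target exponent played by the present $q$ (so now $1/q=1/p-\gamma/n$), and with the multiple-weight tools $A_{\vec Q/s}$, Lemma~\ref{MultiplierweightandWeight} and Lemma~\ref{Multipleweighteqvanlence} replaced by their fractional counterparts $A_{\vec P/\theta,q/\theta}$, Lemma~\ref{FractionalMultiplierweightandWeight} and Lemma~\ref{Fractionalweights}. Fix a ball $B=B(y,r)$, write $f_j=f_j^0+f_j^\infty$ with $f_j^0=f_j\chi_{2B}$, and split, for a.e.\ $x\in B$,
\[ |\mathcal T_\gamma(\vec f)(x)|\le|\mathcal T_\gamma(\vec f^0)(x)|+\sum_{\vec D\in\mathcal I}|\mathcal T_\gamma(\vec f^{\vec D})(x)|. \]
The local term is controlled by the hypothesis $\mathcal T_\gamma\colon\prod_{j}L^{p_j}(\omega_j^{p_j})\to L^q(\nu_{\vec\omega}^q)$, which gives $\|\mathcal T_\gamma(\vec f^0)\chi_B\|_{L^q(\nu_{\vec\omega}^q)}\le C\prod_j\|f_j\chi_{2B}\|_{L^{p_j}(\omega_j^{p_j})}$.

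For the far terms I would insert the pointwise bound \eqref{FractionalAmalgamcondition}. In each average $\bigl(|Q|^{\gamma\theta/(mn)-1}\int_Q|f_j|^\theta\bigr)^{1/\theta}$ --- with $Q=2B$ for $j\in\sigma^c$, and, after enlarging the annulus $2^{k+1}B\setminus2^kB$ to $2^{k+1}B$, with $Q=2^{k+1}B$ for $j\in\sigma$ --- I apply H\"older's inequality with exponent $p_j/\theta>1$, pulling out the weight $\omega_j^\theta$; the identity $1/q_j=1/p_j-\gamma/(mn)$ makes the powers of $|Q|$ collapse to $|Q|^{-1/q_j}$, and then the $A_{\vec P/\theta,q/\theta}$ condition for $\vec\omega^\theta$ (whose associated weight at level $q/\theta$ is $\nu_{\vec\omega}^q$) collapses $\prod_j|2^{k+1}B|^{-1/q_j}\bigl(\tfrac1{|2^{k+1}B|}\int_{2^{k+1}B}(\omega_j^\theta)^{-(p_j/\theta)'}\bigr)^{1/\theta-1/p_j}$ into $C\,\nu_{\vec\omega}^q(2^{k+1}B)^{-1/q}$, exactly as in \eqref{Estimate21}. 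The powers of $2^k$ created by enlarging $2B$ and $2^{k+1}B\setminus2^kB$ to $2^{k+1}B$ are absorbed by the factor $2^{-kn|\varrho|}$, $|\varrho|=|\sigma^c|(1-\gamma/(mn))$, present in \eqref{FractionalAmalgamcondition}. The outcome is
\[ \sum_{\vec D\in\mathcal I}|\mathcal T_\gamma(\vec f^{\vec D})(x)|\le C\sum_{k\ge1}k\,\nu_{\vec\omega}^q(2^{k+1}B)^{-1/q}\prod_{j=1}^m\|f_j\chi_{2^{k+1}B}\|_{L^{p_j}(\omega_j^{p_j})}. \]

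The next ingredient is the fractional analogue of the inequality displayed just before \eqref{Estimate211}. Lemma~\ref{FractionalMultiplierweightandWeight}, applied to $\omega_1^{p_1},\dots,\omega_m^{p_m}\in A_\infty$ with exponents $p_j$, gives $\prod_j\omega_j^{p_j}(B)^{p/p_j}\le C\,\nu_{\vec\omega}^p(B)$; combining this with H\"older's inequality $\nu_{\vec\omega}^p(B)\le\nu_{\vec\omega}^q(B)^{p/q}|B|^{\gamma p/n}$ (which uses $1/p-1/q=\gamma/n$) and with the scaling relations $\alpha/\alpha_j=p/p_j=s/s_j$ and $1/\alpha^*=1/\alpha-\gamma/n$, one expresses $\nu_{\vec\omega}^q(B)^{1/\alpha^*-1/q-1/s}$ through $\prod_j\bigl(\omega_j^{p_j}(B)\bigr)^{1/\alpha_j-1/p_j-1/s_j}$ up to a power of $|B|$, which is exactly the power already carried by \eqref{FractionalAmalgamcondition}. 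Multiplying the previous display by $\nu_{\vec\omega}^q(B)^{1/\alpha^*-1/q-1/s}$, using this at scale $2^{k+1}B$ together with Lemma~\ref{doublemeasure}, and estimating $\nu_{\vec\omega}^q(B)/\nu_{\vec\omega}^q(2^{k+1}B)\le C\,2^{-\varepsilon kn}$ by Lemma~\ref{ReHolder} (legitimate since $\nu_{\vec\omega}^q\in A_\infty$ by Lemma~\ref{Fractionalweights}), one obtains a convergent series and
\[ \nu_{\vec\omega}^q(B)^{\frac1{\alpha^*}-\frac1q-\frac1s}\|\mathcal T_\gamma(\vec f)\chi_B\|_{L^q(\nu_{\vec\omega}^q)}\le C\sum_{k\ge0}k\,2^{-\delta k}\prod_{j=1}^m\omega_j^{p_j}(2^{k+1}B)^{\frac1{\alpha_j}-\frac1{p_j}-\frac1{s_j}}\|f_j\chi_{2^{k+1}B}\|_{L^{p_j}(\omega_j^{p_j})} \]
for some $\delta>0$. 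Finally, taking the $L^s(dy)$ norm, applying H\"older's inequality with the exponents $s_j/s$ (admissible since $\sum_js/s_j=1$) to separate the product, bounding each factor by $\|f_j\|_{(L^{p_j}(\omega_j^{p_j},\omega_j^{q_j}),L^{s_j})^{\alpha_j}}$ --- which requires converting the local $L^{p_j}(\omega_j^{p_j})$ norm into the local $L^{p_j}(\omega_j^{q_j})$ norm carried by the domain space, via H\"older's inequality on each $2^{k+1}B$ and the reverse-H\"older/$A_\infty$ properties of the weights guaranteed by Lemma~\ref{Fractionalweights} --- then summing in $k$ and taking the supremum over $r>0$, gives the asserted estimate.

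The step I expect to be the main obstacle is the fractional ``important inequality'' above and, more broadly, the whole bookkeeping of exponents: unlike in Theorem~\ref{OperatoronAmalgam}, the ratios $q/q_j$ are no longer constant, so the exponents cannot simply be read off, and one must carry the $|B|^{\gamma/n}$-type factors generated by \eqref{FractionalAmalgamcondition} and by the H\"older passages through every step and check that, together with the weight measures $\omega_j^{p_j}(B)$ and $\nu_{\vec\omega}^q(B)$, they combine to exactly the powers $1/\alpha_j-1/p_j-1/s_j$ and $1/\alpha^*-1/q-1/s$ dictated by the definition of the amalgam spaces. A closely related delicate point is the reconciliation of the weight $\omega_j^{p_j}$ that \eqref{FractionalAmalgamcondition} and the Lebesgue hypothesis force on the local norm with the weight $\omega_j^{q_j}$ sitting in the local slot of the domain space; this is what makes the two-weight amalgam space $(L^{p_j}(\omega_j^{p_j},\omega_j^{q_j}),L^{s_j})^{\alpha_j}$ the natural domain. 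Once these points are settled, the convergence of the $k$-series (from $2^{-kn|\varrho|}$ when $\sigma^c\ne\emptyset$ and from Lemma~\ref{ReHolder} when $\sigma^c=\emptyset$) and the H\"older splitting in $y$ are routine, precisely as in the proof of Theorem~\ref{OperatoronAmalgam}.
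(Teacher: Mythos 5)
Your overall architecture is the paper's: the same local/far splitting, the local term handled by the hypothesis $\mathcal T_\gamma\colon\prod_jL^{p_j}(\omega_j^{p_j})\to L^q(\nu_{\vec\omega}^q)$, the far terms handled by inserting \eqref{FractionalAmalgamcondition}, applying H\"older with exponent $p_j/\theta$ and collapsing the weight averages via the $A_{\vec P/\theta,q/\theta}$ condition into $\nu_{\vec\omega}^q(2^{k+1}B)^{-1/q}\prod_j\|f_j\chi_{2^{k+1}B}\|_{L^{p_j}(\omega_j^{p_j})}$, then Lemmas \ref{Fractionalweights}, \ref{ReHolder}, \ref{doublemeasure} for the decay of $\nu^q_{\vec\omega}(B)/\nu^q_{\vec\omega}(2^{k+1}B)$ and an $L^s$-H\"older step in $y$. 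Up to that point you are reproducing the paper's proof.

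The divergence — and the place where your plan, as written, would break — is exactly the ``delicate point'' you defer. First, you build the fractional ``important inequality'' by applying Lemma \ref{FractionalMultiplierweightandWeight} at the exponents $p_j$ and then trading $\nu_{\vec\omega}^p(B)$ for $\nu_{\vec\omega}^q(B)^{p/q}|B|^{\gamma p/n}$, which leaves you with $\omega_j^{p_j}(B)$ ball factors and stray powers of $|B|$ to reconcile; the paper instead applies that lemma at the exponents $q_j$, obtaining directly $\nu_{\vec\omega}^q(B)^{1/\alpha^*-1/q-1/s}=\nu_{\vec\omega}^q(B)^{1/\alpha-1/p-1/s}\le C\prod_j\omega_j^{q_j}(B)^{1/\alpha_j-1/p_j-1/s_j}$, with no $|B|^{\gamma/n}$ bookkeeping at all. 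Second, and more seriously, your plan to convert the local norms $\|f_j\chi_{2^{k+1}B}\|_{L^{p_j}(\omega_j^{p_j})}$ into $\|f_j\chi_{2^{k+1}B}\|_{L^{p_j}(\omega_j^{q_j})}$ ``via H\"older and reverse-H\"older'' is a step that fails: $\omega_j^{p_j}$ and $\omega_j^{q_j}$ are non-comparable powers of the same weight (recall $q_j>p_j$), and any H\"older manoeuvre either changes the Lebesgue exponent or produces unweighted factors you cannot absorb. The paper never performs such a conversion: in its proof the weight $\omega_j^{p_j}$ stays in the local slot throughout (it is what the Lebesgue hypothesis and the $A_{\vec P/\theta,q/\theta}$ collapse naturally produce), while $\omega_j^{q_j}$ enters only through the ball factor $\omega_j^{q_j}(2^{k+1}B)^{1/\alpha_j-1/p_j-1/s_j}$ coming from the important inequality; the pairing of these two is precisely what the two-weight amalgam norm $(L^{p_j}(\omega_j^{p_j},\omega_j^{q_j}),L^{s_j})^{\alpha_j}$ is designed to record, so no passage between the two weights is needed. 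Replacing your two reductions by the single application of Lemma \ref{FractionalMultiplierweightandWeight} at the exponents $q_j$, and reading the domain norm as the paper's proof does, turns your outline into the paper's argument.
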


\begin{proof}
Let $x\in B(y,r)$, in view of \eqref{FractionalAmalgamcondition}, we have
\begin{align*}
|\mathcal{T}_\gamma(\vec{f})(x)|
&\leq |\mathcal{T}_\gamma(\vec{f}^0)(x)|+\sum_{(d_1,\ldots,d_m)\in\mathcal{I}}|\mathcal{T}_\gamma(f_1^{d_1},\ldots,f_m^{d_m})(x)|\\
&\leq |\mathcal{T}_\gamma(\vec{f}^0)(x)|\\
&\hspace*{12pt}+\sum_{(d_1,\ldots,d_m)\in\mathcal{I}}\prod_{j\in\sigma^c}\bigg(\frac{1}{|2B|^{1-\gamma/{mn}}}\int_{2B}|f_j(y_j)| dy_j\bigg)\\
&\hspace*{12pt}\times\prod_{j\in\sigma}\sum_{k=1}^\infty\frac{1}{2^{kn\varrho}}\bigg(\frac{1}{|2^{k+1}B|^{1-\gamma/{mn}}}\int_{2^{k+1}B\backslash 2^kB}|f_j(y_j)| dy_j\bigg)\\
&:=F_1(x)+F_2(x).
\end{align*}
By Lemma \ref{FractionalMultiplierweightandWeight} and the condition $\alpha/{\alpha_j}=p/p_j=s/{s_j}$, we have an important inequality as follows
\begin{align*}
\nu_{\vec{\omega}}^q(B)^{\frac{1}{\alpha^*}-\frac{1}{q}-\frac{1}{s}}=\nu_{\vec{\omega}}^q(B)^{\frac{1}{\alpha}-\frac{1}{p}-\frac{1}{s}}
\leq C\prod_{j=1}^m\omega_j^{q_j}(B)^{\frac{1}{\alpha_j}-\frac{1}{p_j}-\frac{1}{s_j}}.
\end{align*}

Since $T_\gamma$ is a bounded operators from $L^{p_1}(\omega_1^{p_1})\times\cdots L^{p_m}(\omega_m^{p_m})$ to $L^q(\nu_{\vec{\omega}}^q)$, we
have
\begin{align*}
\big(\nu^q_{\vec{\omega}}(B)\big)^{\frac{1}{\alpha*}-\frac{1}{q}-\frac{1}{s}}\|F_1\chi_B\|_{L^q(\nu_{\vec{\omega}}^q)}&\leq C\big(\nu^q_{\vec{\omega}}(B)\big)^{\frac{1}{\alpha*}-\frac{1}{q}-\frac{1}{s}}\prod_{j=1}^m\|f_j\chi_{2B}\|_{L^{p_j}(\omega_j^{p_j})}\\
&\leq C\prod_{j=1}^m(\omega_j^{q_j}(2B))^{\frac{1}{\alpha_j}-\frac{1}{p_j}-\frac{1}{s_j}}\|f_j\chi_{2B}\|_{L^{p_j}(\omega_j^{p_j})},
\end{align*}
therefore
$$
\|F_1\|_{(L^q(\nu_{\vec{\omega}}),L^s)^{\alpha*}}\leq C\prod_{j=1}^m\|f_j\|_{(L^{p_j}(\omega_j^{p_j},\omega_j^{q_j}),L^{s})^{\alpha_j}}.
$$

To estimate the second part $F_2(x)$, an applying of H\"older's inequality and $A_{\vec P,q}$ condition tell us that
\begin{align*}
F_2(x)&\leq C\sum_{(d_1,\ldots,d_m)\in\mathcal I}\prod_{j\in\sigma^c}\|f_j\chi_{2B}\|_{L^{p_j}(\omega_j^{p_j})}|2B|^{-1+\frac\gamma{mn}}\bigg(\int_{2B}\omega_j^{-p_j'}(y_j)dy_j\bigg)^\frac1{p_j'}\\
&\hspace*{12pt}\times\prod_{j\in\sigma}\sum_{k=1}^\infty\frac{k}{2^{kn|\varrho|}}
\|f_j\chi_{2^{k+1}B}\|_{L^{p_j}(\omega_j^{p_j})}|2^kB|^{-1+\frac\gamma{mn}}\bigg(\int_{2^kB}\omega_j^{-p_j'}(y_j)dy_j\bigg)^\frac1{p_j'}\\
&\leq C\sum_{k=1}^\infty k(\nu^q_{\vec\omega}(2^{k+1}B))^{-\frac1q}\prod_{j=1}^m\|f_j\chi_{2^{k+1}B}\|_{L^{p_j}(\omega_j^{p_j})}.
\end{align*}
Taking $L^q(\nu_{\vec{\omega}})$-norm on the ball $B$ of both side inequality and then multiplying both side by $\big(\nu^q_{\vec{\omega}}(B)\big)^{\frac{1}{\alpha}-\frac{1}{q}-\frac{1}{s}}$, we obtain
\begin{align*}
\big(\nu^q_{\vec{\omega}}(B)\big)^{\frac{1}{\alpha}-\frac{1}{q}-\frac{1}{s}}&\|F_2\chi_B\|_{L^q(\nu_{\vec{\omega}})}\\
&\leq C\sum_{k=1}^\infty k\bigg(\frac{\nu^q_{\vec{\omega}}(B)}{\nu^q_{\vec{\omega}}(2^{k+1}B)}\bigg)^{\frac{1}{\alpha}-\frac{1}{s}}\prod_{j=1}^m\big(\omega_j^{p_j}(2^{k+1}B)\big)^{\frac1\alpha-\frac1{q_j}-\frac1{s_j}}\|f_j\chi_{2^{k+1}B}\|_{L^{p_j}(\omega_j^{p_j})}\\
&\leq C\prod_{j=1}^m\|f_j\|_{(L^{p_j}(\omega_j^{p_j},\omega_j^{q_j}),L^{s})^{\alpha_j}},
\end{align*}
where we use Lemma \ref{FractionalMultiplierweightandWeight} and \ref{Fractionalweights}. Therefore, we complete the proof of this theorem.
\end{proof}
\begin{remark}
We should point out that it is clearly that the fractional type integral operators $I_{\gamma}$ and $I_{\Omega, \gamma}$ above satisfy our condition \eqref{FractionalAmalgamcondition}.
\end{remark}

We will give our results for the multilinear commutators and iterated commutators generated by the fractional multilinear operators and function vector $\vec b\in BMO^m$ without the proof. In fact, the process is very similar to Theorem \ref{multilinearCommutatoronAmalgam} and \ref{IteratedCommutatoronAmalgam} using some properties of $A_{\vec P,q}.$

\begin{theorem}\label{MultilinearCommutatorFractionalAmalgam}
Let $1\leq\theta< p_j\leq\alpha_j<s_j\leq\infty,$ $1/q_j=1/p_j-\gamma/(mn),$ $1/q=\sum_{j=1}^m1/{q_j}$, $1/s=\sum_{j=1}^m1/{s_j}$ and $1/{\alpha^*}=1/\alpha-\gamma/n:=\sum_{j=1}^m(1/{\alpha_j}-\gamma/{mn})$ with $\alpha/{\alpha_j}=p/{p_j}=s/s_j$. If $\vec{\omega}^{\theta}\in A_{\frac{\vec{P}}{\theta},\frac{q}{\theta}}$ with $\omega_1^{p_1},\ldots,\omega_m^{p_m}\in A_\infty$, and $\mathcal{T}_{\gamma}$ satisfying \eqref{FractionalAmalgamcondition} is bounded from $L^{p_1}(\omega_m^{p_1})\times\cdots L^{p_m}(\omega_m^{p_m})$ to $L^{q}(\nu_{\vec{\omega}}^{q})$,
then multilinear commutator $\mathcal T_{\gamma,\Sigma\vec{b}}$ and $\mathcal T_{\gamma,\Pi\vec{b}}$  is bounded from $(L^{p_1},L^{s_1})^{\alpha_1}\times\cdots\times(L^{p_m},L^{s_m})^{\alpha_m}$ to $(L^q,L^s)^{\alpha^*}$.
\end{theorem}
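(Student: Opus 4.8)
The plan is to run the arguments of Theorems~\ref{multilinearCommutatoronAmalgam} and~\ref{IteratedCommutatoronAmalgam} while carrying along the fractional scaling already used for Theorem~\ref{FractionalAmalgam}; throughout, $(L^{p_j},L^{s_j})^{\alpha_j}$ and $(L^q,L^s)^{\alpha^*}$ denote the weighted amalgam spaces $(L^{p_j}(\omega_j^{p_j},\omega_j^{q_j}),L^{s_j})^{\alpha_j}$ and $(L^q(\nu_{\vec\omega}^q),L^s)^{\alpha^*}$ of Theorem~\ref{FractionalAmalgam}. Since $\mathcal{T}_{\gamma,\Sigma\vec b}=\sum_{j=1}^m\mathcal{T}_{\gamma,b_j}$, it suffices to bound one $\mathcal{T}_{\gamma,b_j}$. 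Fixing $B=B(y,r)$ and splitting $f_i=f_i^0+f_i^\infty$, write
$$|\mathcal{T}_{\gamma,b_j}(\vec f)(x)|\le|\mathcal{T}_{\gamma,b_j}(\vec f^{0})(x)|+\sum_{\vec D\in\mathcal{I}}\Big(|b_j(x)-(b_j)_B|\,|\mathcal{T}_\gamma(\vec f^{\vec D})(x)|+|\mathcal{T}_\gamma(((b_j)_B-b_j)\vec f^{\vec D})(x)|\Big)=:J_1(x)+J_2(x)+J_3(x).$$
The first term is handled by the weighted Lebesgue boundedness of $\mathcal{T}_{\gamma,b_j}$ from $L^{p_1}(\omega_1^{p_1})\times\cdots\times L^{p_m}(\omega_m^{p_m})$ to $L^q(\nu_{\vec\omega}^q)$ --- known for $I_\gamma$ and $I_{\Omega,\gamma}$ by \cite{M2009,CX2010,CW2013} --- exactly as for \eqref{Estimate22}. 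For $J_2$, insert \eqref{FractionalAmalgamcondition} for $\mathcal{T}_\gamma(\vec f^{\vec D})$; since the resulting bound is constant on $B$, $\|J_2\chi_B\|_{L^q(\nu_{\vec\omega}^q)}$ factors as $\|(b_j-(b_j)_B)\chi_B\|_{L^q(\nu_{\vec\omega}^q)}$ times the $k$-sum, and, as $\nu_{\vec\omega}^q\in A_\infty$ by Lemma~\ref{Fractionalweights} applied to $\vec\omega^{\theta}$, Lemma~\ref{BMOandBMOw} gives $\|(b_j-(b_j)_B)\chi_B\|_{L^q(\nu_{\vec\omega}^q)}\le C\|b_j\|_{BMO}\,\nu_{\vec\omega}^q(B)^{1/q}$, reproducing the analogue of \eqref{Estimate23}.

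For $J_3$ the key step is the fractional analogue of \eqref{Estimate25}: writing $Q=2^{k+1}B$ and $\tau_j=(p_j/\theta)'$,
$$\Big(\frac{1}{|Q|^{1-\gamma\theta/(mn)}}\int_{Q}|f_j(z)(b_j(z)-(b_j)_B)|^{\theta}\,dz\Big)^{1/\theta}\le Ck\|b_j\|_{BMO}\,|Q|^{\gamma/(mn)-1/\theta}\,\|f_j\chi_{Q}\|_{L^{p_j}(\omega_j^{p_j})}\Big(\int_{Q}\omega_j^{-\theta\tau_j}\Big)^{1/(\theta\tau_j)},$$
which comes from Hölder with exponent $p_j/\theta>1$ (allowed since $\theta<p_j$), the telescoping inequality $|(b_j)_Q-(b_j)_B|\le Ck\|b_j\|_{BMO}$, and the comparability $\|b_j\|_{BMO(\omega_j^{-\theta\tau_j})}\approx\|b_j\|_{BMO}$ of Lemma~\ref{BMOandBMOw} --- valid because $\omega_j^{-\theta\tau_j}=(\omega_j^{\theta})^{-(p_j/\theta)'}\in A_\infty$, again by Lemma~\ref{Fractionalweights} for $\vec\omega^{\theta}$. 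Substituting this into \eqref{FractionalAmalgamcondition} with $((b_j)_B-b_j)f_j$ in the $j$th slot, using the $A_{\vec P/\theta,q/\theta}$ condition on $\vec\omega^{\theta}$ to recombine the $\omega_j^{-\theta\tau_j}$-integrals, and taking $L^q(\nu_{\vec\omega}^q)$-norms over $B$ yields
$$\|J_3\chi_B\|_{L^q(\nu_{\vec\omega}^q)}\le C\|b_j\|_{BMO}\sum_{k=1}^\infty k^2\Big(\frac{\nu_{\vec\omega}^q(B)}{\nu_{\vec\omega}^q(2^{k+1}B)}\Big)^{1/q}\prod_{i=1}^m\|f_i\chi_{2^{k+1}B}\|_{L^{p_i}(\omega_i^{p_i})}.$$

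Collecting $J_1,J_2,J_3$, multiplying by $\nu_{\vec\omega}^q(B)^{1/\alpha^*-1/q-1/s}$, using the key weight inequality $\nu_{\vec\omega}^q(B)^{1/\alpha^*-1/q-1/s}\le C\prod_j\omega_j^{q_j}(B)^{1/\alpha_j-1/p_j-1/s_j}$ from the proof of Theorem~\ref{FractionalAmalgam}, and then Lemmas~\ref{ReHolder}, \ref{doublemeasure} and~\ref{Fractionalweights} to turn $(\nu_{\vec\omega}^q(B)/\nu_{\vec\omega}^q(2^{k+1}B))^{1/\alpha^*-1/s}$ into a factor decaying geometrically in $k$ that absorbs $k^2$, one takes $L^s$-norms in $y$, applies Hölder across $j$, and passes to the supremum over $r>0$ --- precisely as at the end of the proof of Theorem~\ref{FractionalAmalgam} --- obtaining the bound for $\mathcal{T}_{\gamma,b_j}$ and hence for $\mathcal{T}_{\gamma,\Sigma\vec b}$. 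For $\mathcal{T}_{\gamma,\Pi\vec b}$ one takes $m=2$ and uses the five-term splitting $II_1,\ldots,II_5$ of the proof of Theorem~\ref{IteratedCommutatoronAmalgam}: $II_1$ by the weighted Lebesgue boundedness of $\mathcal{T}_{\gamma,\Pi\vec b}$; the diagonal term $II_2$ by Hölder across the indices (exponents $q_i/q$) together with Lemma~\ref{FractionalMultiplierweightandWeight} and the generalized-BMO estimate of Lemma~\ref{BMOandBMOw}; $II_5$ by applying the $J_3$-type estimate in both slots (which produces $k^3$, still absorbed by the geometric decay); and $II_3,II_4$ by mixing the two devices. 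The remaining bookkeeping is identical.

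The main obstacle is tracking the two competing scalings --- the $\theta$-power local averages normalised by $|Q|^{1-\gamma\theta/(mn)}$ versus the $q$-exponents in the target norm --- and checking that, once the BMO manipulations have produced the extra powers $k^2$ (resp.\ $k^3$), the reverse-doubling estimate of Lemma~\ref{ReHolder} for $\nu_{\vec\omega}^q$ (available since $\nu_{\vec\omega}^q\in A_{mq/\theta}$ by Lemma~\ref{Fractionalweights}) still supplies enough geometric decay to sum the series, and that all weights invoked (in particular $\omega_j^{-\theta(p_j/\theta)'}$, $\nu_{\vec\omega}^q$ and the $\omega_j^{q_j}$) lie in the relevant Muckenhoupt classes. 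Once the exponent relations $1/q_j=1/p_j-\gamma/(mn)$, $1/\alpha^*=1/\alpha-\gamma/n$ and $\alpha/\alpha_j=p/p_j=s/s_j$ are handled with care, no idea beyond those of Theorems~\ref{FractionalAmalgam}, \ref{multilinearCommutatoronAmalgam} and~\ref{IteratedCommutatoronAmalgam} is required.
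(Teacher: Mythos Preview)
Your proposal is correct and follows exactly the route the paper indicates: the paper does not give a detailed proof of this theorem but simply states that ``the process is very similar to Theorem~\ref{multilinearCommutatoronAmalgam} and~\ref{IteratedCommutatoronAmalgam} using some properties of $A_{\vec P,q}$,'' and your outline carries this out, splicing the fractional bookkeeping of Theorem~\ref{FractionalAmalgam} into the commutator decompositions of Theorems~\ref{multilinearCommutatoronAmalgam} and~\ref{IteratedCommutatoronAmalgam}. One small point worth flagging is that the theorem as stated only hypothesizes the Lebesgue boundedness of $\mathcal{T}_\gamma$ itself (not of its commutators), whereas your treatment of $J_1$ and $II_1$ needs the weighted Lebesgue boundedness of $\mathcal{T}_{\gamma,b_j}$ and $\mathcal{T}_{\gamma,\Pi\vec b}$; you correctly supply this from \cite{M2009,CX2010,CW2013} for the concrete operators $I_\gamma$ and $I_{\Omega,\gamma}$, which is consistent with how the paper uses the result.
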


Next we will give an endpoint estimate for the multilinear fractional type operator without weights. To do this, we introduce the amalgam-Campanato space at first.
\begin{definition}\label{AmalgamCamponato}
Let $n<\beta<\infty$ and $0< q\leq \alpha\leq p\leq\infty$. We define the Amalgam-Campanato space $(C^q,L^p)^{\alpha,\beta}:=(C^q,L^p)^{\alpha,\beta}(\mathbb R^n)$ as the space of all measurable functions $f$ satisfying $\|f\|_{(C^q,L^p)^{\alpha,\beta}}<\infty$, where
$$
\|f\|_{(C^q,L^p)^{\alpha,\beta}}:=\underset{r>0}{\sup}\leftidx{_r}\|f\|_{(C^q,L^p)^{\alpha,\beta}}
$$
with
\begin{align*}
\leftidx{_r}\|f\|_{(C^q,L^p)^{\alpha,\beta}}:=
\begin{cases}\displaystyle
\bigg(\int_{\mathbb{R}^n}|B(y,r)|^{\frac{1}{\alpha}-\frac{1}{\beta}-\frac{1}{q}-\frac{1}{p}}\|(f-f_{B(y,r)})\chi_{B(y,r)}\|_{L^q}\big)^pdy\bigg)^{\frac{1}{p}},\quad &p<\infty,\\
\mathop{\rm{ess}\sup}\limits_{y\in\mathbb{R}^n}|B(y,r)|^{\frac{1}{\alpha}-\frac{1}{\beta}-\frac{1}{q}}\|(f-f_{B(y,r)})\chi_{B(y,r)}\|_{L^q}, \quad &p=\infty.
\end{cases}
\end{align*}
\end{definition}

\begin{remark}
It is easy to see that the space goes back to the classical Campanato space $C^{q,\frac{1}{\beta}-\frac{1}{\alpha}}$ when $p=\infty.$
\end{remark}

Some results established in \cite{T2008} can also be viewed as consequences of the following theorem in the case $p=\infty.$
\begin{theorem}\label{FractionalAmalgam-Campanato}
Let $m\in\mathbb N_+,$ $0<\gamma<mn,$ $\beta_j>mn,$ $1\leq\alpha_j<\beta_j,$ $1/(\alpha_j,\beta_j)=1/\alpha_j-1/\beta_j$, $1\leq q_j\leq\alpha_j<p_j\leq\infty,$ $1/\beta=1/\beta_1+\cdots+1/\beta_m,$ $1/{\alpha^*}=1/{\alpha}-\gamma/n=\sum_{j=1}^m(1/{\alpha_j}-\gamma/{mn})$, $1/q=\sum_{j=1}^m1/{q_j}$ and $1/p=\sum_{j=1}^m1/{p_j}$. If $(\alpha_j,\beta_j)\vee\alpha_j*<p_j,j=1,2,\cdots,m,$ then $I_{\gamma}$ is bounded from $(L^{q_1},L^{p_1})^{(\alpha_1,\beta_1)}\times\cdots\times(L^{q_m},L^{p_m})^{(\alpha_m,\beta_m)}$ to $(C^q,L^p)^{\alpha*,\beta}$ for $q=1$.
\end{theorem}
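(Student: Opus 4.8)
The plan is to follow the now-familiar dyadic-annulus decomposition, but with two crucial modifications forced by the Campanato target space: we must subtract the mean value $\left(I_\gamma(\vec f)\right)_{B}$ over each ball $B=B(y,r)$ before taking the local $L^q$-norm, and we must estimate everything without weights. Fix a ball $B=B(y,r)$ and split each $f_j=f_j^0+f_j^\infty$ with $f_j^0=f_j\chi_{2B}$. First I would isolate the purely local piece $I_\gamma(\vec f^{\,0})=I_\gamma(f_1^0,\dots,f_m^0)$. For this term one does not subtract the mean; instead one uses the known weak-type endpoint bound for $I_\gamma$ on Lebesgue spaces at the exponent $q=1$ together with Hölder's inequality on $2B$ to pass from $\|f_j^0\|_{L^{q_j}}$ up to $\|f_j\chi_{2B}\|_{L^{q_j}}$, picking up the correct power of $|2B|$ from the scaling $1/\alpha^*=1/\alpha-\gamma/n$ and the relation $1/q_j=1/p_j-\gamma/(mn)$ hidden in the hypothesis; the condition $(\alpha_j,\beta_j)\vee\alpha_j^*<p_j$ is exactly what guarantees the resulting exponents on $|B(y,r)|$ are summable after integrating in $y$ against $L^p$.

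For the tail pieces $I_\gamma(\vec f^{\,\vec D})$ with $\vec D\in\mathcal I$, the key observation is that for $x,x'\in B$ and $y_j$ in a far annulus $2^{k+1}B\setminus 2^kB$ with $j\in\sigma$, the kernel difference satisfies
\begin{align*}
\left|\frac{1}{|(x-y_1,\dots,x-y_m)|^{mn-\gamma}}-\frac{1}{|(x'-y_1,\dots,x'-y_m)|^{mn-\gamma}}\right|\leq C\,\frac{r}{(2^k r)^{mn-\gamma+1}},
\end{align*}
which is what lets the mean subtraction $I_\gamma(\vec f^{\,\vec D})(x)-\left(I_\gamma(\vec f^{\,\vec D})\right)_{B}$ produce an extra factor $2^{-k}$ beyond the geometric gain $2^{-kn|\sigma^c|}$ already present in a condition of the type \eqref{FractionalAmalgamcondition}. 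Writing $I_\gamma(\vec f^{\,\vec D})(x)-\left(I_\gamma(\vec f^{\,\vec D})\right)_{B}=\frac{1}{|B|}\int_B\left(I_\gamma(\vec f^{\,\vec D})(x)-I_\gamma(\vec f^{\,\vec D})(x')\right)dx'$ and inserting the kernel-difference bound, I would sum in $k$ and apply Hölder's inequality in each annulus (exponent $q_j$ in the numerator, the conjugate exponent contributing the volume factor) to arrive at a pointwise bound of the shape
\begin{align*}
\left|I_\gamma(\vec f^{\,\vec D})(x)-\left(I_\gamma(\vec f^{\,\vec D})\right)_{B}\right|\leq C\prod_{j\in\sigma^c}\frac{\|f_j\chi_{2B}\|_{L^{q_j}}}{|2B|^{1-1/q_j-\gamma/(mn)}}\sum_{k=1}^\infty\frac{2^{-k}}{2^{kn|\sigma^c|}}\prod_{j\in\sigma}\frac{\|f_j\chi_{2^{k+1}B}\|_{L^{q_j}}}{|2^{k+1}B|^{1-1/q_j-\gamma/(mn)}}.
\end{align*}
Multiplying by $|B(y,r)|^{1/\alpha^*-1/\beta-1/q-1/p}$, using $\sum_{j=1}^m(1/\alpha_j-\gamma/(mn))=1/\alpha^*$ and $1/\beta=\sum 1/\beta_j$ to distribute the volume exponents as $\prod_j|B|^{1/\alpha_j-1/\beta_j-1/q_j-1/p_j}$ across the factors, then taking the $L^p(dy)$ norm and applying Hölder in $y$ with exponents $p_j$, each factor becomes $\leftidx{_{2^{k+1}r}}\|f_j\|_{(L^{q_j},L^{p_j})^{(\alpha_j,\beta_j)}}\leq\|f_j\|_{(L^{q_j},L^{p_j})^{(\alpha_j,\beta_j)}}$, and the residual $k$-series $\sum_k k^{?}\,2^{-k}2^{-kn|\sigma^c|(\text{something})}$ converges because of the newly gained $2^{-k}$; finally take the supremum over $r>0$.

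The main obstacle I anticipate is bookkeeping the exponents on $|B(y,r)|$ so that, after distributing them via Lemma-type volume identities and after the Hölder step in $y$, each of the $m$ resulting integrals is exactly the $(\alpha_j,\beta_j)$-amalgam norm of $f_j$ at radius $2^{k+1}r$ and nothing is left over except a summable-in-$k$ tail; this is where the precise hypotheses $\beta_j>mn$, $1/(\alpha_j,\beta_j)=1/\alpha_j-1/\beta_j$, and especially $(\alpha_j,\beta_j)\vee\alpha_j^*<p_j$ must all be used, the last one to keep the power of $2^{k}$ (coming from the volume factors $\omega$-free analogue) strictly below the gain $2^{-k-kn|\sigma^c|}$. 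A secondary technical point is justifying the mean-value manipulation and the convergence of $I_\gamma(\vec f^{\,\vec D})$ as an absolutely convergent integral for $x\in B$, which follows from the size condition on the kernel together with the membership $f_j\in(L^{q_j},L^{p_j})^{(\alpha_j,\beta_j)}$ exactly as in the proof of Theorem \ref{FractionalAmalgam}. The local piece, by contrast, is routine once the endpoint weak-type bound for $I_\gamma$ is invoked.
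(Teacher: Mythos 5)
Your handling of the oscillation (tail) terms is essentially the paper's own argument: the same splitting $f_j=f_j^0+f_j^\infty$, the same representation of $I_\gamma(\vec f^{\,\vec D})(x)-\big(I_\gamma(\vec f^{\,\vec D})\big)_B$ as an average of differences $I_\gamma(\vec f^{\,\vec D})(x)-I_\gamma(\vec f^{\,\vec D})(x')$, the same kernel-difference bound by $C|x-x'|\,|(x-z_1,\ldots,x-z_m)|^{-(mn+1-\gamma)}$ giving the extra decay in $k$, and the same redistribution of the powers of $|B(y,r)|$ (using only $1/\alpha^*=1/\alpha-\gamma/n$ and $1/\beta=\sum_j 1/\beta_j$) so that after H\"older in $y$ each factor is the $(\alpha_j,\beta_j)$-amalgam norm of $f_j$ at radius $2^{k+1}r$, with convergence of the $k$-sum coming from $\alpha_j^*<p_j$ and $\beta_j>mn$. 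So that half of your proposal coincides with the paper.

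The genuine gap is in the local term $I_\gamma(\vec f^{\,0})$. You propose to control $\int_B|I_\gamma(\vec f^{\,0})|$ via ``the known weak-type endpoint bound for $I_\gamma$ at the exponent $q=1$.'' Two issues. First, even where such a bound exists, passing from the weak $L^{\tilde q}$ quasi-norm to the $L^1(B)$ average requires Kolmogorov's inequality, which you never invoke (this is fixable). Second, and more seriously, the Kenig--Stein endpoint bound $I_\gamma\colon L^{q_1}\times\cdots\times L^{q_m}\to L^{\tilde q,\infty}$ of \cite{KS1999} requires $1/\tilde q=\sum_j 1/q_j-\gamma/n>0$; with $q=1$ this forces $\gamma<n$, whereas the theorem allows the whole range $0<\gamma<mn$. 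For $m\geq 2$ and $\gamma\geq n$ there is no such target space, and your estimate of the local piece has nothing to rest on. The paper avoids quoting any boundedness theorem here: it writes $\gamma=\sum_j\gamma_j$ with $n/q_j<\gamma_j\leq n$, bounds the kernel by $\prod_j|x-z_j|^{-(n-\gamma_j)}$, and applies H\"older in each $z_j$, using $\big(\int_{2B}|x-z_j|^{-(n-\gamma_j)q_j'}dz_j\big)^{1/q_j'}\leq C|2B|^{\gamma_j/n-1/q_j}$; note that this choice of $\gamma_j$ is available exactly in the complementary regime $\gamma>n\sum_j 1/q_j$. So to cover all admissible $\gamma$ you need this direct kernel-splitting computation (or a case analysis combining it with your weak-type/Kolmogorov route), not the endpoint boundedness of $I_\gamma$ alone. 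A minor further point: the relation ``$1/q_j=1/p_j-\gamma/(mn)$'' you appeal to is not a hypothesis of this theorem (it belongs to Theorem \ref{FractionalAmalgam}); here the fractional scaling is absorbed entirely by the indices $\alpha^*$ and $\beta$, as your later bookkeeping in fact reflects.
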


\begin{proof}
Fix $y\in\mathbb R^n$ and $r>0.$ Sex $B=B(y,r)$ and let $x\in B(y,r)$, we decompose $f_j=f_j^0+f_j^\infty:=f_j\chi_{2B(y,r)}+f_j\chi_{2B(y,r)^c}$ for $j=1,\ldots,m$.
We write
\begin{align*}
\frac{1}{|B|^{1/q+1/p+1/\beta-1/\alpha*}}&\int_B\big|I_\gamma(\vec{f})(x)-\big(I_\gamma(\vec{f})\big)_B\big|dx\\
&\leq\frac{1}{|B|^{1/q+1/p+1/\beta-1/\alpha*}}\int_B\big|I_\gamma(\vec{f}^0)(x)-\big(I_\gamma(\vec{f}^0)\big)_B\big|dx\\
&\hspace*{12pt}+\sum_{(d_1,\ldots,d_m)\in\mathcal{I}}
\frac{1}{|B|^{1/q+1/p+1/\beta-1/\alpha*}}\int_B\big|I_\gamma(\vec{f}^D)(x)-\big(I_\gamma(\vec{f}^D)\big)_B\big|dx\\
&:=J_1(y,r)+J_2(y,r).
\end{align*}
First, we deal with $J_1(y,r)$. Let $\gamma=\sum_{j=1}^m1/{\gamma_j}$ with $n/{q_j}<\gamma_j<n$ for $j=1,\ldots,m$.
By the definition of $I_\gamma$, we get
\begin{align*}
J_1(y,r)&\leq\frac{C}{|B|^{1/q+1/p+1/\beta-1/\alpha*}}\int_B|I_\gamma(\vec{f}^0)(x)|dx\\
&\leq\frac{C}{|B|^{1/q+1/p+1/\beta-1/\alpha*}}\int_B\int_{(2B)^m}\frac{\prod_{j=1}^m|f_j(z_j)|}{|(x-z_1,\ldots,x-z_m)|^{mn-\gamma}}d\vec{z}dx\\
&\leq\frac{C}{|B|^{1/q+1/p+1/\beta-1/\alpha*}}\int_B\Bigg(\prod_{j=1}^m\bigg(\int_{2B}|f_j(z_j)|^{q_j}dz_j\bigg)^{1/{q_j}}
\bigg(\int_{B}\frac{1}{|x-z_j|^{(n-\gamma_j)q_j'}}dz_j\bigg)^{1/{q_j'}}\Bigg)dx.
\end{align*}
Since $n/{q_j}<\gamma_j<n,$ we have
$$
\bigg(\int_{B}\frac{1}{|x-z_j|^{(n-\gamma_j)q_j'}}dz_j\bigg)^{1/{q_j'}}\leq C|2B|^{\frac{\gamma_j}{n}-\frac{1}{q_j}}.
$$
Then the inequality above leads us to
\begin{align*}
J_1(y,r)
&\leq\frac{C}{|B|^{1/p+1/\beta-1/\alpha*}}\prod_{j=1}^m\|f_j\chi_{2B}\|_{L^{q_j}}|2B|^{\frac{\gamma_j}{n}-\frac{1}{q_j}}\\
&\leq C\prod_{j=1}^m\frac{1}{|2B|^{1/{q_j}+1/{p_j}+1/{\beta_j}-1/{\alpha_j}}}\|f_j\chi_{2B}\|_{L^{q_j}}.
\end{align*}
Taking $L^p(\mathbb{R}^n)$-norm on both side inequality above, we obtain
\begin{align*}
\bigg(\int_{\mathbb{R}^n}(J_1(y,r))^qdy\bigg)^{1/q}
&\leq C\prod_{j=1}^m
\Bigg(\int_{\mathbb{R}^n}\bigg(\frac{1}{|2B|^{1/{q_j}+1/{p_j}+1/{\beta_j}-1/{\alpha_j}}}\|f_j\chi_{2B}\|_{L^{q_j}}\bigg)^{p_j}dy\Bigg)^{1/p_j}\\
&\leq C\prod_{j=1}^m\|f_j\|_{(L^{q_j},L^{p_j})^{\alpha_j,\beta_j}}.
\end{align*}
Next we estimate the term $J_2(y,r)$. We set
$$
F:=\big|I_\gamma(\vec{f}^D)(x)-I_\gamma(\vec{f}^D)(x')\big|,
$$
where $(d_1,\ldots,d_m)\in\mathcal{I}$. We assume that  $d_1=\cdots=d_l=\infty, d_{l+1}=\cdots=d_m=0$ with $1\leq l\leq m.$ Since $x,x'\in B$ and some $z_j\in 2^kB\backslash 2^{k-1}B$ for $k\geq 1$, we have $|x-x'|\leq 2r$ and $|(x-z_1,\cdots,x-z_m)|\geq 2^{k-1}r.$ then
\begin{align*}
F&\leq\int_{\mathbb{R}^n\backslash 2B}\frac{|x-x'|}{|(x-z_1,\ldots,x-z_m)|^{mn+1-\gamma}}\prod_{j=1}^m|f_j(z_j)|dz_j\\
&\leq C|2B|^{1/n}\prod_{j=l+1}^m\int_{2B}|f_j(z_j)|dz_j
\prod_{j=1}^l\sum_{k=1}^\infty\frac{1}{|2^kB|^{(mn+1)/{ln}-\gamma/ml}}\int_{2^{k+1}B\backslash 2^kB}|f_j(z_j)|dz_j\\
&\leq C|2B|^{\frac{1}{n}}\prod_{j=1}^m\sum_{k=1}^\infty\frac{1}{|2^kB|^{(mn+1)/{mn}-\gamma/mn}}\|f_j\chi_{2^{k+1}B}\|_{L^{p_j}}|2^{k+1}B|^{1-\frac{1}{p_j}}\\
&\leq C\prod_{j=1}^m\sum_{k=1}^\infty\frac{|2B|^{1/nm}}{|2^kB|^{1/{mn}-\gamma/mn+1/\alpha_j-1/p_j-1/\beta_j}}
\bigg(\frac{1}{|2^{k+1}B|^{1/{q_j}+1/{p_j}+1/{\beta_j}-1/{\alpha_j}}}\|f_j\chi_{2^{k+1}B}\|_{L^{q_j}}\bigg)\\
&\leq C\frac{1}{|2B|^{1/{\alpha*}-1/p-1/\beta}}\prod_{j=1}^m\sum_{k=1}^\infty\frac{1}{2^{kn\rho_j}}
\bigg(\frac{1}{|2^{k+1}B|^{1/{q_j}+1/{p_j}+1/{\beta_j}-1/{\alpha_j}}}\|f_j\chi_{2^{k+1}B}\|_{L^{q_j}}\bigg),
\end{align*}
where $\rho_j={1/{mn}+1/{\alpha_j*}-1/p_j-1/\beta_j}.$
Therefore
$$
J_2(y,r)\leq C
\prod_{j=1}^m\sum_{k=1}^\infty\frac{1}{2^{kn\rho_j}}
\bigg(\frac{1}{|2^{k+1}B|^{1/{q_j}+1/{p_j}+1/{\beta_j}-1/{\alpha_j}}}\|f_j\chi_{2^{k+1}B}\|_{L^{q_j}}\bigg).
$$
Since $\alpha_j*<p_j$ and $\beta_j>mn$ imply $\rho_j>0,$ by H\"older's inequality we obtain
$$
\bigg(\int_{\mathbb{R}^n}(J_2(y,r))^pdy\bigg)^{1/p}\leq C\prod_{j=1}^m\|f_j\|_{(L^{p_j},L^{q_j})^{\alpha_j,\beta_j}}.
$$
Taking $L^p(\mathbb{R}^n)$-norm on both side inequality above, we obtain
\begin{align*}
\bigg(\int_{\mathbb{R}^n}(J_2(y,r))^pdy\bigg)^{1/p}\leq C\prod_{j=1}^m\|f_j\|_{(L^{q_j},L^{p_j})^{\alpha_j,\beta_j}},
\end{align*}
and complete our proof of Theorem \ref{FractionalAmalgam-Campanato}.
\end{proof}

\begin{remark}\label{One}
We should point out that even if $m=1$, our results Theorem \ref{FractionalAmalgam}, Theorem \ref{MultilinearCommutatorFractionalAmalgam} and Theorem \ref{FractionalAmalgam-Campanato} are also new.
\end{remark}


\textbf{Acknowledgement}: The authors would like to thank the referee for his/her helpful suggestions.


\bigskip

\noindent Songbai Wang

\medskip

\noindent College of Mathematics and Statistics, Hubei Normal University, Huangshi, 435002, People¡¯s Republic of China

\smallskip

\noindent{\it E-mail addresses}: \texttt{haiyansongbai@163.com} (S.-B. Wang)

\bigskip

\noindent Peng Li

\medskip

\noindent Graduate School, China Academy of Engineering Physics, Beijing,
100088, People's Republic of China\\
Institute of Applied Physics and Computational Mathematics, Beijing,
100088, People's Republic of China\\

\smallskip

\noindent{\it E-mail addresses}: \texttt{lipengmath@126.com} (P. Li)
\end{document}